\theoremstyle{definition}
\newtheorem{theorem}{Theorem}[section]
\newtheorem{proposition}[theorem]{Proposition}
\newtheorem{lemma}[theorem]{Lemma}
\newtheorem{definition}[theorem]{Definition}
\newtheorem{remark}[theorem]{Remark}
\newtheorem{conjecture}[theorem]{Conjecture}
\newtheorem{corollary}[theorem]{Corollary}
\renewcommand{\epsilon}{\varepsilon}    
\begin{document}

\title{Limit theorems for classical, freely and Boolean max-infinitely divisible distributions}
\author{\Large{Yuki Ueda}}
\date{}

\maketitle

\abstract{We investigate a Belinschi-Nica type semigroup for free and Boolean max-convolutions. We prove that this semigroup at time one connects limit theorems for freely and Boolean max-infinitely divisible distributions. Moreover, we also construct a max-analogue of Boolean-classical Bercovici-Pata bijection, establishing the equivalence of limit theorems for Boolean and classical max-infinitely divisible distributions.}
\\
\vspace{1mm}\\
\noindent
{\it Keywords}: max-convolution, max-stable (extreme value) distributions, max-infinitely divisible distributions, max-Belinschi-Nica semigroup, max-compound Poisson distributions,



\section{Introduction}
\label{intro}
Denote by $\mathcal{P}$ and $\mathcal{P}^+$ the sets of all probability measures on $\mathbb{R}$ and $[0,\infty)$, respectively. We write $\Delta$ as the set of all distribution functions on $\mathbb{R}$. Moreover, we define the set $\Delta_+:=\{F\in \Delta: F(x)=0 \text{ for } x<0\}$.

A probability measure $\mu$ on $\mathbb{R}$ is said to be {\it $\circ$-infinitely divisible} if for any $n\in\mathbb{N}$ there exists $\mu_n\in\mathcal{P}$ such that
\begin{align*}
\mu=\overbrace{\mu_n\circ\cdots \circ \mu_n}^{n \text{ times}},
\end{align*}
where $\circ\in \{\ast, \boxplus, \uplus\}$. The operation $\ast$ is {\it classical convolution}, $\boxplus$ is {\it free convolution} and $\uplus$ is {\it Boolean convolution}. Let ${\bf ID}(\circ)$ be the set of all $\circ$-infinitely divisible distributions on $\mathbb{R}$, where $\circ\in\{\ast,\boxplus,\uplus\}$. Speicher and Woroudi \cite[Theorem 3.6]{SW} proved that ${\bf ID}(\uplus)=\mathcal{P}$. 

In \cite[Theorem 6.3]{BP99}, Bercovici and Pata obtained innovated results for three types of infinitely divisible distributions. For any sequences $\{\mu_n\}_n$ in $\mathcal{P}$ and any sequences $\{k_n\}_n$ of positive integers with $k_1<k_2<\cdots$, the following conditions are equivalent.
\begin{enumerate}[(1)]
\item there exists $\mu\in {\bf ID}(\ast)$  such that $\overbrace{\mu_n \ast \cdots \ast \mu_n }^{k_n \text{ times}}\xrightarrow{w} \mu$ as $n\rightarrow \infty$;
\item there exists $\nu \in {\bf ID}(\boxplus)$  such that $\overbrace{\mu_n \boxplus \cdots \boxplus \mu_n}^{k_n \text{ times}} \xrightarrow{w} \nu$ as $n\rightarrow\infty$;
\item there exists $\lambda\in {\bf ID}(\uplus)=\mathcal{P}$  such that $\overbrace{\mu_n \uplus \cdots \uplus \mu_n}^{k_n \text{ times}} \xrightarrow{w} \lambda$ as $n\rightarrow\infty$.
\end{enumerate}

According to this result, we can construct the {\it  Bercovici-Pata bijection} $\mathbf{B}_{\bf \ast\mapsto \boxplus}: {\bf ID}(\ast)\rightarrow {\bf ID}(\boxplus)$, $\mu\mapsto \nu$ and the {\it Boolean-free Bercovici-Pata bijection} $\mathbf{B}_{\bf \uplus\mapsto \boxplus}: {\bf ID}(\uplus)\rightarrow {\bf ID}(\boxplus)$, $\lambda\mapsto \nu$. These bijections play an important role to understand limit theorems (containing the central limit theorem) in three probability theories. For example, $\mathbf{B}_{\bf \ast\mapsto \boxplus}$ maps the normal distribution to the Wigner's semicircle law and $\mathbf{B}_{\bf \uplus\mapsto \boxplus}$ maps the symmetric Bernoulli distribution to the Wigner's semicircle law. In general, it is known that $\mathbf{B}_{\bf \ast\mapsto \boxplus}$ maps stable laws to free stable laws, and $\mathbf{B}_{\bf \uplus \mapsto \boxplus}$ sends Boolean stable laws to free stable laws (see \cite{BP99}, \cite{BN08}). 



Belinschi and Nica \cite[Theorem 1.1]{BN08} defined the important map to understand free infinite divisibility for probability measures and the Boolean-free Bercovici-Pata bijection as follows:
\begin{align*}
\mathbf{B}_t (\mu):=(\mu^{\boxplus (1+t)})^{\uplus\frac{1}{1+t}}, \qquad t\ge0, \hspace{2mm} \mu\in\mathcal{P}.
\end{align*}
It is known that $\mathbf{B}_t\circ\mathbf{B}_s=\mathbf{B}_{t+s}$ for every $t,s\ge0$. Hence the family $\{\mathbf{B}_t\}_{t\ge0}$ is a semigroup with respect to the composition of maps and it is called {\it Belinschi-Nica semigroup}. In \cite[Corollary 1.3]{BN08}, Belinschi and Nica obtained that $\mathbf{B}_t(\mu) \in {\bf ID}(\boxplus)$ for every $t\ge 1$ and $\mu\in\mathcal{P}$. Moreover, the map $\mathbf{B}_t$ is a homomorphism on $\mathcal{P}^+$ with respect to free multiplicative convolution $\boxtimes$ (the details of $\boxtimes$ were given by \cite{BV93}), that is, $\mathbf{B}_t (\mu\boxtimes\nu)=\mathbf{B}_t (\mu)\boxtimes \mathbf{B}_t (\nu)$ for all $\mu,\nu\in\mathcal{P}^+$ (see \cite[Theorem 1.1]{BN08}).  Finally, Belinschi and Nica clarified the Boolean-free Bercovici-Pata bijection by using the above semigroup at time one.

\begin{theorem}\label{BN}
(See \cite[Theorem 1.2]{BN08}) The Boolean-free Bercovici-Pata bijection coincides with the map $\mathbf{B}_1$.
\end{theorem}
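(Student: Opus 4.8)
The plan is to reduce the statement to an identity between two analytic transforms and then verify it by a single substitution. The tools are the linearizers of the two convolutions. For $\mu\in\mathcal{P}$ write $G_\mu(z)=\int(z-x)^{-1}\,d\mu(x)$ for the Cauchy transform, $F_\mu=1/G_\mu$ for its reciprocal, $\varphi_\mu(z)=F_\mu^{-1}(z)-z$ for the Voiculescu transform (defined on a suitable truncated Stolz angle in $\C^+$, with $F_\mu^{-1}$ the compositional inverse), and $E_\mu(z)=z-F_\mu(z)$ for the Boolean cumulant transform. These satisfy $\varphi_{\mu\boxplus\nu}=\varphi_\mu+\varphi_\nu$ and $E_{\mu\uplus\nu}=E_\mu+E_\nu$, and hence the scaling rules $\varphi_{\mu^{\boxplus t}}=t\,\varphi_\mu$ (for $t\ge1$) and $E_{\mu^{\uplus t}}=t\,E_\mu$ (for $t\ge0$). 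Crucially, $\varphi_\nu$ (for $\nu\in{\bf ID}(\boxplus)$) and $E_\lambda$ (for $\lambda\in\mathcal{P}={\bf ID}(\uplus)$) admit the \emph{same} Nevanlinna-type representation $\gamma+\int\frac{1+xz}{z-x}\,d\sigma(x)$ parametrized by a pair $(\gamma,\sigma)$ with $\gamma\in\R$ and $\sigma$ a finite positive measure, and the Boolean-free Bercovici--Pata bijection is precisely the map that preserves this data. Thus $\mathbf{B}_{\uplus\mapsto\boxplus}$ is characterized by $\varphi_{\mathbf{B}_{\uplus\mapsto\boxplus}(\lambda)}=E_\lambda$ for every $\lambda\in\mathcal{P}$.

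Since $\mathbf{B}_1(\lambda)\in{\bf ID}(\boxplus)$ by the cited Corollary~1.3, it suffices to prove $\varphi_{\mathbf{B}_1(\lambda)}=E_\lambda$ and invoke injectivity of $\mu\mapsto\varphi_\mu$. I would compute $\varphi_{\mathbf{B}_1(\lambda)}$ directly from $\mathbf{B}_1(\lambda)=(\lambda^{\boxplus2})^{\uplus\frac12}$. Setting $\rho:=\lambda^{\boxplus2}$, the $\boxplus$-scaling rule gives $F_\rho^{-1}(z)=z+2\varphi_\lambda(z)=2F_\lambda^{-1}(z)-z$, while the Boolean half-power gives $F_{\mathbf{B}_1(\lambda)}(z)=\tfrac12 z+\tfrac12 F_\rho(z)$. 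To invert the latter I parametrize by $u=F_\rho(z)$, so that $z=F_\rho^{-1}(u)=2F_\lambda^{-1}(u)-u$, and substitute:
\begin{align*}
F_{\mathbf{B}_1(\lambda)}(z)=\tfrac12\bigl(2F_\lambda^{-1}(u)-u\bigr)+\tfrac12 u=F_\lambda^{-1}(u).
\end{align*}
Writing $w=F_{\mathbf{B}_1(\lambda)}(z)=F_\lambda^{-1}(u)$ yields $u=F_\lambda(w)$ and hence $F_{\mathbf{B}_1(\lambda)}^{-1}(w)=z=2F_\lambda^{-1}(u)-u=2w-F_\lambda(w)$. Therefore $\varphi_{\mathbf{B}_1(\lambda)}(w)=F_{\mathbf{B}_1(\lambda)}^{-1}(w)-w=w-F_\lambda(w)=E_\lambda(w)$, exactly the desired identity.

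The algebraic core is thus a one-line substitution, so I expect the main obstacle to be analytic bookkeeping rather than the identity itself. One must confirm that $\lambda^{\boxplus2}$ and $(\lambda^{\boxplus2})^{\uplus\frac12}$ are genuine probability measures (the former since $\boxplus$-powers exist for exponents $\ge1$, the latter since $\uplus$-powers exist for all nonnegative exponents), and, more delicately, that $F_\lambda^{-1}$, $F_\rho^{-1}$, and $F_{\mathbf{B}_1(\lambda)}^{-1}$ are simultaneously defined on a common truncated cone on which the substitution $u=F_\rho(z)$ is a bijection onto the relevant domain; these are the usual subtleties of the Voiculescu transform but must be stated so that the chain of inversions is legitimate. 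Finally, the equality of analytic functions on a sub-domain must be upgraded—by analytic continuation together with the uniqueness of the Nevanlinna representation—to the equality of the measures $\mathbf{B}_1(\lambda)=\mathbf{B}_{\uplus\mapsto\boxplus}(\lambda)$, which, holding for every $\lambda\in\mathcal{P}$, gives $\mathbf{B}_1=\mathbf{B}_{\uplus\mapsto\boxplus}$.
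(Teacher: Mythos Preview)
The paper does not prove this theorem at all: it is stated in the introduction purely as background and attributed to \cite[Theorem~1.2]{BN08}, with no proof given. So there is nothing to compare your argument against in this paper.

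That said, your argument is correct and is essentially the original Belinschi--Nica proof: the identity $\varphi_{\mathbf{B}_1(\lambda)}=E_\lambda$ is exactly what is established in \cite{BN08}, via the same substitution through $F_\rho^{-1}$ and $F_{\mathbf{B}_1(\lambda)}$ that you wrote out. Your remarks about the analytic bookkeeping (common domains for the compositional inverses, analytic continuation, uniqueness of the Nevanlinna representation) are the right caveats and are handled in the cited reference.
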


Max-probability (extreme value) theory is concerned with maxima of real random variables. Define $X\lor Y:=\max\{X,Y\}$ as the maximum of real random variables $X$ and $Y$. Denote by $F_X$ the cumulative distribution function of $X$, that is, $F_X(x):=\mathbb{P}(X\le x)$. If $X$ and $Y$ are independent random variables, then 
\begin{align*}
F_{X\lor Y}(x)&=\mathbb{P}(X\lor Y\le x)=\mathbb{P}(X\le x, Y\le x)\\
&=\mathbb{P}(X\le x)\mathbb{P}(Y\le x)=F_X(x)F_Y(x), \qquad x\in\mathbb{R}.
\end{align*}
According to the above equation, we define {\it max-convolution} $\lor$ of distribution functions $F$ and $G$ by setting $F\lor G:=FG$. The concept of max-infinite divisibility acts like classical convolution case, that is, $F$ is said to be {\it max-infinitely divisible} if for any $n\in\mathbb{N}$ there exists a distribution function $F_n$ such that $F=\overbrace{F_n\lor \cdots \lor F_n}^{n \text{ times}}=F_n^n$. However, every distribution function $F$ is max-infinitely divisible since the $n$-th root $F_n:=F^{1/n}$ is also distribution function. A non-degenerate distribution function $F$ is called {\it max-stable law} if 
\begin{align*}
F_{\frac{(X_1\lor \cdots \lor X_n)-b_n}{a_n}}\xrightarrow{w} F, \qquad n\rightarrow\infty,
\end{align*}
for some sequence $\{X_k\}_k$ of independent identically distributed $\mathbb{R}$-valued random variables, a sequence $\{a_k\}_k$  of positive real numbers and a sequence $\{b_k\}_k$ of real numbers.  Fisher and Tippett, Fr\'{e}chet and Gnedenko  proved that every max-stable law is an {\it extreme value distribution} which is one of following types:
\begin{itemize}
\item ${\text F_{\text I}^{\text{classical}}}(x)= \exp(-\exp(-x))$, \hspace{1mm} $x\in\mathbb{R}$: {\it Gumbel distribution};
\item ${\text F_{\text{II}}^{\text{classical}}}(x)= \begin{cases}
\exp(-x^{-\alpha}), & x>0 \\
0, & x\le 0,
\end{cases}$ \hspace{1mm} $\alpha>0$: {\it Fr\'{e}chet distribution};
\item ${\text F_{\text{III}}^{\text{classical}}}(x)=\begin{cases}
1, & x> 0\\
\exp(-(-x)^{\alpha}),& x\le 0,
\end{cases}$ \hspace{1mm} $\alpha>0$: {\it Weibull distribution}.
\end{itemize}
These distribution functions are the most important ones in extreme value theory, and they are used in max-value statistics.

In noncommutative probability theory, we can also establish extreme value theory as follows. For noncommutative real random variables (selfadjoint operators) $X$ and $Y$, we denote by $X\lor Y$ the maximum of $X$ and $Y$ with respect to the spectral order (which was introduced by \cite{O71}). In this paper, we call $X\lor Y$ the {\it spectral maximum} of $X$ and $Y$, while considering $X$ and $Y$ as noncommutative real random variables. Note the slight abuse of notation resulting from the dual use of the symbol $\lor$. By using the spectral order, we can establish the concepts of max-convolution, max-infinitely divisible distributions and max-stable laws (extreme value distributions) in free and Boolean settings (see Sections 3 and 4, for details).

In Section 5, we obtain the limit theorem for freely max-infinitely divisible distributions from Boolean max-limit theorem. Before our results, we give important concepts in our paper, namely, free max-convolution, Boolean max-convolution and max-Belinschi-Nica semigroup. We denote by $F \Box \hspace{-.97em}\lor G$ the (spectral) distribution function of the spectral maximum $X\lor Y$ of freely independent (noncommutative) real random variables $X$ and $Y$ whose (spectral) distribution functions are $F$ and $G$, respectively. Here, the (spectral) distribution functions of noncommutative random variables are defined in Section 2. The operation $\Box \hspace{-.72em}\lor$ is called {\it free max-convolution} (see Section 3.1, for details). We denote by $F \cup \hspace{-.85em}\lor G$ the (spectral) distribution function of the spectral maximum $X\lor Y$ of Boolean independent (noncommutative) positive random variables $X$ and $Y$ whose (spectral) distribution functions are $F$ and $G$, respectively. The operation $\cup \hspace{-.65em}\lor$ is called {\it Boolean max-convolution} (see Section 4.1, for details). A map $\mathbf{B}_t^M:\Delta\rightarrow \Delta$, is defined by $\mathbf{B}_t^M(F):=(F^{\Box \hspace{-.55em}\lor 1+t})^{\cup \hspace{-.55em}\lor \frac{1}{1+t}}$ for $t\ge 0$. The reader can know more precise definitions of $F^{\Box \hspace{-.55em}\lor t}$ and $F^{\cup \hspace{-.55em}\lor t}$ in Sections 3 and 4, respectively. In Section 5, we show that the family $\{\mathbf{B}_t^M\}_{t\ge0}$ makes a semigroup with respect to the composition of maps. We call the family $\{\mathbf{B}_t^M\}_{t\ge0}$ {\it max-Belinschi-Nica semigroup}. 
Then the following theorem is obtained.

\begin{theorem}\label{thm1}
Let $\{F_n\}_n$ be a sequence in $\Delta_+$ and $\{k_n\}_n$ a sequence of positive integers such that $k_1<k_2<\cdots$. If there exists $F\in \Delta_+$ such that $\overbrace{F_n \cup \hspace{-.88em}\lor\cdots \cup \hspace{-.88em}\lor F_n}^{k_n \text{ times}}\xrightarrow{w} F$, then $\overbrace{F_n\Box \hspace{-.95em}\lor \cdots \Box \hspace{-.95em}\lor F_n}^{k_n \text{ times}}\xrightarrow{w} \mathbf{B}_1^M(F)$ as $n\rightarrow\infty$.
\end{theorem}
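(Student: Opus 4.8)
\section{Proof proposal}

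The plan is to reduce the statement to an elementary computation with the transforms that linearize the two max-convolutions, mirroring the role of Theorem \ref{BN} in the additive setting, together with a standard passage from weak convergence to pointwise convergence at continuity points. From Sections 3 and 4 I may use that free max-convolution is linearized by $F\mapsto F-1$ and Boolean max-convolution by $F\mapsto \tfrac{1}{F}-1$; concretely, for an integer $k\ge 1$,
\begin{align*}
(F_n)^{\Box \hspace{-.55em}\lor k}=\max\{\,1-k(1-F_n),\,0\,\}, \qquad \frac{1}{(F_n)^{\cup \hspace{-.55em}\lor k}}-1=k\left(\frac{1}{F_n}-1\right).
\end{align*}
Applying the free power at $2$ followed by the Boolean power at $\tfrac{1}{2}$ and composing these closed forms yields the explicit formula $\mathbf{B}_1^M(F)=\max\{\,2-\tfrac{1}{F},\,0\,\}$, which will be the target of the convergence.

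First I would extract from the Boolean hypothesis the pointwise control of $F_n$ that drives everything. Writing $H_n:=(F_n)^{\cup \hspace{-.55em}\lor k_n}$, the Boolean identity gives $k_n\frac{1-F_n(x)}{F_n(x)}=\frac{1-H_n(x)}{H_n(x)}$. At every continuity point $x$ of $F$ with $F(x)>0$, the hypothesis $H_n\xrightarrow{w}F$ forces $\frac{1-H_n(x)}{H_n(x)}\to\frac{1-F(x)}{F(x)}\in[0,\infty)$; since $k_n\to\infty$, this can occur only if $F_n(x)\to 1$, and then
\begin{align*}
k_n(1-F_n(x))\longrightarrow \frac{1-F(x)}{F(x)}.
\end{align*}
This is the single piece of information inherited from the Boolean limit theorem.

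Next I would substitute this into the free closed form. At the same continuity points,
\begin{align*}
(F_n)^{\Box \hspace{-.55em}\lor k_n}(x)=\max\{1-k_n(1-F_n(x)),0\}\longrightarrow \max\!\left\{1-\frac{1-F(x)}{F(x)},\,0\right\}=\max\!\left\{2-\frac{1}{F(x)},\,0\right\},
\end{align*}
which is precisely $\mathbf{B}_1^M(F)(x)$. The points where $F(x)=0$ (there $k_n(1-F_n(x))\to\infty$, so both the free power and $\mathbf{B}_1^M(F)$ vanish) are checked separately, while the case $F(x)=1$ is already covered by the displayed limit.

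Finally I would upgrade this to weak convergence. Since $\mathbf{B}_1^M(F)=g\circ F$ for the continuous nondecreasing map $g(u)=\max\{2-1/u,0\}$ on $[0,1]$, the target is a genuine distribution function in $\Delta_+$ and every continuity point of $F$ is a continuity point of $\mathbf{B}_1^M(F)$; as the discontinuities of $F$ are at most countable, pointwise convergence on this dense set together with the monotonicity of the distribution functions $(F_n)^{\Box \hspace{-.55em}\lor k_n}$ delivers $(F_n)^{\Box \hspace{-.55em}\lor k_n}\xrightarrow{w}\mathbf{B}_1^M(F)$. I expect the main obstacle to lie in the third step: the truncation $\max\{\cdot,0\}$ in the free power and the distinguished levels $F(x)\in\{0,\tfrac{1}{2},1\}$ (where $\tfrac{1}{2}$ is the free truncation threshold) require a careful boundary analysis to ensure that the pointwise limit is correct at every continuity point of the target and that no mass is lost or created in passing from the Boolean to the free picture.
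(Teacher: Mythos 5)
Your proposal is correct and follows essentially the same route as the paper: both arguments use the Boolean hypothesis to extract $F_n(x)\to 1$ and $k_n(1-F_n(x))\to F(x)^{-1}-1$ at continuity points where $F(x)>0$, substitute this into the closed form $(k_nF_n-(k_n-1))_+$ of the free max-power, and treat the set $\{F=0\}$ by a separate estimate, the only cosmetic difference being that the paper identifies the limit as $\mathbf{B}_1^M(F)$ via the homomorphism property $\mathbf{B}_1^M(F_n)^{\Box\hspace{-.55em}\lor k_n}=\mathbf{B}_1^M(F_n^{\cup\hspace{-.50em}\lor k_n})$ while you use the explicit formula $\mathbf{B}_1^M(F)=\left(2-\tfrac{1}{F}\right)_+$. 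The boundary details you flag (showing $k_n(1-F_n(x))\to\infty$ when $F(x)=0$, and upgrading convergence on the continuity points of $F$ to weak convergence to the target) do go through exactly as you anticipate.
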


In Section 6, we prove the equivalence of limit theorems for classical and Boolean max-infinitely divisible distributions.

\begin{theorem}\label{thm1.3}
Consider a sequence $\{F_n\}_n$ in $\Delta_+$ and a sequence $\{k_n\}_n$ of positive integers with $k_1<k_2<\cdots$. The following conditions are equivalent.\\
(1) There exists $F\in\Delta_+$ such that $\overbrace{F_n\lor \cdots \lor F_n}^{k_n \text{ times}}\xrightarrow{w} F$ as $n\rightarrow \infty$;\\
(2) There exists $G\in\Delta_+$ such that $\overbrace{F_n\cup \hspace{-.88em}\lor \cdots \cup \hspace{-.88em}\lor F_n}^{k_n \text{ times}}\xrightarrow{w} G$ as $n\rightarrow\infty$.
\end{theorem}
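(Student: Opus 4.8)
The plan is to reduce both conditions to the convergence of a single family of nonnegative functions by passing to the transforms that linearize the two max-convolutions, and then to exploit that $k_1<k_2<\cdots$ forces $k_n\to\infty$, which makes the two operations asymptotically indistinguishable. Classical max-convolution is linearized by $u_F:=-\log F$, so that $F\lor G=FG$ corresponds to $u_F+u_G$ and the $k$-fold classical power to $k\,u_F$; by the computations of Section~4, Boolean max-convolution is linearized by $w_F:=\tfrac1F-1=\tfrac{1-F}{F}$, so that it corresponds to $w_F+w_G$ and its $k$-fold Boolean power to $k\,w_F$. The two transforms are tied together by the exact pointwise identity $w_F=e^{\,u_F}-1$, i.e.\ $F=e^{-u_F}=\tfrac{1}{1+w_F}$. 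Writing $a_n(x):=k_n u_{F_n}(x)=-k_n\log F_n(x)$ and $b_n(x):=k_n w_{F_n}(x)=k_n\tfrac{1-F_n(x)}{F_n(x)}$, the two $k_n$-fold powers evaluated at $x$ equal $e^{-a_n(x)}$ and $\tfrac{1}{1+b_n(x)}$ respectively, and the identity yields the master relation $b_n(x)=k_n\big(e^{a_n(x)/k_n}-1\big)$.

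The heart of the matter is then an elementary claim about scalar sequences: if $k_n\to\infty$ and $a_n,b_n\ge0$ satisfy $b_n=k_n(e^{a_n/k_n}-1)$, then $(a_n)$ converges in $[0,\infty]$ to a limit $\ell$ if and only if $(b_n)$ converges in $[0,\infty]$ to the same $\ell$. For finite $\ell$ one uses the estimate
$$0\le b_n-a_n=k_n\big(e^{a_n/k_n}-1-a_n/k_n\big)\le \frac{a_n^2}{2k_n}\,e^{a_n/k_n},$$
whose right-hand side tends to $0$ whenever $a_n$ stays bounded and $k_n\to\infty$, so a finite limit for one sequence forces the very same finite limit for the other. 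For $\ell=\infty$, the inequality $e^t-1\ge t$ gives $b_n\ge a_n$, whence $a_n\to\infty$ implies $b_n\to\infty$, while $\log(1+t)\ge\tfrac{t}{1+t}$ gives $a_n\ge\tfrac{k_n b_n}{k_n+b_n}$, whence $b_n\to\infty$ implies $a_n\to\infty$. I expect this uniform comparison across the three regimes ($\ell=0$, $\ell$ finite, $\ell=\infty$) to be the main technical point; it is exactly the infinitesimality mechanism underlying Bercovici--Pata type theorems, and once it is in place the rest is bookkeeping at continuity points.

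To deduce (1)$\Rightarrow$(2), assume the classical limit $F\in\Delta_+$ exists. At each continuity point $x$ of $F$ we have $F_n(x)^{k_n}\to F(x)$, hence $a_n(x)\to-\log F(x)=:\ell(x)\in[0,\infty]$, hence $b_n(x)\to\ell(x)$ by the claim, hence $\tfrac{1}{1+b_n(x)}\to\tfrac{1}{1+\ell(x)}$. Setting $G(x):=\tfrac{1}{1-\log F(x)}$ on $\{F>0\}$ and $G(x):=0$ on $\{F=0\}$, we have $G=\phi\circ F$ for the increasing homeomorphism $\phi(s)=(1-\log s)^{-1}$ of $[0,1]$ fixing $0$ and $1$; thus $G\in\Delta_+$ and $G$ shares all its continuity points with $F$, so the displayed convergence says precisely $\overbrace{F_n\cup \hspace{-.88em}\lor \cdots \cup \hspace{-.88em}\lor F_n}^{k_n \text{ times}}\xrightarrow{w}G$, which is (2).

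The converse (2)$\Rightarrow$(1) is identical with the roles of $u$ and $w$ interchanged: starting from the Boolean limit $G\in\Delta_+$, the same claim run backwards gives $a_n(x)\to\tfrac{1-G(x)}{G(x)}$ at continuity points, and $F(x):=e^{\,1-1/G(x)}$ (with $F(x):=0$ where $G(x)=0$) is the corresponding classical limit in (1). The two assignments $F\mapsto G$ and $G\mapsto F$ are mutually inverse increasing homeomorphisms, so they realize a max-analogue of the Boolean--classical Bercovici--Pata bijection, which is the structural reason behind the stated equivalence.
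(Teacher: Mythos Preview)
Your proof is correct and follows essentially the same approach as the paper: both arguments rest on the asymptotic equivalence of the linearizing transforms $-\log F_n$ and $F_n^{-1}-1$ when $F_n\to 1$ (forced by $k_n\to\infty$), and both identify the bijection $G=\mathcal{X}^{-1}(F)$, i.e.\ $G=(1-\log F)^{-1}$. Your packaging via a single scalar lemma on $[0,\infty]$ is slightly cleaner than the paper's case split between $\{F>0\}$ and $\{F=0\}$, but the underlying estimates are identical.
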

Thus we get the bijection $F\mapsto G$, where $F$ and $G$ are distributions in Theorem \ref{thm1.3}. It is called the {\it Boolean-classical max-Bercovici-Pata bijection}. Moreover, we find an explicit representation of the bijection.

\section{Spectral order and distribution functions of noncommutative random variables}
\label{sec:1}
Let $(\mathcal{M},\tau)$ be a tracial $W^*$-probability space, that is, $\mathcal{M}$ is a von Neumann algebra and $\tau$ is an ultraweakly continuous faithful tracial state. We may assume that $\mathcal{M}$ acts on a Hilbert space $\mathcal{H}$ (e.g. taking a Hilbert space $\mathcal{H}=L^2(\mathcal{M},\tau)$ with an inner product defined by $\langle X,Y\rangle_\mathcal{H}:=\tau(Y^*X)$ for all $X,Y\in\mathcal{M}$). We denote by $\mathcal{M}_{sa}$ and $\mathcal{P}(\mathcal{M})$ the set of all selfadjoint operators and projections in $\mathcal{M}$.

Firstly, we define an order $\le$ on $\mathcal{M}_{sa}$ as follows:
\begin{align*}
X\le Y \text{ for } X,Y \in \mathcal{M}_{sa} \Leftrightarrow Y-X \text{ is a positive operator in }\mathcal{M}.
\end{align*}
The order $\le$ is called the {\it operator order}. Note that $\mathcal{P}(\mathcal{M})$ is a complete lattice with respect to the operator order. For any $P,Q\in \mathcal{P}(\mathcal{M})$, we define the following projections on $\mathcal{H}$:
\begin{align*}
(P\lor Q)\mathcal{H}&:=cl(P\mathcal{H}+Q\mathcal{H});\\
(P\land Q)\mathcal{H}&:=P\mathcal{H}\cap Q\mathcal{H}.
\end{align*}
In fact, $P\lor Q$ and $P\land Q$ are in $\mathcal{P}(\mathcal{M})$. Moreover the projections $P\lor Q$ and $P\land Q$ are the maximum and the minimum of $P$ and $Q$ with respect to the operator order, respectively.

Next, we discuss the maximum and the minimum of selfadjoint operators in $\mathcal{M}$. However, the set $\mathcal{B}(\mathcal{H})_{sa}$ of all (bounded) selfadjoint operators on a Hilbert space $\mathcal{H}$ does not form a complete lattice with respect to the operator order (see \cite{K51}). From the reason, we need to find an another order on $\mathcal{M}_{sa}$ (in particular, $\mathcal{B}(\mathcal{H})_{sa}$) to consider the maximum of selfadjoint operators. In \cite{O71}, Olson introduced the spectral order as a new order on the set $\mathcal{B}(\mathcal{H})_{sa}$. After that, Ben Arous and Voiculescu in \cite{AV06} extended this order to $\mathcal{M}_{sa}$. For any $X\in\mathcal{M}_{sa}$ and any Borel set $T$ in $\mathbb{R}$, an operator $\mathbf{E}(X; T)\in \mathcal{P}(\mathcal{M})$ denotes the corresponding spectral projection. The {\it spectral order} on $\mathcal{M}_{sa}$ is defined by $X\prec Y \Leftrightarrow \mathbf{E}(X;[t,\infty))\le \mathbf{E}(Y;[t,\infty))$ for all $X,Y\in\mathcal{M}_{sa}$ and $t\in\mathbb{R}$.  It is known that the spectral order $\prec$ is extended to unbounded selfadjoint operators affiliated with a $W^*$-probability space $(\mathcal{M},\tau)$. In \cite{Ando} ($\mathcal{M}$: matrix algebra) and \cite{AV06}, for any $X,Y\in\mathcal{M}_{sa}$, they defined operators $X\lor Y$ and $X\land Y$ on $\mathcal{H}$ as follows:
\begin{align*}
\mathbf{E}(X\lor Y; (t,\infty)))&:=\mathbf{E}(X; (t,\infty))\lor \mathbf{E}(Y; (t,\infty)), \qquad t\in\mathbb{R};\\
\mathbf{E}(X\land Y; [t,\infty))&:=\mathbf{E}(X; [t,\infty))\land \mathbf{E}(Y; [t,\infty)),\qquad t\in\mathbb{R}.
\end{align*}
Note that $X\lor Y$ and $X\land Y$ are in $\mathcal{M}_{sa}$. By definition, $X\lor Y$ and $X\land Y$ are the maximum and the minimum of $X,Y\in\mathcal{M}_{sa}$ with respect to the spectral order. It follows from \cite{AV06} that
\begin{align*}
\mathbf{E}(X\lor Y; [t,\infty))&=\mathbf{E}(X; [t,\infty))\lor \mathbf{E}(Y; [t,\infty)), \qquad t\in\mathbb{R},
\end{align*}
since $\tau$ is tracial. Moreover, the definitions and properties of $\lor$ and $\land$ extend to unbounded selfadjoint operators affiliated with $(\mathcal{M},\tau)$.

For an (unbounded) selfadjoint operator $X$ affiliated with $(\mathcal{M},\tau)$, we define a function $F_X$ by setting $F_X(x):=\tau(\mathbf{E}(X;(-\infty,x]))$ for all $x\in\mathbb{R}$. It follows from the Borel functional calculus that $F_X$ is a distribution function of some $\mu_X\in\mathcal{P}$, that is, $\mu_X((-\infty,x])=F_X(x)$ for all $x\in\mathbb{R}$. In this paper, we call $F_X\in \Delta$ a {\it spectral distribution of $X$ with respect to $\tau$} while considering noncommutative probability theory.  

\section{Theory of freely max-infinitely divisible distributions}
\label{sec:2}

\subsection{Free max-convolution}
\label{sec:3}
If $X$ and $Y$ are freely independent real random variables (selfadjoint operators) affiliated with a tracial $W^*$-probability space $(\mathcal{M},\tau)$, then we have
\begin{align*}
F_{X\lor Y}(x)=(F_X(x)+F_Y(x)-1)_+, \qquad x\in\mathbb{R},
\end{align*}
where $(a)_+:=\max\{a,0\}$ for $a\in\mathbb{R}$ (see \cite[Corollary 3.3]{AV06} for details). Considering the fact, we can define free max-convolution as follows.

\begin{definition}
We define an operation $\Box \hspace{-.72em}\lor$ on $[0,1]$ by setting $u\Box \hspace{-.95em}\lor v:=\max\{u+v-1,0\}$. For $F,G\in \Delta$, we define a distribution function $F \Box \hspace{-.95em}\lor G$ as 
\begin{align*}
(F \Box \hspace{-.95em}\lor G )(x):= F(x)\Box \hspace{-.95em}\lor  G(x), \qquad x\in \mathbb{R}.
\end{align*}
The operation $\Box \hspace{-.73em}\lor: \Delta\times \Delta\rightarrow \Delta$, $(F,G)\mapsto F\Box \hspace{-.95em}\lor G$, is called {\it free max-convolution}. Note the slight abuse of notation resulting from the dual use of the symbol $\Box \hspace{-.70em}\lor$.
\end{definition}

By definition of free max-convolution, we obtain
\begin{align*}
F^{\Box \hspace{-.55em}\lor n}:=\overbrace{F \Box \hspace{-.95em}\lor\cdots \Box \hspace{-.95em}\lor F}^{n \text{ times}}=(nF-(n-1))_+, \qquad n\in\mathbb{N}, \hspace{2mm} F\in\Delta.
\end{align*}

For $F\in \Delta$, we define an interval $[\alpha(F),\omega(F)]\subset[-\infty,\infty]$ by setting
\begin{align*}
\alpha(F)&:=\sup \{x\in\mathbb{R}: F(x)=0\};\\
\omega(F)&:=\inf\{x\in \mathbb{R}: F(x)=1\}.
\end{align*}

For any positive integers $n\ge2$, we have $\alpha(F^{\Box \hspace{-.60em}\lor n})>-\infty$,
\begin{align*}
\alpha(F^{\Box \hspace{-.55em}\lor n})=\sup\{x\in\mathbb{R}: F(x)\le 1-1/n \},
\end{align*}
and $\lim_{n\rightarrow \infty} \alpha(F^{\Box \hspace{-.55em}\lor n})=\omega (F)$. If $F$ is continuous on $\mathbb{R}$, then there is $u\in\mathbb{R}$ such that $F(u)=1-1/n$, that is, $\alpha(F^{\Box \hspace{-.55em}\lor n})=\sup\{u: F(u)=1-1/n\}$ by applying the intermediate value theorem.

For $u\in [0,1]$ and $t\in \mathbb{R}$, we define $u^{\Box \hspace{-.55em}\lor t}$ as follows:
\begin{align*}
u^{\Box \hspace{-.55em}\lor t}:=(tu-(t-1))_+, \qquad t\in \mathbb{R}.
\end{align*}
In \cite[Lemma 6.9]{AV06}, Ben Arous and Voiculescu extended the $n$-fold free max-convolution $F^{\Box \hspace{-.55em}\lor n}$ of a distribution function $F\in\Delta$ to a function $F^{\Box \hspace{-.55em}\lor t}$ on $\mathbb{R}$ for $t\ge 1$ as follows:
\begin{align*}
F^{\Box \hspace{-.55em}\lor t}(x):=F(x)^{\Box \hspace{-.55em}\lor t}, \qquad x\in\mathbb{R}, \hspace{2mm} t\ge1.
\end{align*}
This is a distribution function with $\alpha(F^{\Box \hspace{-.55em}\lor t})=\sup\{x\in\mathbb{R}: F(x)\le 1- 1/t\}$ and the map $t\mapsto F^{\Box \hspace{-.55em} \lor t}$ is weakly continuous on $[1,\infty)$. Note that one does not get a distribution function when $t<1$. 

Let $\Lambda^\lor$ be a map on $[0,1]$ defined by
\begin{align*}
\Lambda^\lor(u):&=(1+\log u)_+, \qquad u\in (0,1],\\
\Lambda^\lor(0):&=0.
\end{align*}
It is known that $\Lambda^\lor$ is a homomorphsim between the semigroups $([0,1],\cdot)$ and $([0,1],\Box\hspace{-.70em}\lor)$, that is $\Lambda^\lor (uv)=\Lambda^\lor (u) \Box\hspace{-.95em}\lor \Lambda^\lor (v)$ for all $u,v\in [0,1]$. By definition, note that $\Lambda^\lor (u^t)=\Lambda^\lor (u)^{\Box\hspace{-.55em}\lor t}$ for all $u\in [0,1]$ and $t\ge 1$. Furthermore, it is easy to see that the map $\Lambda^\lor:[0,1]\rightarrow[0,1]$ is surjective.

Next, we consider $\Lambda^\lor$ as a map on $\Delta$. For $F\in \Delta$, we define
\begin{align*}
\Lambda^\lor (F)(x):=\Lambda^\lor (F(x)) \qquad x\in \mathbb{R}.
\end{align*}
Note the slight abuse of notation resulting from the dual use of the symbol $\Lambda^\lor$. By definition, we have $\Lambda^\lor (F)\in \Delta$ and the map $\Lambda^\lor :\Delta\rightarrow \Delta$, $F\mapsto \Lambda^\lor (F)$, is an weakly continuous. Moreover, we get the following relations:
\begin{align*}
\Lambda^\lor(FG)&=\Lambda^\lor(F) \Box\hspace{-.95em}\lor \Lambda^\lor(G),\\
\Lambda^\lor(F^t)&=\Lambda^\lor(F)^{\Box\hspace{-.55em}\lor t},
\end{align*}
for $F,G\in \Delta$ and $t\ge1$. 

\begin{remark}
As noted above, the map $\Lambda^\lor: [0,1]\rightarrow[0,1]$ is surjective. However, the map $\Lambda^\lor:\Delta\rightarrow \Delta$ is not surjective. Indeed, for $G\in \Delta$ with $\alpha(G)=-\infty$, we cannot find $F\in \Delta$ such that $G=\Lambda^\lor (F)$ since $\alpha(\Lambda^\lor (F))>-\infty$.
\end{remark}

\begin{proposition}\label{prop:F_t}
For any distribution function $F$ on $\mathbb{R}$, we have $F^{\Box \hspace{-.55em}\lor t} {\Box \hspace{-.70em}\lor} F^{\Box \hspace{-.55em}\lor s}=F^{\Box \hspace{-.55em}\lor (t+s)}$ and $(F^{\Box \hspace{-.55em}\lor t})^{\Box \hspace{-.55em}\lor s}=F^{\Box \hspace{-.55em}\lor ts}$ for any $t,s\ge 1$.
\end{proposition}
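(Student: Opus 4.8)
The plan is to reduce both identities to pointwise numerical statements on $[0,1]$ and then exploit the homomorphism $\Lambda^\lor$ just introduced. Since free max-convolution and the powers $F^{\Box \hspace{-.55em}\lor t}$ are defined pointwise, i.e. $(F \Box \hspace{-.95em}\lor G)(x) = F(x) \Box \hspace{-.95em}\lor G(x)$ and $F^{\Box \hspace{-.55em}\lor t}(x) = F(x)^{\Box \hspace{-.55em}\lor t}$, it suffices to establish, for every $u \in [0,1]$ and all $t,s \ge 1$, the two scalar identities
\begin{align*}
u^{\Box \hspace{-.55em}\lor t} \Box \hspace{-.95em}\lor u^{\Box \hspace{-.55em}\lor s} = u^{\Box \hspace{-.55em}\lor (t+s)}, \qquad (u^{\Box \hspace{-.55em}\lor t})^{\Box \hspace{-.55em}\lor s} = u^{\Box \hspace{-.55em}\lor ts}.
\end{align*}
Evaluating at each $x$ with $u = F(x)$ then yields the proposition for distribution functions.

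For the scalar identities I would invoke surjectivity of $\Lambda^\lor : [0,1] \to [0,1]$ to write $u = \Lambda^\lor(w)$ for some $w \in [0,1]$. The two properties recorded above, namely the homomorphism relation $\Lambda^\lor(w_1 w_2) = \Lambda^\lor(w_1) \Box \hspace{-.95em}\lor \Lambda^\lor(w_2)$ and the intertwining $\Lambda^\lor(w^t) = \Lambda^\lor(w)^{\Box \hspace{-.55em}\lor t}$ for $t \ge 1$, then transport the trivial identities of ordinary powers. Concretely, $u^{\Box \hspace{-.55em}\lor t} = \Lambda^\lor(w)^{\Box \hspace{-.55em}\lor t} = \Lambda^\lor(w^t)$, so
\begin{align*}
u^{\Box \hspace{-.55em}\lor t} \Box \hspace{-.95em}\lor u^{\Box \hspace{-.55em}\lor s} = \Lambda^\lor(w^t) \Box \hspace{-.95em}\lor \Lambda^\lor(w^s) = \Lambda^\lor(w^t w^s) = \Lambda^\lor(w^{t+s}) = u^{\Box \hspace{-.55em}\lor (t+s)},
\end{align*}
and similarly $(u^{\Box \hspace{-.55em}\lor t})^{\Box \hspace{-.55em}\lor s} = \Lambda^\lor(w^t)^{\Box \hspace{-.55em}\lor s} = \Lambda^\lor((w^t)^s) = \Lambda^\lor(w^{ts}) = u^{\Box \hspace{-.55em}\lor ts}$.

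The only point requiring care is the applicability of the intertwining relation in the second chain: it is used in the form $\Lambda^\lor(z^s) = \Lambda^\lor(z)^{\Box \hspace{-.55em}\lor s}$ with $z = w^t$, which requires $z \in [0,1]$ and $s \ge 1$. Both hold, since $w^t \in [0,1]$ whenever $w \in [0,1]$ and $t \ge 1$, and $s \ge 1$ is exactly the hypothesis; tracking these inequalities $t,s \ge 1$ is the main (and essentially the only) obstacle, everything else being formal once surjectivity of $\Lambda^\lor$ is in hand.

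As an alternative avoiding $\Lambda^\lor$, one could verify the two scalar identities directly by a case analysis on the vanishing of the positive parts $(tu-(t-1))_+$ and $(su-(s-1))_+$, splitting according to the position of $u$ relative to the thresholds $1-1/t$ and $1-1/s$, and checking that both sides agree with $((t+s)u-((t+s)-1))_+$, respectively $(stu-(st-1))_+$, on each region. This is elementary but less transparent, so I would favor the homomorphism argument.
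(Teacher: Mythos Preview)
Your proposal is correct and follows essentially the same route as the paper: reduce to the pointwise scalar problem, use surjectivity of $\Lambda^\lor$ on $[0,1]$ to write the value as $\Lambda^\lor(w)$, and transport the additive/multiplicative identities of ordinary real powers through the homomorphism and intertwining properties of $\Lambda^\lor$. The paper's argument differs only in notation (it writes $F(x)=\Lambda^\lor(u)$ directly rather than introducing an intermediate scalar variable), and it omits the explicit check that $w^t\in[0,1]$ and $s\ge 1$ which you helpfully spell out.
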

\begin{proof}
Fix $x\in\mathbb{R}$. Since $\Lambda^\lor$ is surjective on $[0,1]$, there is a value $u\in [0,1]$ with $F(x)=\Lambda^\lor (u)$. Then $F^{\Box \hspace{-.55em}\lor t}(x)=F(x)^{\Box \hspace{-.55em}\lor t}=\Lambda^\lor (u)^{\Box \hspace{-.55em}\lor t}=\Lambda^\lor(u^t)$ for all $t\ge1$.

We get
\begin{align*}
(F^{\Box \hspace{-.55em}\lor t} {\Box \hspace{-.70em}\lor} F^{\Box \hspace{-.55em}\lor s})(x)&=F^{\Box \hspace{-.55em}\lor t} (x) {\Box \hspace{-.70em}\lor} F^{\Box \hspace{-.55em}\lor s}(x)\\
&=\Lambda^\lor(u^t) {\Box \hspace{-.70em}\lor} \Lambda^\lor(u^s)\\
&=\Lambda^\lor(u^{t+s})=F^{\Box \hspace{-.55em}\lor t+s}(x). 
\end{align*}
Therefore $F^{\Box \hspace{-.55em}\lor t} {\Box \hspace{-.70em}\lor} F^{\Box \hspace{-.55em}\lor s}=F^{\Box \hspace{-.55em}\lor (t+s)}$ for any $t,s\ge 1$.

Next, we obtain
\begin{align*}
(F^{\Box \hspace{-.55em}\lor t})^{\Box \hspace{-.55em}\lor s}(x)&=F^{\Box \hspace{-.55em}\lor t}(x)^{\Box \hspace{-.55em}\lor s}\\
&=\Lambda^\lor(u^t)^{\Box \hspace{-.55em}\lor s}\\
&=\Lambda^\lor(u^{ts})=F^{\Box \hspace{-.55em}\lor ts}(x).
\end{align*}
Hence $(F^{\Box \hspace{-.55em}\lor t})^{\Box \hspace{-.55em}\lor s}=F^{\Box \hspace{-.55em}\lor ts}$ for any $t,s\ge 1$.
\end{proof}

\subsection{Freely max-infinitely divisible distributions}
\label{sec:4}

In this section, we introduce a concept of freely max-infinitely divisible distributions.

\begin{definition}
$F\in \Delta$ is said to be {\it freely max-infinitely divisible} if for each $n\in\mathbb{N}$, there exists $F_n\in \Delta$ such that $F=F_n^{\Box \hspace{-.55em}\lor n}$.
\end{definition}

We give an equivalent property of distribution functions to be freely max-infinitely divisible.

\begin{proposition}\label{prop:FMIDeq}
$F\in \Delta$ is freely max-infinitely divisible if and only if $\alpha(F)>-\infty$.
\end{proposition}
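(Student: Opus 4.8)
The plan is to prove the two implications separately. The forward implication is immediate from material already in the excerpt, and essentially all the work lies in the converse, which amounts to constructing $\Box \hspace{-.55em}\lor$-roots of $F$.

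For the direction ``freely max-infinitely divisible $\Rightarrow \alpha(F)>-\infty$'', I would simply apply the definition with $n=2$: there is $F_2\in\Delta$ with $F=F_2^{\Box \hspace{-.55em}\lor 2}$. Since the excerpt already records that $\alpha(G^{\Box \hspace{-.55em}\lor n})>-\infty$ for every $G\in\Delta$ and every integer $n\ge2$, taking $G=F_2$ and $n=2$ gives $\alpha(F)=\alpha(F_2^{\Box \hspace{-.55em}\lor 2})>-\infty$ at once.

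For the converse, set $a:=\alpha(F)>-\infty$ and recall that then $F\equiv0$ on $(-\infty,a)$ and $F^{\Box \hspace{-.55em}\lor n}=(nF-(n-1))_+$. Fixing $n\in\mathbb{N}$, I must exhibit $F_n\in\Delta$ with $(nF_n-(n-1))_+=F$. On $\{F>0\}$ the equation forces $F_n=(F+n-1)/n$, and the naive move is to use this formula for all $x$. The resulting function is non-decreasing, right-continuous and tends to $1$ at $+\infty$, but it tends to $(n-1)/n\neq0$ at $-\infty$, so it is not a distribution function; this is the \emph{one real obstacle}. The hypothesis removes it: on $(-\infty,a)$ the equation only demands $F_n\le(n-1)/n$, leaving room to bend $F_n$ down to $0$. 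Concretely I would set
\[
F_n(x):=\begin{cases}\dfrac{F(x)+n-1}{n}, & x\ge a,\\ \dfrac{n-1}{n}\,e^{x-a}, & x<a,\end{cases}
\]
and then check the routine points: $F_n$ is non-decreasing (it increases up to $(n-1)/n$ on $(-\infty,a)$, jumps up by $F(a)/n\ge0$ at $a$, and inherits monotonicity from $F$ on $[a,\infty)$), right-continuous, and has limits $0$ at $-\infty$ and $1$ at $+\infty$, hence $F_n\in\Delta$. Finally $(nF_n-(n-1))_+=F$ follows by cases: for $x\ge a$ the bracket equals $F(x)\ge0$, while for $x<a$ it equals $(n-1)(e^{x-a}-1)\le0$, whose positive part is $0=F(x)$. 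As $n$ was arbitrary, $F$ is freely max-infinitely divisible.

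The substance is entirely the single observation that the naive $n$-th root fails only at $-\infty$ and that $\alpha(F)>-\infty$ supplies exactly the interval on which $F$ vanishes and $F_n$ may be reshaped; the rest is verification that the reshaped function lies in $\Delta$. The same argument can be packaged through the homomorphism $\Lambda^\lor$: the Remark gives $\operatorname{image}(\Lambda^\lor)\subseteq\{F\in\Delta:\alpha(F)>-\infty\}$, the construction above (with $e^{F(x)-1}$ in place of $(F(x)+n-1)/n$) gives the reverse inclusion, and then writing $F=\Lambda^\lor(G)$ and using $\Lambda^\lor(G^{1/n})^{\Box \hspace{-.55em}\lor n}=\Lambda^\lor((G^{1/n})^n)=F$ displays the roots $F_n=\Lambda^\lor(G^{1/n})$. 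This reframes the proposition as the statement that the freely max-infinitely divisible laws are precisely the image of $\Lambda^\lor$.
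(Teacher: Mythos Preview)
Your proof is correct and follows essentially the same approach as the paper: the forward direction is identical, and for the converse the paper also defines $F_n$ as $(F+n-1)/n$ on $[\alpha(F),\infty)$, but simply sets $F_n\equiv 0$ on $(-\infty,\alpha(F))$ rather than using your exponential tail---your choice works but is more than needed. Your closing remark about repackaging via $\Lambda^\lor$ is a nice additional observation that the paper does not spell out in this proof.
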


\begin{proof}
If $F$ is freely max-infinitely divisible, then there is a distribution function $F_n$ such that $F=F_n^{\Box \hspace{-.55em}\lor n}$ for each $n\in\mathbb{N}$. Since $\alpha(F_n^{\Box \hspace{-.55em}\lor n})>-\infty$ for every $n\ge 2$, we have that $\alpha(F)>-\infty$. Conversely, if $\alpha(F)>-\infty$, then for each $n\in\mathbb{N}$, we define 
\begin{align*}
F_n(x):=\begin{cases}
\frac{1}{n}F(x)-\left(\frac{1}{n}-1\right), & x\ge \alpha(F)\\
0, & x<\alpha(F).
\end{cases}
\end{align*}
For all $x\ge \alpha(F)$, we have
\begin{align*}
F_n^{\Box \hspace{-.55em}\lor n}(x)=\left(n\left(\frac{1}{n}F(x)-\left(\frac{1}{n}-1\right)\right)-(n-1)\right)_+=F(x).
\end{align*}
Furthermore, $F_n^{\Box \hspace{-.55em}\lor n}(x)=0$ for all $x<\alpha(F)$. Hence $F$ is freely max-infinitely divisible.
\end{proof}

Note that compactly supported distributions are freely max-infinitely divisible while the Gaussian distribution isn't. Moreover, $\Lambda^{\lor}(F)$ is freely max-infinitely divisible for all distribution functions $F$ on $\mathbb{R}$. 



To end this section, we give a limit theorem for every distribution function on $\mathbb{R}$ with respect to free max-convolution.

\begin{proposition}\label{thm:FMID characterization}
Let $F$ be a distribution function on $\mathbb{R}$. Then there exists a sequence $\{F_n\}_n$ of distribution functions on $\mathbb{R}$ such that $\overbrace{F_n \Box \hspace{-.95em}\lor\cdots \Box \hspace{-.95em}\lor F_n}^{n \text{ times}}\xrightarrow{w} F$ as $n\rightarrow\infty$.
\end{proposition}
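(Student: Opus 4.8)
The plan is to produce an explicit approximating sequence $\{F_n\}_n$ whose $n$-fold free max-convolution is computable in closed form via the identity $F_n^{\Box \hspace{-.55em}\lor n}=(nF_n-(n-1))_+$ recorded above. The right construction is suggested by the proof of Proposition~\ref{prop:FMIDeq}: when $\alpha(F)>-\infty$ the choice $F_n(x)=\frac1n F(x)-\left(\frac1n-1\right)$ for $x\ge\alpha(F)$, and $0$ otherwise, already gives $F_n^{\Box \hspace{-.55em}\lor n}=F$ identically. The only obstruction to reusing this for an arbitrary $F$ is the left tail: when $\alpha(F)=-\infty$ the affine profile $x\mapsto 1-\frac{1-F(x)}{n}$ tends to $1-\frac1n\neq0$ as $x\to-\infty$, so it fails to be a distribution function. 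I would remove this obstruction by truncating the profile at a cutoff that recedes to $-\infty$ with $n$.

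Concretely, I would fix any sequence $a_n$ with $a_n\to-\infty$ (for instance $a_n=-n$) and set
\begin{align*}
F_n(x):=\begin{cases}
1-\dfrac{1-F(x)}{n}, & x\ge a_n,\\[1mm]
0, & x<a_n.
\end{cases}
\end{align*}
The first step is to check that each $F_n$ lies in $\Delta$: it is nondecreasing because $F$ is, it is right-continuous (with a single permissible upward jump of size at least $1-\frac1n$ at $a_n$), it vanishes for $x<a_n$, and $F_n(x)\to1$ as $x\to+\infty$.

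The second step is a one-line computation from $F_n^{\Box \hspace{-.55em}\lor n}=(nF_n-(n-1))_+$. For $x\ge a_n$ one has $nF_n(x)-(n-1)=F(x)\ge0$, and for $x<a_n$ one has $nF_n(x)-(n-1)=-(n-1)<0$; hence
\begin{align*}
F_n^{\Box \hspace{-.55em}\lor n}(x)=\begin{cases} F(x), & x\ge a_n,\\ 0, & x<a_n.\end{cases}
\end{align*}
Since $a_n\to-\infty$, every fixed $x$ satisfies $x\ge a_n$ for all large $n$, so $F_n^{\Box \hspace{-.55em}\lor n}(x)\to F(x)$ at every $x\in\mathbb{R}$. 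Pointwise convergence at all points is a fortiori convergence at the continuity points of $F$, which is precisely the weak convergence $F_n^{\Box \hspace{-.55em}\lor n}\xrightarrow{w}F$ claimed.

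There is no deep analytic difficulty; the entire content sits in the treatment of the left tail. The step deserving the most care is confirming that the moving cutoff does no harm in the limit---namely that any $a_n\to-\infty$ simultaneously keeps each $F_n$ a distribution function and zeroes out only a left tail that vanishes asymptotically. This is exactly where the contrast with Proposition~\ref{prop:FMIDeq} becomes visible: for $\alpha(F)>-\infty$ the cutoff may be frozen at $\alpha(F)$ and the convergence collapses to an exact identity, whereas for $\alpha(F)=-\infty$ a genuinely moving cutoff is forced, mirroring the fact that such $F$ are not themselves freely max-infinitely divisible.
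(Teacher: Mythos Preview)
Your proposal is correct and essentially identical to the paper's proof: the paper also takes $F_n(x)=\frac{1}{n}F(x)-\left(\frac{1}{n}-1\right)$ (which equals your $1-\frac{1-F(x)}{n}$) for $x\ge -n$ and $0$ below, then computes $F_n^{\Box \hspace{-.55em}\lor n}$ explicitly and lets the cutoff run to $-\infty$. The only cosmetic difference is that the paper first disposes of the case $\alpha(F)>-\infty$ by citing Proposition~\ref{prop:FMIDeq}, whereas your single construction with a moving cutoff $a_n\to-\infty$ handles both cases at once.
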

\begin{proof}

If $F$ is freely max-infinitely divisible, then there is nothing to prove. If $F$ is not freely max-infinitely divisible, then we have $\alpha(F)=-\infty$ by Proposition \ref{prop:FMIDeq}. For each $n\in\mathbb{N}$, we define a distribution function $F_n$ by setting
\begin{align*}
F_n(x):=\begin{cases}
\frac{1}{n} F(x)- \left(\frac{1}{n}-1 \right), & x\ge -n\\
0, & x<-n.
\end{cases}
\end{align*}
Then 
\begin{align*}
\overbrace{F_n \Box \hspace{-.95em}\lor\cdots \Box \hspace{-.95em}\lor F_n}^{n \text{ times}}&=\begin{cases}
F(x), &x\ge -n\\
0, &x<-n
\end{cases}\\
&\xrightarrow{n\rightarrow\infty} F(x),
\end{align*}
for all continuous points $x$ of $F$.
\end{proof}

\begin{remark}
In \cite{BP00}, Bercovici and Pata obtained a free analogue of Khintchine's theorem: Consider a sequence $\{k_n\}_n$ of positive integers with $k_1<k_2<\cdots$, an infinitesimal array $\{\mu_{nk}\}_{1\le k \le k_n, n\ge 1}$ in $\mathcal{P}$ and a sequence $\{a_n\}_n$ of real numbers. If $\mu_{n1}\boxplus \cdots \boxplus \mu_{nk_n}\boxplus \delta_{a_n}$ weakly converges to some probability measure $\mu$ on $\mathbb{R}$, then $\mu\in \mathbf{ID}(\boxplus)$. However, the Khintchine type theorem does not hold in the max-case by Proposition \ref{thm:FMID characterization}.
\end{remark}

\subsection{Free max-stable laws}
\label{sec:5}

In this section, we study spectral distribution functions of
\begin{align*}
\frac{(X_1\lor \cdots \lor X_n)-b_n}{a_n} ,\qquad n\in\mathbb{N},
\end{align*}
for some sequences $\{X_n\}_n$ of freely independent identically distributed real random variables, $\{a_n\}_n\subset (0,\infty)$ and $\{b_n\}_n\subset \mathbb{R}$, where the symbol $\lor$ is the spectral maximum. If $G$ is a spectral distribution function of $X_1$, then one of real random variable $(X_1\lor \cdots \lor X_n-b_n)/a_n$ is equal to $G^{\Box \hspace{-.55em}\lor n}(a_n\cdot +b_n)$ for each $n\in\mathbb{N}$.

\begin{definition}
A non-degenerate distribution function $G$ on $\mathbb{R}$ is said to be {\it freely max-stable} if for each $n\in\mathbb{N}$ there exist $a_n>0$ and $b_n\in\mathbb{R}$ such that
\begin{align*}
G^{\Box \hspace{-.55em}\lor n}(a_n\cdot+b_n)=G(\cdot).
\end{align*}
\end{definition}

Next we define freely max-domain of attraction.

\begin{definition}
A distribution function $F$ is said to be in the {\it free max-domain of attraction} of a distribution function $G$ if there exist $a_n>0$ and $b_n\in\mathbb{R}$ such that
\begin{align*}
F^{\Box \hspace{-.55em}\lor n}(a_n\cdot+b_n)\xrightarrow{w} G(\cdot),
\end{align*}
as $n\rightarrow \infty$. In this case, we write $F\in {\text{Dom}_{\Box \hspace{-.55em}\lor}}(G)$.
\end{definition}

In \cite[Theorem 6.5]{AV06}, Ben Arous and Voiculescu obtain the following equivalent properties of freely max-stable laws.

\begin{proposition}
The following conditions are equivalent.\\
(1) $G$ is freely max-stable; \\
(2) $G \in {\text{Dom}_{\Box \hspace{-.55em}\lor}}(G)$; \\
(3) ${\text{Dom}_{\Box \hspace{-.55em}\lor}}(G)\neq \emptyset$.
\end{proposition}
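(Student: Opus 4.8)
The plan is to establish the cycle $(1)\Rightarrow(2)\Rightarrow(3)\Rightarrow(1)$, where throughout $G$ is a non-degenerate distribution function, as required by the definition of freely max-stable. The first two implications are immediate. If $G$ is freely max-stable then the exact identity $G^{\Box \hspace{-.55em}\lor n}(a_n\cdot+b_n)=G$ is a fortiori a weak convergence, so $G\in\text{Dom}_{\Box \hspace{-.55em}\lor}(G)$; and $G\in\text{Dom}_{\Box \hspace{-.55em}\lor}(G)$ trivially gives $\text{Dom}_{\Box \hspace{-.55em}\lor}(G)\ne\emptyset$. All the content lies in $(3)\Rightarrow(1)$.

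For $(3)\Rightarrow(1)$, pick $F\in\text{Dom}_{\Box \hspace{-.55em}\lor}(G)$, so that $F^{\Box \hspace{-.55em}\lor n}(a_n\cdot+b_n)\xrightarrow{w}G$, and fix $m\in\mathbb{N}$. First I would feed the convergence through one further free max-power. Since $\cdot^{\Box \hspace{-.55em}\lor m}$ acts pointwise by the continuous map $u\mapsto(mu-(m-1))_+$ on $[0,1]$, it is weakly continuous; combined with $(F^{\Box \hspace{-.55em}\lor n})^{\Box \hspace{-.55em}\lor m}=F^{\Box \hspace{-.55em}\lor nm}$ (Proposition \ref{prop:F_t}) and the pointwise identity $\big(F^{\Box \hspace{-.55em}\lor n}(a_n\cdot+b_n)\big)^{\Box \hspace{-.55em}\lor m}=F^{\Box \hspace{-.55em}\lor nm}(a_n\cdot+b_n)$, this yields $F^{\Box \hspace{-.55em}\lor nm}(a_n\cdot+b_n)\xrightarrow{w}G^{\Box \hspace{-.55em}\lor m}$. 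On the other hand, reading the original convergence along the subsequence $\{nm\}_n$ gives $F^{\Box \hspace{-.55em}\lor nm}(a_{nm}\cdot+b_{nm})\xrightarrow{w}G$. Thus the single sequence $H_n:=F^{\Box \hspace{-.55em}\lor nm}$ converges weakly, under two affine normalizations, to the two limits $G^{\Box \hspace{-.55em}\lor m}$ and $G$.

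I would then invoke the classical convergence of types theorem: if $H_n(a_n\cdot+b_n)\xrightarrow{w}U$ and $H_n(\alpha_n\cdot+\beta_n)\xrightarrow{w}V$ with $U,V$ non-degenerate, then $U$ and $V$ are of the same type, i.e. $V(x)=U(Ax+B)$ for some $A>0$ and $B\in\mathbb{R}$. Applied with $U=G^{\Box \hspace{-.55em}\lor m}$ and $V=G$, it produces $a_m>0$ and $b_m\in\mathbb{R}$ with $G^{\Box \hspace{-.55em}\lor m}(a_m\cdot+b_m)=G$. As $m\in\mathbb{N}$ was arbitrary, $G$ is freely max-stable, closing the cycle.

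The step I expect to be the real obstacle is verifying the hypothesis of the convergence of types theorem that $G^{\Box \hspace{-.55em}\lor m}$ be non-degenerate. This is automatic in the classical max case, where $G^m(x)=G(x)^m$ inherits non-degeneracy pointwise, but it genuinely fails to be automatic for free max-convolution: $G^{\Box \hspace{-.55em}\lor m}=(mG-(m-1))_+$ truncates every value of $G$ below $1-1/m$, and collapses to a point mass whenever $G$ jumps from a value $\le 1-1/m$ directly up to $1$. To exclude this I would rewrite the defining convergence as the tail statement $n(1-F)(a_n\cdot+b_n)\to 1-G$ on $\{G>0\}$, using $F^{\Box \hspace{-.55em}\lor n}=\max\{1-n(1-F),0\}$, and appeal to the Fisher--Tippett--Gnedenko analysis of such tail limits: the only admissible non-degenerate limit functions $1-G$ are the three extremal types, each continuous wherever it takes values in $(0,1)$. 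Hence the attracting $G$ has no jump up to $1$, so $G^{\Box \hspace{-.55em}\lor m}$ is non-degenerate and the argument closes. Equivalently, one may transport the whole problem to classical extreme value theory through the homomorphism $\Lambda^\lor$, under which the Gumbel, Fr\'echet and Weibull laws map exactly to the three free max-stable laws $(1-e^{-x})_+$, $(1-x^{-\alpha})_+$ and $(1-(-x)^\alpha)_+$ (suitably supported), and read off the required non-degeneracy from the classical classification.
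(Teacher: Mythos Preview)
The paper does not supply its own proof here; the result is quoted from \cite[Theorem~6.5]{AV06}. Your convergence-of-types cycle is the standard argument and matches how Ben Arous--Voiculescu proceed, and you have correctly put your finger on the one genuine issue: unlike the classical case (where $u\mapsto u^m$ is a bijection of $[0,1]$), non-degeneracy of $G^{\Box\hspace{-.55em}\lor m}$ is not automatic and must be earned.

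Your first resolution, via the tail statement $n(1-F)(a_n\cdot+b_n)\to 1-G$ on $\{G>0\}$, is the right idea, and this is essentially how \cite{AV06} links the free and classical max-domains of attraction (their Theorems~6.11--6.13). One caution: you cannot literally apply the classical Fisher--Tippett--Gnedenko theorem to the putative limit $e^{-(1-G)}$, because on $\{G>0\}$ this function is bounded below by $e^{-1}$ and need not extend to a genuine distribution function on $\mathbb R$; what is actually used is the underlying regular-variation analysis of $\bar F$ near its right endpoint, which forces $1-G$ to be of exponential or power type on $\{0<G<1\}$, hence continuous there. Your second route through $\Lambda^\lor$ is shakier as stated: $\Lambda^\lor$ maps classical to free and is not surjective on $\Delta$ (Remark~3.2 in the paper), so an arbitrary $F$ or $G$ cannot simply be pulled back; and inferring continuity of $G$ from the classification of freely max-stable laws (the theorem immediately following this proposition) would be circular, since that classification presupposes condition~$(1)$.
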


This means that the freely max-stable laws are weak limits of spectral distribution functions of $(X_1\lor \cdots \lor X_n-b_n)/a_n$, as $n\rightarrow\infty$ for some sequence $\{X_n\}_n$ of freely independent identically distributed real random variables, $\{a_n\}_n\subset (0,\infty)$ and $\{b_n\}_n\subset \mathbb{R}$. In \cite[Theorem 6.8]{AV06}, the freely max-stable laws are characterized by free extreme value distributions.

\begin{theorem}
A non-degenerate distribution function $G$ is freely max-stable if and only if $G$ is of {\it free extreme value type}, that is, there exist $a>0$ and $b\in\mathbb{R}$ such that $G(ax+b)$ is one of the following distributions (called {\it free extreme value distributions}):
\begin{itemize}
\item ${\text F^{\text{free}}_{\text I}}(x):=(1-e^{-x})_+$, \hspace{1mm} $x\in\mathbb{R}$: {\it exponential distribution};
\item ${\text F^{\text{free}}_{\text{II}}}(x):=\begin{cases}
(1-x^{-\alpha})_+, & x>0\\
0, & x\le 0,
\end{cases}$ \hspace{1mm} $\alpha>0$: {\it Pareto distribution};
\item ${\text F^{\text{free}}_{\text{III}}}(x):=
\begin{cases}
1, &x>0\\
(1-|x|^\alpha)_+, &x\le 0,
\end{cases}$ \hspace{1mm} $\alpha>0$: {\it Beta law}.
\end{itemize}
\end{theorem}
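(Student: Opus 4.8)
The plan is to reduce the max-stability relation to a classical functional equation and then invoke the Fisher--Tippett--Gnedenko classification. Throughout I would write $\overline{G}:=1-G$ for the (spectral) tail of $G$ and use the identity $G^{\Box \hspace{-.55em}\lor n}=(nG-(n-1))_+=(1-n\,\overline G)_+$, which is immediate from the definition of free max-convolution.

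First I would unwind the stability relation $G^{\Box \hspace{-.55em}\lor n}(a_n x+b_n)=G(x)$. On the set where $0<G(x)<1$ the right-hand side lies in $(0,1)$, so the outer truncation on $G^{\Box \hspace{-.55em}\lor n}$ is inactive and the relation reduces, for every $n$ and every such $x$, to
\begin{align*}
\overline G(a_n x+b_n)=\frac{1}{n}\,\overline G(x).
\end{align*}
This is exactly the functional equation satisfied by the exponent $\phi:=-\log H$ of a classical max-stable distribution function $H$, since $H^{n}(a_n x+b_n)=H(x)$ is equivalent to $\phi(a_n x+b_n)=\tfrac1n\phi(x)$. Thus $\overline G$ and $\phi$ solve the same equation; the only difference is that $\phi$ ranges over $[0,\infty)$ whereas $\overline G$ ranges over $[0,1]$.

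Next I would classify the monotone solutions of $\psi(a_n x+b_n)=\tfrac1n\psi(x)$. This classification rests only on the functional equation together with the monotonicity of $\psi$, and not on its range, so the classical analysis applies verbatim: up to an affine change of variable $x\mapsto(x-b)/a$ the tail $\overline G$ must be one of $e^{-x}$, $x^{-\alpha}\ (x>0)$, or $(-x)^{\alpha}\ (x<0)$. Imposing the constraint $\overline G\in[0,1]$ then fixes the support in each case and yields $G(ax+b)\in\{F^{\text{free}}_{\text{I}},F^{\text{free}}_{\text{II}},F^{\text{free}}_{\text{III}}\}$; for instance $\overline G(x)=e^{-x}$ forces $x\ge0$ and hence $G=1-e^{-x}=F^{\text{free}}_{\text{I}}$. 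The main obstacle is precisely this classification: extracting the three types requires the convergence-of-types (Khinchin) theorem and a regular-variation argument to pin down the one-parameter group $\{x\mapsto a_n x+b_n\}$ (so that $a_n\sim n^{c}$ with the corresponding pattern for $b_n$), and to upgrade the integer relation to its continuous counterpart $G^{\Box \hspace{-.55em}\lor t}(a_t x+b_t)=G(x)$ for $t\ge1$, which is supported by the semigroup property in Proposition \ref{prop:F_t}.

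For the converse I would verify the stability relation directly for each free extreme value distribution using $F^{\Box \hspace{-.55em}\lor n}=(1-n\,\overline F)_+$. For $F^{\text{free}}_{\text{I}}$ one gets $(1-ne^{-x})_+=(1-e^{-(x-\log n)})_+$, so $a_n=1,\ b_n=\log n$; for $F^{\text{free}}_{\text{II}}$ one gets $(1-nx^{-\alpha})_+=F^{\text{free}}_{\text{II}}(n^{-1/\alpha}x)$, so $a_n=n^{1/\alpha},\ b_n=0$; and for $F^{\text{free}}_{\text{III}}$ the same computation with $(-x)^\alpha$ gives $a_n=n^{-1/\alpha},\ b_n=0$. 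Each identity exhibits the required norming constants, so every free extreme value distribution is freely max-stable. As a conceptual remark, these three laws are exactly the images under the homomorphism $\Lambda^\lor$ of the classical Gumbel, Fr\'echet and Weibull distributions, which both explains why the two families obey the same functional equation and furnishes an alternative route to the converse.
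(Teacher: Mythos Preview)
The paper does not give its own proof of this theorem; it is quoted from Ben Arous and Voiculescu \cite[Theorem~6.8]{AV06}. Your proof sketch is correct and is essentially the argument used there: the free max-stability relation, rewritten on the tail $\overline G=1-G$, becomes the functional equation $\overline G(a_nx+b_n)=n^{-1}\overline G(x)$, which is exactly the equation satisfied by $-\log H$ for a classical max-stable $H$; the Fisher--Tippett--Gnedenko classification of monotone solutions then applies verbatim, and imposing $\overline G\le 1$ fixes the supports and yields the three free extreme value types. Your direct verification of the converse is correct, and your closing remark that $F^{\text{free}}_n=\Lambda^\lor(F^{\text{classical}}_n)$ is precisely the observation the paper records immediately after the theorem.
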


\cite[Theorem 6.11-Theorem 6.13]{AV06} states that the classical max-domains of attraction of the extreme value (Gumbel, Fr\'{e}chet and Weibull) distributions are corresponding to the free max-domains of attraction of the free extreme value (exponential, Pareto and Beta) distributions. By using the map $\Lambda^{\lor}$, Ben Arous and Voiculescu get a relation between classical extreme value distributions and freely extreme value distributions, that is, $\Lambda^\lor({\text F}_{n}^{\text{classical}})={\text F}_n^{\text{free}}$ for $n=$ I, II, III in \cite[p.2052]{AV06}. 

In \cite[Theorem 3.3]{BD}, Benaych-Georges and Cabanal-Duvillard obtained a random matrix model whose empirical spectral law weakly converges almost surely to a probability measure constructed by $\Lambda^{\lor}$ as its matrix size goes to infinite.

\subsection{Free regular max-infinitely divisible distributions}
\label{sec:6}

Recall that $\Delta_+$ is the set of all distribution functions on $[0,\infty)$. It is clear that if $F,G\in \Delta_+$ then $F\Box \hspace{-.98em}\lor G$ and $\Lambda^\lor (F)$ are in $\Delta_+$. Furthermore, every distribution function $F$ in $\Delta_+$ is freely max-infinitely divisible since $\alpha(F)=0>-\infty$.

 

In particular, we consider the following subset of $\Delta_+$:
\begin{align*}
\Delta_+^{(0)}:=\{F\in \Delta_+: F(0)=0\}.
\end{align*}
We can see that $\Delta_+^{(0)}$ is the set of all distribution functions of strictly positive random variables.  By the definition, the set $\Delta_+^{(0)}$ is preserved under free max-convolution, that is, if $F,G\in \Delta_+^{(0)}$, then $F \Box \hspace{-.95em} \lor G \in \Delta_+^{(0)}$. Moreover, we have $\alpha(F)\ge 0$ for $F\in \Delta_+^{(0)}$.

Finally, we define free max-infinite divisibility of distribution function in $\Delta_+^{(0)}$. A distribution function $F\in \Delta_+^{(0)}$ is said to be {\it free regular max-infinitely divisible} if for each $n\in\mathbb{N}$, there is $F_n\in \Delta_+^{(0)}$ such that $F=F_n^{\Box\hspace{-.55em}\lor n}$. Note that for any $F\in \Delta_+^{(0)}$, we have $\alpha(F^{\Box \hspace{-.55em} \lor n})>0$. By using similar argument as in the proof of Proposition \ref{prop:FMIDeq}, we can prove that $F\in \Delta_+^{(0)}$ is free regular max-infinitely divisible if and only if $\alpha(F)>0$. For example, the Pareto distribution is free regular max-infinitely divisible. It is easy to see that if $F\in\Delta_+^{(0)}$ then $\Lambda^\lor (F)\in \Delta_+^{(0)}$ is free regular max-infinitely divisible.

\subsection{Free max-analogue of the compound Poisson distribution}

Recall the max-analogue of the compound Poisson distribution (which has already been appeared in \cite[Example 1]{BR77}). Let $\nu\in\mathcal{P}$ with $\nu(\{0\})=0$ and $\lambda\ge 0$. Suppose a distribution function $G$ of $\nu$. The max-analogue of the compound Poisson distribution $\Pi_{\lambda,G}^{\lor}$ is defined as the weak limit of $F_N^{\lor N}$ as $N\rightarrow\infty$, where
\begin{align*}
F_N(x)=\begin{cases}
\left(1-\frac{\lambda}{N}\right)+\frac{\lambda}{N}G(x), & x\ge 0\\
\frac{\lambda}{N}G(x), & x<0,
\end{cases}
\qquad \lambda\le N.
\end{align*} 
The distribution is explicitly written by
\begin{align*}
\Pi_{\lambda,G}^{\lor}(x)=\begin{cases}
\exp\left(-\lambda(1-G(x))\right), &x\ge0\\
0, & x<0.
\end{cases}
\end{align*}

For $\lambda\ge0$ and $G\in\Delta$, we define $\Pi_{\lambda,G}^{\Box \hspace{-.55em}\lor}:=\Lambda^\lor (\Pi_{\lambda,G}^{\lor})\in \Delta_+$. Then it is written by
\begin{align*}
\Pi_{\lambda,G}^{\Box \hspace{-.55em}\lor}(x)=\begin{cases}
(1-\lambda(1-G(x)))_+, &x\ge 0\\
0, & x<0.
\end{cases}
\end{align*}

It is easy to check that $\Pi_{\lambda,G}^{\Box \hspace{-.55em}\lor}$ is the weak limit of $F_N^{\Box \hspace{-.55em} \lor N}$ as $N\rightarrow\infty$. Therefore we say that $\Pi_{\lambda,G}^{\Box \hspace{-.55em}\lor}$ is the free max-analogue of the compound Poisson distribution.
In \cite{HW18}, Huang and Wang have already obtained the bi-free max-case of the compound Poisson distribution.

\section{Theory of Boolean max-infinitely divisible distributions}
\label{sec:7}

All materials in the following discussions are based on \cite{VV18}. We can construct two families $(\tilde{X_i})_{i\in I}$, $(\tilde{Y_j})_{j\in J}$ of Boolean independent bounded noncommutative real random variables (selfadjoint operators) on some Hilbert space $(\mathcal{H},\xi)$ equipped with a unit vector $\xi\in\mathcal{H}$ as follow. 

Consider two families $(X_i)_{i\in I}$ and $(Y_j)_{j\in J}$ of bounded selfadjoint operators on $(\mathcal{H}_1,\xi_1)$ and  $(\mathcal{H}_2,\xi_2)$, respectively. If we consider a Hilbert space $(\mathcal{H},\xi)$ equipped with a unit vector $\xi\in\mathcal{H}$, where $\mathcal{H}:=(\mathcal{H}_1\ominus \mathbb{C}\xi_1)\oplus (\mathcal{H}_2\ominus \mathbb{C}\xi_2) \oplus \mathbb{C}\xi$ and identifying isometries $V_1:\mathcal{H}_1\rightarrow \mathcal{H}$ and $V_2:\mathcal{H}_2\rightarrow \mathcal{H}$, then we have
\begin{align*}
V_1|_{\mathcal{H}_1\ominus\mathbb{C}\xi_1}&=I_{\mathcal{H}_1\ominus\mathbb{C}\xi_1}, \qquad V_1\xi_1=\xi;\\
V_2|_{\mathcal{H}_2\ominus\mathbb{C}\xi_2}&=I_{\mathcal{H}_2\ominus\mathbb{C}\xi_2}, \qquad V_2\xi_2=\xi.
\end{align*}
Consider $\tilde{X_i}:=V_1X_iV_1^*$ and $\tilde{Y_j}:=V_2Y_jV_2^*$ for all $i\in I$ and $j\in J$. Then $(\tilde{X_i})_{i\in I}$ and $(\tilde{Y_j})_{j\in J}$ are Boolean independent bounded selfadjoint operators on $(\mathcal{H},\xi)$. Note that
\begin{align*}
\langle (\tilde{X_i})^n\xi,\xi \rangle_\mathcal{H}=\langle (X_i)^n\xi_1,\xi_1 \rangle_{\mathcal{H}_1}, \qquad \langle (\tilde{Y_j})^n\xi,\xi \rangle_\mathcal{H}=\langle (Y_j)^n\xi_2,\xi_2 \rangle_{\mathcal{H}_2},
\end{align*}
for any $i\in I$, $j\in J$ and $n\in\mathbb{N}$.

This discussion works for unbounded real random variables (selfadjoint operators) by replacing operators to spectral scales. However, $\mathbf{E}(\tilde{X}; (-\infty,t])$ and $\mathbf{E}(X; (-\infty,t])$ are not equivalent even if $\tilde{X}$ and $X$ are equivalent, where $X$ and $Y$ are said to be {\it equivalent} if there exists an isometry $V$ such that $Y=VXV^*$. Actually, we have
\begin{align*}
\mathbf{E}(\tilde{X_i}; (-\infty,t])&=V_1\mathbf{E}(X_i; (-\infty,t])V_1^*, \qquad t<0,\\
\mathbf{E}(\tilde{X_i}; (-\infty,t])&=V_1\mathbf{E}(X_i; (-\infty,t])V_1^*+P_{\mathcal{H}_2\ominus \mathbb{C}\xi_2}, \qquad t\ge 0.
\end{align*}
Similarly, we have
\begin{align*}
\mathbf{E}(\tilde{Y_j}; (-\infty,t])&=V_2\mathbf{E}(Y_j; (-\infty,t])V_2^*, \qquad t<0,\\
\mathbf{E}(\tilde{Y_j}; (-\infty,t])&=V_2\mathbf{E}(Y_j; (-\infty,t])V_2^*+P_{\mathcal{H}_1\ominus \mathbb{C}\xi_1}, \qquad t\ge 0.
\end{align*}
Since we have
\begin{align*}
P:=(V_1\mathbf{E}(X_i, (-\infty,t])V_1^*)\land (V_2\mathbf{E}(Y_j, (-\infty,t])V_2^*)\le P_{\mathbb{C}\xi},
\end{align*}
the projection $P$ is either $0$ or $P_{\mathbb{C}\xi}$, and therefore $\langle P\xi,\xi\rangle_\mathcal{H}\in \{0,1\}$. On the other hand, the projection
\begin{align*}
Q:=(V_1\mathbf{E}(X_i, (-\infty,t])V_1^*+P_{\mathcal{H}_2\ominus \mathbb{C}\xi_2} )\land (V_2\mathbf{E}(Y_j, (-\infty,t])V_2^*+P_{\mathcal{H}_1\ominus \mathbb{C}\xi_1})
\end{align*}
is neither $0$ nor $P_{\mathbb{C}\xi}$, and therefore $\langle Q\xi,\xi\rangle_{\mathcal{H}}\in [0,1]$ takes non-trivial value.

Finally, the non-trivial spectral max works when we choose $X\ge 0$ and $Y\ge0$. Strictly speaking, the spectral projections $\mathbf{E}(\tilde{X}\lor \tilde{Y}; (-\infty,t])$ and $\mathbf{E}(\tilde{X}\land \tilde{Y};(-\infty,t])$ are neither $0$ nor $P_{\mathbb{C}\xi}$ if $X$ and $Y$ are positive. Therefore, we assume positivity of real random variables when we consider the maximum of Boolean independent real random variables.

\subsection{Boolean max-convolution}
\label{sec:8}

Consider a Hilbert space $(\mathcal{H},\xi)$ equipped with a unit vector $\xi\in\mathcal{H}$. Firstly we define Boolean max-convolution of Boolean independent projections on $(\mathcal{H},\xi)$. Define the vector state $\varphi(X):=\langle X\xi,\xi \rangle_\mathcal{H}$ on the set of all operators on $(\mathcal{H},\xi)$. If $P$ is a projection on $(\mathcal{H},\xi)$, then its spectral distribution function $F_P$ is 
\begin{align*}
F_P(x)=\begin{cases}
0, & x<0\\
p, & 0\le x <1\\
1, & x\ge 1,
\end{cases}
\end{align*}
where $p=\varphi(I-P)\in [0,1]$. In \cite[Lemma 3.1]{VV18}, Vargas and Voiculescu have the following statement.

\begin{lemma}\label{lem:Bip}
If $P,Q$ are Boolean independent projections on $(\mathcal{H},\xi)$ with $\varphi(P)=1-p$ and $\varphi(Q)=1-q$, then $\varphi(P\lor Q)=1-r$, where $r\in [0,1]$ satisfies that
\begin{align*}
r^{-1}-1=(p^{-1}-1)+(q^{-1}-1).
\end{align*}
\end{lemma}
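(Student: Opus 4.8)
The plan is to work in the concrete Boolean product model from the construction above. Realize $P = V_1 P_0 V_1^*$ and $Q = V_2 Q_0 V_2^*$ for projections $P_0$ on $\mathcal{H}_1$ and $Q_0$ on $\mathcal{H}_2$, and set $\mathcal{K}_1 := \mathcal{H}_1\ominus\mathbb{C}\xi_1$, $\mathcal{K}_2 := \mathcal{H}_2\ominus\mathbb{C}\xi_2$, so that $\mathcal{H} = \mathcal{K}_1\oplus\mathcal{K}_2\oplus\mathbb{C}\xi$ is an orthogonal decomposition. Since $V_1^*$ annihilates $\mathcal{K}_2$ and $V_2^*$ annihilates $\mathcal{K}_1$, one gets $\operatorname{ran}(P)\subseteq\mathcal{K}_1\oplus\mathbb{C}\xi$ and $\operatorname{ran}(Q)\subseteq\mathcal{K}_2\oplus\mathbb{C}\xi$. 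First I would compute the two ``overlap'' vectors $\alpha := P\xi$ and $\beta := Q\xi$. Using $\langle P\xi,\xi\rangle = \varphi(P) = 1-p$ and writing $\alpha = \alpha_0 + \langle\alpha,\xi\rangle\xi$ with $\alpha_0\in\mathcal{K}_1$, one finds $\alpha = \alpha_0 + (1-p)\xi$ with $\|\alpha_0\|^2 = p(1-p)$, and symmetrically $\beta = \beta_0 + (1-q)\xi$ with $\beta_0\in\mathcal{K}_2$ and $\|\beta_0\|^2 = q(1-q)$. The decisive structural point is that $\alpha_0$, $\beta_0$ and $\xi$ are mutually orthogonal.

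Next I would express $P\lor Q$ as the orthogonal projection onto $\mathcal{L} := \overline{\operatorname{ran}(P)+\operatorname{ran}(Q)}$ and use the variational formula $\varphi(P\lor Q) = \|\mathrm{proj}_{\mathcal{L}}\xi\|^2 = \sup\{|\langle\xi,u\rangle|^2 : u\in\mathcal{L},\ \|u\|=1\}$. Writing a general element of $\operatorname{ran}(P)+\operatorname{ran}(Q)$ as $u = w+z$ with $w\in\operatorname{ran}(P)$, $z\in\operatorname{ran}(Q)$, and decomposing $w = w_1 + t_w\xi$, $z = z_1 + t_z\xi$ with $w_1\in\mathcal{K}_1$, $z_1\in\mathcal{K}_2$, the orthogonality from the first step gives $\langle\xi,u\rangle = t_w+t_z$ and $\|u\|^2 = \|w_1\|^2 + \|z_1\|^2 + |t_w+t_z|^2$. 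For a fixed value $t_w = \langle w,\xi\rangle = \langle w,\alpha\rangle$, Cauchy--Schwarz shows the minimal norm is attained on the line $\mathbb{C}\alpha$, which yields $\|w_1\|^2 = |t_w|^2\,p/(1-p)$, and likewise $\|z_1\|^2 = |t_z|^2\,q/(1-q)$. Thus the whole supremum collapses to a scalar optimization.

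Then the problem reduces to maximizing $\frac{|t_w+t_z|^2}{A|t_w|^2 + B|t_z|^2 + |t_w+t_z|^2}$ over $t_w,t_z$, where $A := p/(1-p)$ and $B := q/(1-q)$. Taking $t_w,t_z$ real and optimizing (the minimum of $At_w^2 + Bt_z^2$ subject to $t_w+t_z$ held fixed equals $\tfrac{AB}{A+B}(t_w+t_z)^2$) gives $\varphi(P\lor Q) = \frac{A+B}{A+B+AB}$, hence $r = 1-\varphi(P\lor Q) = \frac{AB}{A+B+AB}$. Finally I would simply read off $r^{-1}-1 = \frac{1}{A} + \frac{1}{B} = (p^{-1}-1)+(q^{-1}-1)$, which is the claimed identity.

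I expect the main obstacle to be the reduction in the second paragraph: justifying that, even though $\operatorname{ran}(P)$ and $\operatorname{ran}(Q)$ may be infinite dimensional, the supremum defining $\mathrm{proj}_{\mathcal{L}}\xi$ is governed entirely by the single directions $\alpha$ and $\beta$. This is precisely where the orthogonal splitting $\mathcal{H} = \mathcal{K}_1\oplus\mathcal{K}_2\oplus\mathbb{C}\xi$ and the Cauchy--Schwarz step do the work. The degenerate cases $p\in\{0,1\}$ or $q\in\{0,1\}$ (where $A$ or $B$ is $0$ or $\infty$) should be treated separately, with the stated identity read in the extended reals; they correspond to $P\xi=\xi$ or $P\xi=0$ and are immediate.
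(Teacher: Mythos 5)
Your argument is correct, but note that the paper does not prove this lemma at all: it is imported verbatim as \cite[Lemma 3.1]{VV18}, so what you have written is a genuine self-contained proof rather than a variant of one in the text. Your computation in the canonical Boolean product model $\mathcal{H}=\mathcal{K}_1\oplus\mathcal{K}_2\oplus\mathbb{C}\xi$ is sound: the norms $\|\alpha_0\|^2=p(1-p)$, $\|\beta_0\|^2=q(1-q)$ follow from $\|P\xi\|^2=\varphi(P)$, the mutual orthogonality of $\alpha_0,\beta_0,\xi$ is exactly the Boolean splitting, the Cauchy--Schwarz reduction of the variational problem to the two lines $\mathbb{C}\alpha$, $\mathbb{C}\beta$ is valid (the minimizer $w=t_w\alpha/\|\alpha\|^2$ does lie in $\operatorname{ran}P$), and the scalar optimization gives $\varphi(P\lor Q)=(A+B)/(A+B+AB)$, which unwinds to the stated identity. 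Two small points are worth making explicit. First, the supremum defining $\|\mathrm{proj}_{\mathcal{L}}\xi\|$ may be taken over the dense (not necessarily closed) subspace $\operatorname{ran}P+\operatorname{ran}Q$, since $u\mapsto|\langle\xi,u\rangle|^2/\|u\|^2$ is continuous on $\mathcal{L}\setminus\{0\}$. Second, the lemma as stated concerns arbitrary Boolean independent projections, so one should justify that computing in the concrete model loses no generality; this follows because $P\lor Q=\mathbf{E}(P+Q;(0,\infty))$ is the support projection of $P+Q\ge 0$, so $\varphi(P\lor Q)$ is determined by the joint distribution of $(P,Q)$ in the state $\varphi$, which Boolean independence together with the marginals fixes. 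With those two remarks added, your proof stands on its own and supplies exactly the computation the paper delegates to \cite{VV18}.
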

If either $p$ or $q$ is equal to $0$, then we define $r=0$. Due to Lemma \ref{lem:Bip}, we can define a semigroup structure on $[0,1]$. We define an operation $\cup \hspace{-.67em}\lor$ on $[0,1]$ by setting
\begin{align*}
(x \cup \hspace{-.88em}\lor y)^{-1}-1:=(x^{-1}-1)+(y^{-1}-1), \qquad x,y\in [0,1],
\end{align*}
where we understand $x \cup \hspace{-.88em}\lor  y=0$ if either $x$ or $y$ is equal to $0$.


Considering the above discussion, we define Boolean max-convolution as follow.

\begin{definition}
Consider $F,G\in \Delta_+$. We define a distribution function $F\cup \hspace{-.80em}\lor G$ as follows:
\begin{align*}
(F\cup \hspace{-.88em}\lor G)(x):=F(x) \cup \hspace{-.88em}\lor  G(x).
\end{align*}
The operation $\cup \hspace{-.68em}\lor:\Delta_+\times \Delta_+\rightarrow \Delta_+$, $(F,G)\mapsto F\cup \hspace{-.88em}\lor G$, is called {\it Boolean max-convolution}. Note the slight abuse of notation resulting from the dual use of the symbol $\cup \hspace{-.68em}\lor$.
\end{definition}

\begin{remark}
Recall that $\Delta_+$ is the set of all distribution functions on $[0,\infty)$. We can regard $\Delta_+$ as the set of functions $G:[0,\infty)\rightarrow [0,1]$ which are increasing, right-continuous and satisfy $\lim_{x\rightarrow\infty}G(x)=1$ by restricting a distribution function $F\in \Delta_+$ to a function $F|_{[0,\infty)}$. In \cite{VV18}, Vargas and Voiculescu use this identification to discuss extreme value theory in Boolean setting. However, the reader notes not to use this identification in our paper.
\end{remark}

In \cite[Lemma 3.3]{VV18}, if $X,Y\ge0$ are Boolean independent positive random variables on $(\mathcal{H},\xi)$, then $F_{X\lor Y}=F_X \cup \hspace{-.73em}\lor F_Y$, where $F_X,F_Y$ and $F_{X\lor Y}$ mean spectral distribution functions. Thus, Boolean max-convolution realizes the spectral distribution function of maximum of Boolean independent positive random variables.

By definition of Boolean max-convolution, for any $F, G \in \Delta_+$, we have
\begin{align*}
F\cup \hspace{-.88em}\lor G=\frac{FG}{F+G-FG}\in\Delta_+,
\end{align*}
and
\begin{align*}
F^{\cup \hspace{-.50em}\lor n}:= \overbrace{F \cup \hspace{-.88em}\lor \cdots \cup \hspace{-.88em}\lor F}^{n \text{ times}}=\frac{F}{n-(n-1)F}\in\Delta_+, \qquad n\in\mathbb{N}.
\end{align*}

In \cite[Lemma 3.2]{VV18}, two semigroups $([0,1], \cup \hspace{-.66em}\lor )$ and $([0,1], \cdot)$ are isomorphic by an order preserving isomorphism $\mathcal{X}:[0,1]\rightarrow [0,1]$ defined by
\begin{align*}
\mathcal{X}(u):&=\exp(1-u^{-1}), \qquad u\in (0,1],\\
\mathcal{X}(0):&=0.
\end{align*}
Note that $\mathcal{X}(u^{\cup \hspace{-.50em}\lor n})=\mathcal{X}(u)^n$ for all $u\in [0,1]$ and $n\in\mathbb{N}$. It is easy to see that $\mathcal{X}^{-1}$ is given by
\begin{align*}
\mathcal{X}^{-1}(u)&=\frac{1}{1-\log u}, \qquad u\in (0,1],\\
\mathcal{X}^{-1}(0)&=0.
\end{align*} 
Obviously, we have $\mathcal{X}^{-1}(u^n)=\mathcal{X}^{-1}(u)^{\cup \hspace{-.50em}\lor n}$ for all $u\in[0,1]$ and $n\in\mathbb{N}$.

Next we define $\mathcal{X}$ and $\mathcal{X}^{-1}$ as maps on $\Delta_+$. For a distribution function $F\in \Delta_+$, we define
\begin{align*}
\mathcal{X}(F)(x):=\mathcal{X}(F(x)) \text{ and }  \mathcal{X}^{-1}(F)(x):=\mathcal{X}^{-1}(F(x)), \qquad x\in \mathbb{R}.
\end{align*} 
Note the slight abuse of notation resulting from the dual use of the symbols $\mathcal{X}$ and $\mathcal{X}^{-1}$. We then have 
\begin{align*}
\mathcal{X}(F\cup \hspace{-.80em}\lor G)&=\mathcal{X}(F)\mathcal{X}(G),\qquad \mathcal{X}(F^{\cup \hspace{-.50em}\lor n})=\mathcal{X}(F)^n,
\end{align*}
and
\begin{align*}
\mathcal{X}^{-1}(FG)&=\mathcal{X}^{-1}(F) \cup \hspace{-.80em}\lor\mathcal{X}^{-1}(G), \qquad \mathcal{X}^{-1}(F^n)=\mathcal{X}^{-1}(F)^{\cup \hspace{-.50em}\lor n},
\end{align*}
for distribution functions $F,G\in \Delta_+$ and $n\in\mathbb{N}$.

\subsection{Boolean max-infinitely divisible distributions}
\label{sec:9}

In this section, we introduce Boolean max-infinitely divisible distributions and semigroup of distribution functions with respect to Boolean max-convolution.

\begin{definition}
$F\in \Delta_+$ is said to be {\it Boolean max-infinitely divisible} if for each $n\in\mathbb{N}$ there exists $F_n\in \Delta_+$ such that $F=F_n^{\cup \hspace{-.50em}\lor n}$.
\end{definition}

We study the class of Boolean max-infinitely divisible distribution functions.

\begin{proposition}\label{thm:BMID}
Every distribution function in $\Delta_+$ is Boolean max-infinitely divisible.
\end{proposition}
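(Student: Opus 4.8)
The plan is to exhibit, for each $F\in\Delta_+$ and each $n\in\mathbb{N}$, an explicit Boolean max-$n$-th root $F_n\in\Delta_+$ satisfying $F_n^{\cup \hspace{-.50em}\lor n}=F$. The starting point is the explicit formula for iterated Boolean max-convolution already recorded above, namely $F^{\cup \hspace{-.50em}\lor n}=\frac{F}{n-(n-1)F}$. Treating this as an equation to be solved, I would set $G:=F_n^{\cup \hspace{-.50em}\lor n}$, invert the algebraic relation $G=\frac{F_n}{n-(n-1)F_n}$, and obtain the candidate
\begin{align*}
F_n:=\frac{nF}{1+(n-1)F}.
\end{align*}

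First I would verify that $F_n\in\Delta_+$. Write $g(u):=\frac{nu}{1+(n-1)u}$ for $u\in[0,1]$, so that $F_n=g\circ F$. The map $g$ sends $[0,1]$ into $[0,1]$, is continuous, and is strictly increasing (its derivative equals $n(1+(n-1)u)^{-2}>0$), with $g(0)=0$ and $g(1)=1$. Consequently $F_n=g\circ F$ inherits monotonicity and right-continuity from $F$, takes values in $[0,1]$, vanishes for $x<0$ since $F$ does, and tends to $1$ as $x\to\infty$ because $F$ does; hence $F_n\in\Delta_+$.

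It then remains to check $F_n^{\cup \hspace{-.50em}\lor n}=F$. I would simply substitute the candidate into $F_n^{\cup \hspace{-.50em}\lor n}=\frac{F_n}{n-(n-1)F_n}$: the denominator simplifies to $n-(n-1)\frac{nF}{1+(n-1)F}=\frac{n}{1+(n-1)F}$, and dividing the numerator $\frac{nF}{1+(n-1)F}$ by this yields exactly $F$. Alternatively, the same root can be obtained conceptually through the isomorphism $\mathcal{X}$: setting $F_n:=\mathcal{X}^{-1}\big(\mathcal{X}(F)^{1/n}\big)$ and invoking the relations $\mathcal{X}(F^{\cup \hspace{-.50em}\lor n})=\mathcal{X}(F)^n$ and $\mathcal{X}\circ\mathcal{X}^{-1}=\mathrm{id}$ gives $\mathcal{X}(F_n^{\cup \hspace{-.50em}\lor n})=\mathcal{X}(F_n)^n=\mathcal{X}(F)$, whence $F_n^{\cup \hspace{-.50em}\lor n}=F$; here $\mathcal{X}(F)\in\Delta_+$ because $\mathcal{X}$ is an order isomorphism of $[0,1]$ fixing $0$ and $1$, and the $n$-th root on $[0,1]$ again preserves membership in $\Delta_+$.

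The computation is entirely routine, so there is no serious obstacle; the only point requiring genuine (if light) care is the verification that the candidate $F_n$ is a bona fide element of $\Delta_+$, that is, that the algebraic transformation $g$ preserves monotonicity, right-continuity, the correct support on $[0,\infty)$, and the normalization at $+\infty$. This contrasts sharply with the free case (Proposition \ref{prop:FMIDeq}), where only distribution functions with $\alpha(F)>-\infty$ are max-infinitely divisible; the absence of any such restriction here is precisely the reflection of the fact that $([0,1],\cup \hspace{-.88em}\lor)$ is isomorphic via $\mathcal{X}$ to the full multiplicative semigroup $([0,1],\cdot)$, in which every element has an $n$-th root.
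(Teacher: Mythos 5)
Your proof is correct and is essentially the paper's argument: the explicit root $F_n=\frac{nF}{1+(n-1)F}$ you construct is exactly the paper's $\mathcal{X}^{-1}(F^{1/n})=\frac{F}{1/n-(1/n-1)F}$, and your alternative derivation via the isomorphism $\mathcal{X}$ is precisely the route the paper takes. Your added verification that $F_n\in\Delta_+$ is a welcome bit of care that the paper leaves implicit.
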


\begin{proof}
Consider a distribution function $F\in \Delta_+$. Fix $x\in\mathbb{R}$. Then there is a value $u\in [0,1]$ with $F(x)=\mathcal{X}^{-1}(u)$ since $\mathcal{X}^{-1}$ (and therefore $\mathcal{X}$) is bijection on $[0,1]$. For each $n\in\mathbb{N}$, we get
\begin{align*}
F(x)=\mathcal{X}^{-1}(u)=\mathcal{X}^{-1}(u^{1/n})^{\cup \hspace{-.50em}\lor n}=\mathcal{X}^{-1}(F^{1/n}(x))^{\cup \hspace{-.50em}\lor n}.
\end{align*}
Therefore we obtain $F=\mathcal{X}^{-1}(F^{1/n})^{\cup \hspace{-.50em}\lor n}$.
\end{proof}

This proposition is similar to the fact that every probability measure is (additive) Boolean infinitely divisible (see \cite[Theorem 3.6]{SW}). For any $F\in\Delta_+$ and each $n\in\mathbb{N}$, a choice of the $n$-th root of $F$ is unique. 

We can explicitly write the distribution $\mathcal{X}^{-1}(F^{1/n})$ as follows:
\begin{align*}
\mathcal{X}^{-1}(F^{1/n})=\frac{F}{1/n-(1/n-1)F}, \qquad n\in\mathbb{N}.
\end{align*}
For any distribution functions $F\in \Delta_+$, we denote by $F^{\cup \hspace{-.55em}\lor 1/n}:=\mathcal{X}^{-1}(F^{1/n})$ for $n\in\mathbb{N}$. Furthermore, we can extend the above distribution function to $F^{\cup \hspace{-.50em}\lor t}$ for $t\ge0$ as follows:
\begin{align*}
F^{\cup \hspace{-.50em}\lor t}:&=\mathcal{X}^{-1}(F^{t})=\frac{F}{t-(t-1)F}, \qquad t>0,\\
F^{\cup \hspace{-.50em}\lor 0}:&=\mathbf{1}_{[\alpha(F),\infty)}.
\end{align*}
By definition of $\mathcal{X}^{-1}$, the following proposition holds.

\begin{proposition}\label{prop:B_t}
\begin{enumerate}[(1)]
\item The map $[0,\infty)\rightarrow \Delta_+$, $t\mapsto F^{\cup \hspace{-.50em}\lor t}$, is weakly continuous.
\item  We have $F^{\cup \hspace{-.50em}\lor t}\cup \hspace{-.80em} \lor F^{\cup \hspace{-.50em}\lor s}=F^{\cup \hspace{-.50em}\lor t+s}$ and $(F^{\cup \hspace{-.50em}\lor t})^{\cup \hspace{-.50em}\lor s}=F^{\cup \hspace{-.50em}\lor ts}$ for any distribution functions $F\in \Delta_+$ and $t,s\ge0$. 
\end{enumerate}
\end{proposition}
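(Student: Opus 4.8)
The plan is to prove part (2) by transporting both algebraic identities through the order-preserving isomorphism $\mathcal{X}$, which converts Boolean max-convolution into ordinary pointwise multiplication, so that each identity collapses to a triviality for powers of a $[0,1]$-valued function; and to prove part (1) by a direct pointwise analysis of the explicit rational expression $F^{\cup \hspace{-.50em}\lor t}=F/(t-(t-1)F)$. Throughout I take the real-exponent power to be governed by $\mathcal{X}(F^{\cup \hspace{-.50em}\lor t})=\mathcal{X}(F)^t$ for $t>0$, the continuous analogue of the integer relation $\mathcal{X}(F^{\cup \hspace{-.50em}\lor n})=\mathcal{X}(F)^n$ recorded above; this is equivalent to the rational formula, which I will use for the explicit computations.

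For part (2) I would first treat $t,s>0$. Applying $\mathcal{X}$ to $F^{\cup \hspace{-.50em}\lor t}\cup \hspace{-.80em}\lor F^{\cup \hspace{-.50em}\lor s}$ and using $\mathcal{X}(G\cup \hspace{-.80em}\lor H)=\mathcal{X}(G)\mathcal{X}(H)$ turns the left-hand side into $\mathcal{X}(F)^t\,\mathcal{X}(F)^s=\mathcal{X}(F)^{t+s}=\mathcal{X}(F^{\cup \hspace{-.50em}\lor (t+s)})$; since $\mathcal{X}$ is a bijection on $[0,1]$, hence injective as a map on $\Delta_+$, the first identity follows. For the second, applying $\mathcal{X}$ gives $\mathcal{X}((F^{\cup \hspace{-.50em}\lor t})^{\cup \hspace{-.50em}\lor s})=\mathcal{X}(F^{\cup \hspace{-.50em}\lor t})^s=(\mathcal{X}(F)^t)^s=\mathcal{X}(F)^{ts}=\mathcal{X}(F^{\cup \hspace{-.50em}\lor ts})$, and injectivity closes the argument. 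Because these are pointwise identities in $[0,1]$, one can equally substitute $F^{\cup \hspace{-.50em}\lor t}=F/(t-(t-1)F)$ into the scalar rule $a\cup \hspace{-.80em}\lor b=ab/(a+b-ab)$ and simplify: the relevant denominator collapses to $F\bigl[(s+t)-(s+t-1)F\bigr]$, yielding $F^{\cup \hspace{-.50em}\lor (t+s)}$ directly, and analogously the nested composition yields $F/(ts-(ts-1)F)=F^{\cup \hspace{-.50em}\lor ts}$; so the isomorphism step is only a bookkeeping convenience.

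The boundary cases $t=0$ or $s=0$ I would check separately, since there $F^{\cup \hspace{-.50em}\lor 0}=\mathbf{1}_{[\alpha(F),\infty)}$ is prescribed by hand. For $t>0$ one has $\alpha(F^{\cup \hspace{-.50em}\lor t})=\alpha(F)$ because $F^{\cup \hspace{-.50em}\lor t}(x)=0\Leftrightarrow F(x)=0$; feeding this into the scalar rules $x\cup \hspace{-.80em}\lor 1=x$ and $x\cup \hspace{-.80em}\lor 0=0$ verifies $F^{\cup \hspace{-.50em}\lor t}\cup \hspace{-.80em}\lor F^{\cup \hspace{-.50em}\lor 0}=F^{\cup \hspace{-.50em}\lor t}$ and $(F^{\cup \hspace{-.50em}\lor t})^{\cup \hspace{-.50em}\lor 0}=(F^{\cup \hspace{-.50em}\lor 0})^{\cup \hspace{-.50em}\lor s}=F^{\cup \hspace{-.50em}\lor 0}$ at every point, the endpoint $x=\alpha(F)$ being unproblematic with this convention.

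For part (1), fix $x$ and write $F^{\cup \hspace{-.50em}\lor t}(x)=F(x)\big/\bigl(t(1-F(x))+F(x)\bigr)$ for $t>0$. The denominator is strictly positive whenever $F(x)>0$, and the value is identically $0$ when $F(x)=0$, so $t\mapsto F^{\cup \hspace{-.50em}\lor t}(x)$ is continuous on $(0,\infty)$; pointwise convergence of distribution functions then gives weak continuity on $(0,\infty)$. Continuity at $t=0$ is the one genuinely delicate point, and is the step I expect to be the main obstacle: as $t\to0^+$ the pointwise limit equals $1$ where $F(x)>0$ and $0$ where $F(x)=0$, i.e.\ $\mathbf{1}_{(\alpha(F),\infty)}$ away from the endpoint, which agrees with the prescribed $F^{\cup \hspace{-.50em}\lor 0}=\mathbf{1}_{[\alpha(F),\infty)}$ at every continuity point of the latter. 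Hence $F^{\cup \hspace{-.50em}\lor t}\xrightarrow{w}F^{\cup \hspace{-.50em}\lor 0}$ as $t\to0^+$, and the map is weakly continuous on all of $[0,\infty)$. The only care required is to argue via continuity points rather than pointwise everywhere, precisely so that the ambiguous value at $x=\alpha(F)$ does not obstruct the weak convergence.
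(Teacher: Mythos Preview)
Your proposal is correct and follows the same approach the paper gestures at: the paper simply writes ``By definition of $\mathcal{X}^{-1}$, the following proposition holds'' with no further argument, and your proof is precisely the unpacking of that remark, using the isomorphism $\mathcal{X}$ to reduce the algebraic identities in (2) to trivialities about real powers, and the explicit rational formula to verify the pointwise (hence weak) continuity in (1). Your added care with the boundary cases $t=0$ or $s=0$ and with continuity at $t=0^{+}$ via continuity points of $\mathbf{1}_{[\alpha(F),\infty)}$ fills in details the paper leaves implicit.
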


\subsection{Boolean max-stable laws}
\label{sec:10}

In this section, we introduce the Boolean max-stable laws. They are the most important to consider Boolean max-probability theory from a reason that it is analogue of classcal and free max-stable (extreme value) laws which are very important to study extreme value statistics.

\begin{definition}\label{def:BMS}
A non-degenerate distribution function $G\in \Delta_+$ is said to be {\it Boolean max-stable} if for some $F\in\Delta_+$ there exists $a_n>0$ such that $F^{\cup \hspace{-.50em}\lor n}(a_n\cdot)\xrightarrow{w} G(\cdot)$. 
\end{definition}

The Boolean max-stable laws are analogue of classical/freely max-stable laws. This is the weak limit of spectral distribution functions of 
\begin{align*}
\frac{X_1\lor \cdots \lor X_n}{a_n}, \qquad n\in\mathbb{N},
\end{align*}
for some sequences $\{X_n\}_n$ of Boolean independent identically distributed positive random variables, $\{a_n\}_n\subset (0,\infty)$, where the symbol $\lor$ means the spectral maximum.

Looking at the map $\mathcal{X}$ on $\Delta_+$ and the classical max-stable laws, we obtain the following important theorem.

\begin{theorem}\label{lem:b-max-stable} (See \cite[Theorem 4.1, Corollary 4.1]{VV18})
A non-degenerate distribution function $G\in \Delta_+$ is Boolean max-stable if and only if
\begin{align*}
\mathcal{X}(G)(x)=\begin{cases}
\exp(-\lambda x^{-\alpha}), & x>0\\
0, & x\le 0
\end{cases}
\end{align*}
for some $\lambda,\alpha>0$. Equivalently, 
\begin{align*}
G(x)=\begin{cases}
(1+\lambda x^{-\alpha})^{-1}, & x>0\\
0, & x\le 0
\end{cases}
\end{align*}
for some $\lambda,\alpha>0$.
\end{theorem}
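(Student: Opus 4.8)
The plan is to transport the whole problem to the classical setting through the order isomorphism $\mathcal{X}$ and then to invoke the Fisher--Tippett--Gnedenko classification. The key structural fact is that $\mathcal{X}:\Delta_+\to\Delta_+$ is a weakly continuous bijection turning Boolean max-convolution powers into ordinary powers: $\mathcal{X}(F^{\cup \hspace{-.50em}\lor n})=\mathcal{X}(F)^n$, so $\mathcal{X}(F^{\cup \hspace{-.50em}\lor n})$ evaluated at $a_n x$ equals $\mathcal{X}(F)(a_n x)^n$. Since $\mathcal{X}$ is a homeomorphism of $[0,1]$ fixing $0$ and $1$, a distribution function and its image under $\mathcal{X}$ share the same continuity points, so $\mathcal{X}$ and $\mathcal{X}^{-1}$ both preserve weak convergence. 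Consequently, the defining relation $F^{\cup \hspace{-.50em}\lor n}(a_n\cdot)\xrightarrow{w}G$ is equivalent to $\mathcal{X}(F)^n(a_n\cdot)\xrightarrow{w}\mathcal{X}(G)$. The right-hand side says exactly that $\mathcal{X}(G)$ arises as a limit of the $n$-fold classical max-convolution (ordinary $n$-th power) of $\mathcal{X}(F)\in\Delta_+$ under pure rescaling by $a_n>0$, i.e.\ $\mathcal{X}(G)$ lies in a classical max-domain of attraction with no centering.

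First I would record that $\mathcal{X}(F)\in\Delta_+$, so $\mathcal{X}(F)(a_n x)=0$ for $x<0$; passing to the limit forces $\mathcal{X}(G)(x)=0$ for $x<0$, so $\mathcal{X}(G)$ is again supported on $[0,\infty)$. Next I would invoke the classical extreme value theorem: any non-degenerate weak limit of $H^n(a_n\cdot+b_n)$ is, up to an affine change of variable, of Gumbel, Fr\'echet or Weibull type. Here the centering is absent ($b_n=0$) and the limit is supported on $[0,\infty)$, which eliminates the Gumbel type (supported on all of $\mathbb{R}$) and the Weibull type (supported on a half-line $(-\infty,x_0]$, which would force degeneracy here). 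Hence $\mathcal{X}(G)$ must be of Fr\'echet type. To fix the left endpoint at $0$, I would use that the left endpoints of the prelimit, $\alpha(\mathcal{X}(F))/a_n$, tend to $0$ (one has $a_n\to\infty$ in the Fr\'echet domain of attraction), so no positive shift survives; this yields $\mathcal{X}(G)(x)=\exp(-\lambda x^{-\alpha})$ for $x>0$ and $0$ for $x\le 0$, with $\lambda,\alpha>0$, where $\lambda$ encodes the residual scaling.

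Finally I would apply $\mathcal{X}^{-1}(u)=(1-\log u)^{-1}$ pointwise to recover $G=\mathcal{X}^{-1}(\mathcal{X}(G))$; since $\log\exp(-\lambda x^{-\alpha})=-\lambda x^{-\alpha}$, this gives $G(x)=(1+\lambda x^{-\alpha})^{-1}$ for $x>0$ and $G(x)=0$ for $x\le 0$, the asserted formula. For the converse, given any Fr\'echet distribution $\tilde G(x)=\exp(-\lambda x^{-\alpha})$, it is classically max-stable, so $\tilde G^n(a_n\cdot)\xrightarrow{w}\tilde G$ for suitable $a_n>0$; setting $F:=\mathcal{X}^{-1}(\tilde G)\in\Delta_+$ and pushing this convergence back through the weakly continuous map $\mathcal{X}^{-1}$ shows $F^{\cup \hspace{-.50em}\lor n}(a_n\cdot)\xrightarrow{w}\mathcal{X}^{-1}(\tilde G)=G$, so $G$ is Boolean max-stable.

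The main obstacle I anticipate is the classical identification step, specifically justifying that pure rescaling (no centering) of a distribution supported on $[0,\infty)$ can only produce a Fr\'echet limit with left endpoint exactly $0$: ruling out the Gumbel and Weibull types and, within the Fr\'echet type, excluding a positive location shift. This is where the support constraint inherited from $\Delta_+$ and the absence of centering constants must be combined carefully with Gnedenko's theorem; the remainder of the argument is the essentially formal transport of weak limits across the isomorphism $\mathcal{X}$.
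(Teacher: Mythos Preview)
Your proposal is correct and matches the approach the paper indicates. The paper does not supply its own proof of this theorem; it cites \cite[Theorem 4.1, Corollary 4.1]{VV18}, and the sentence immediately preceding the statement (``Looking at the map $\mathcal{X}$ on $\Delta_+$ and the classical max-stable laws, we obtain the following important theorem'') describes exactly your strategy of transporting the problem through $\mathcal{X}$ to the classical Fisher--Tippett--Gnedenko classification and then pulling back via $\mathcal{X}^{-1}$.

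One minor remark on the step you flagged as the main obstacle: rather than arguing that $\alpha(\mathcal{X}(F))/a_n\to 0$ (which requires knowing $a_n\to\infty$ and does not by itself pin down the left endpoint of the weak limit), it is cleaner to use the convergence-of-types lemma directly. From $\mathcal{X}(F)^{nk}(a_{nk}\,\cdot)\xrightarrow{w}\mathcal{X}(G)$ and $\mathcal{X}(F)^{nk}(a_n\,\cdot)=\big(\mathcal{X}(F)^n(a_n\,\cdot)\big)^k\xrightarrow{w}\mathcal{X}(G)^k$ one gets constants $A_k>0$ with $\mathcal{X}(G)^k(A_k\,\cdot)=\mathcal{X}(G)$ (pure scaling, no shift). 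Comparing left endpoints forces $\alpha(\mathcal{X}(G))/A_k=\alpha(\mathcal{X}(G))$, so either $A_k\equiv 1$ (which makes $\mathcal{X}(G)$ degenerate) or $\alpha(\mathcal{X}(G))=0$; this simultaneously rules out the Gumbel and Weibull types and excludes any positive location shift within the Fr\'echet family.
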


By Theorem \ref{lem:b-max-stable}, every Boolean max-stable law is characterized by two parameters $\lambda>0$ and $\alpha>0$, so that we write it as
\begin{align*}
{\bf D}_{\lambda,\alpha}(x):=\begin{cases}
(1+\lambda x^{-\alpha})^{-1}, & x>0\\
0, & x\le 0.
\end{cases}
\end{align*}
In particular, we write ${\bf D}_\alpha:={\bf D}_{1,\alpha}$. The distribution ${\bf D}_{\lambda,\alpha}$ is also called the {\it Dagum distribution}.

We define Boolean max-domain of attraction of distribution functions.

\begin{definition}
$F\in \Delta_+$ is said to be in the {\it Boolean max-domain of attraction} of $G\in \Delta_+$ if for some sequence of $a_n>0$, we have $F^{\cup \hspace{-.50em}\lor n}(a_n \cdot)\xrightarrow{w} G(\cdot)$ as $n\rightarrow \infty$. In this case we write $F\in {\text{Dom}_{\cup \hspace{-.50em}\lor}}(G)$.
\end{definition}

\begin{remark}\label{rem:stable}
For each Boolean max-stable law $G={\bf D}_{\lambda,\alpha}$, we can take $F=G$ and $a_n=n^{1/\alpha}$ in Definition \ref{def:BMS}. From the reason, if $G$ is Boolean max-stable law, then $G\in {\text{Dom}_{\cup \hspace{-.50em}\lor}}(G)$.
\end{remark}


To end this section, we characterize distribution functions which are in the Boolean max-domain of attraction of Dagum distributions by behavior at the tail of distribution function. 

For a distribution function $G$ on $\mathbb{R}$, we define a function $\overline{G}:=1-G$. The function is important to study distribution functions of the maximum of random variables since we have to focus on behavior at the tail to look at statistics of the maximum of random variables. Recall the definition of regularly varying functions.

\begin{definition}
A function $f$ is said to be {\it regularly varying of index $\alpha$} if for all $t>0$ we have that $f(tx)/f(x)\rightarrow t^\alpha$ as $x\rightarrow \infty$. In particular, if $\alpha=0$, then $f$ is said to be {\it slowly varying}.
\end{definition}

From \cite[Theorem 6.12]{AV06} and \cite[Corollary 4.3]{VV18}, we obtain the following theorem.

\begin{theorem}\label{lem:max-boolean domain of attraction}
The following conditions are equivalent for $\alpha>0$.\\
(1) $G \in {\text{Dom}_{\cup \hspace{-.50em}\lor}}({\bf D_\alpha})$; \\
(2) $G$ is in the classical max-domain of attraction of the Fr\'{e}chet distribution $\Phi_\alpha$:
\begin{align*}
\Phi_\alpha(x):={\text F_{\text{II}}^{\text{classical}}}(x)=
\begin{cases}
\exp(-x^{-\alpha}), & x>0\\
0, & x\le0
\end{cases};
\end{align*}
(3) $G$ is in the free max-domain of attraction of the Pareto distribution ${\bf P}_\alpha$ with exponent $\alpha$:
\begin{align*}
{\bf P}_\alpha(x):={\text F_{\text{II}}^{\text{free}}}(x)=
\begin{cases}
(1-x^{-\alpha})_+, & x>0\\
0, & x\le 0
\end{cases} ;
\end{align*}
(4) $\overline{G}$ is regularly varying of index $-\alpha$. 
\end{theorem}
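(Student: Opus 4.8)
The plan is to prove that each of (1), (2), (3) is equivalent to the tail condition (4), using (4) as a hub; write $\mathrm{Dom}_{\lor}(H)$ for the classical max-domain of attraction of a distribution function $H$. Two elementary ingredients should drive everything. First, a direct computation gives the identity $\mathcal{X}({\bf D}_\alpha)=\Phi_\alpha$, since $\exp(1-(1+x^{-\alpha}))=\exp(-x^{-\alpha})$ for $x>0$ (the companion identity $\Lambda^\lor(\Phi_\alpha)={\bf P}_\alpha$, already noted in the paper as $\Lambda^\lor({\text F}_{\text{II}}^{\text{classical}})={\text F}_{\text{II}}^{\text{free}}$, will underlie the free case). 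Second, and crucially, $\mathcal{X}$ fixes the endpoint $1$ to first order: a short expansion at $u=1^-$ gives
\begin{align*}
\lim_{u\to 1^-}\frac{1-\mathcal{X}(u)}{1-u}=1.
\end{align*}
Since every distribution function satisfies $G(x)\to 1$ as $x\to\infty$, this yields the tail equivalence $\overline{\mathcal{X}(G)}(x)\sim\overline{G}(x)$ as $x\to\infty$; hence $\overline{G}$ is regularly varying of index $-\alpha$ if and only if $\overline{\mathcal{X}(G)}$ is, because tail-equivalent functions share the same index of regular variation.

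For (1) $\Leftrightarrow$ (4) I would transport the Boolean limit through $\mathcal{X}$. Using $\mathcal{X}(F^{\cup \hspace{-.50em}\lor n})=\mathcal{X}(F)^n$ and the fact that the order-preserving, weakly continuous bijection $\mathcal{X}$ commutes with the rescaling $a_n\cdot$, the convergence $G^{\cup \hspace{-.50em}\lor n}(a_n\cdot)\xrightarrow{w}{\bf D}_\alpha$ for some $a_n>0$ holds if and only if $\mathcal{X}(G)^{n}(a_n\cdot)\xrightarrow{w}\mathcal{X}({\bf D}_\alpha)=\Phi_\alpha$, that is, if and only if $\mathcal{X}(G)\in\mathrm{Dom}_{\lor}(\Phi_\alpha)$; the reverse implication uses the equally continuous inverse $\mathcal{X}^{-1}$ together with $\mathcal{X}^{-1}(F^n)=\mathcal{X}^{-1}(F)^{\cup \hspace{-.50em}\lor n}$. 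By the classical Gnedenko characterization of the Fr\'echet domain, $\mathcal{X}(G)\in\mathrm{Dom}_{\lor}(\Phi_\alpha)$ if and only if $\overline{\mathcal{X}(G)}$ is regularly varying of index $-\alpha$, which by the tail equivalence above is exactly (4). Condition (2) $\Leftrightarrow$ (4) is then the same classical Gnedenko theorem applied to $G$ itself.

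Finally (3) $\Leftrightarrow$ (4) is the free analogue of Gnedenko's theorem, namely \cite[Theorem 6.12]{AV06}: the free max-domain of attraction ${\text{Dom}_{\Box \hspace{-.55em}\lor}}({\bf P}_\alpha)$ is characterized by regular variation of $\overline{G}$ of index $-\alpha$. Chaining the three equivalences through (4) gives the theorem. The step I expect to be the main obstacle is precisely this free direction. Unlike $\mathcal{X}$, the homomorphism $\Lambda^\lor$ is not invertible on $\Delta$ (it misses every $G$ with $\alpha(G)=-\infty$, cf.\ the earlier Remark), so one cannot pull a free limit back to a classical one by transport. Transporting a classical limit forward through $\Lambda^\lor$ only produces $\Lambda^\lor(G)\in{\text{Dom}_{\Box \hspace{-.55em}\lor}}({\bf P}_\alpha)$, a statement about $\Lambda^\lor(G)$ rather than about $G$; securing the equivalence for $G$ itself is exactly what \cite[Theorem 6.12]{AV06} supplies, and it is the one ingredient I would not try to reprove from the paper's own machinery. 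A secondary, routine point worth recording is that composing distribution functions with the continuous monotone maps $\mathcal{X},\mathcal{X}^{-1},\Lambda^\lor$ preserves weak convergence at continuity points and commutes with the rescalings $a_n\cdot$, which legitimizes all the transports above.
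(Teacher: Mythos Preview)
Your proposal is correct and matches the paper's approach: the paper does not supply its own proof but simply cites \cite[Theorem~6.12]{AV06} for (3)$\Leftrightarrow$(4) and \cite[Corollary~4.3]{VV18} for (1)$\Leftrightarrow$(2), and your reconstruction---transporting through the continuous order-isomorphism $\mathcal{X}$, invoking the classical Gnedenko criterion (2)$\Leftrightarrow$(4), and bridging $\mathcal{X}(G)$ to $G$ via the tail equivalence $\overline{\mathcal{X}(G)}\sim\overline{G}$---is exactly the content behind those citations. One small point worth making explicit is that the classical Fr\'echet domain allows centering constants $b_n$ whereas the Boolean definition uses only scalings $a_n$, but since it is classical that one may take $b_n=0$ in the Fr\'echet case, your transport through $\mathcal{X}$ (which commutes with scaling but not with shifts) is legitimate.
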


\subsection{Tails of max-convolution power of distribution functions}
\label{sec:11}

Since $\Phi_\alpha(n^{-1/\alpha}\cdot)=\Phi_\alpha^{\lor n}$, $\mathbf{P}_\alpha(n^{-1/\alpha} \cdot)=\mathbf{P}_\alpha^{\Box \hspace{-.55em} \lor n}$ and ${\bf D_\alpha}(n^{-1/\alpha}\cdot)=\mathbf{D}_\alpha^{\cup \hspace{-.50em} \lor n}$ for all $\alpha>0$ and $n\in\mathbb{N}$, the classical, free and Boolean domains of attraction of $\Phi_\alpha$, $\mathbf{D}_\alpha$ and $\mathbf{P}_\alpha$ coincide with ones of $\Phi_\alpha^{\lor n}$, $\mathbf{P}_\alpha^{\Box \hspace{-.55em} \lor n}$ and $\mathbf{D}_\alpha^{\cup \hspace{-.50em} \lor n}$, respectively. Therefore, $\Phi_\alpha^{\lor n}$, $\mathbf{P}_\alpha^{\Box \hspace{-.55em} \lor n}$ and $\mathbf{D}_\alpha^{\cup \hspace{-.50em} \lor n}$ are regularly varying of index $-\alpha$ by Theorem \ref{lem:max-boolean domain of attraction}. This means that three type max-convolutions preserve tails of corresponding extreme value distributions. More generally, we can conclude that three type max-convolution preserve tails of distribution functions as follows. However, the following proposition has already been obtained in \cite{CH18} and \cite{HM13} to study behavior at tails of free and Boolean subexponential distributions, but for readers convenience we include the proof.

\begin{proposition}\label{thm:tail} Consider $n\in\mathbb{N}$. We have the following conditions.
\begin{enumerate}[(1)]
\item Let $F$ be a distribution function. Then $\overline{F^{\lor n}}\sim n\overline{F}$ as $x\rightarrow\infty$. 
\item Let $F$ be a distribution function. Then $\overline{F^{\Box \hspace{-.55em} \lor n} }\sim n\overline{F}$ as $x\rightarrow\infty$.
\item Let $F\in\Delta_+$. Then $\overline{F^{\cup \hspace{-.50em} \lor n}}\sim n\overline{F}$ as $x\rightarrow\infty$.
\end{enumerate}
\end{proposition}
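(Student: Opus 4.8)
The plan is to exploit the explicit formulas for the three max-convolution powers established earlier in the paper and reduce each claim to a one-line computation of $\overline{F^{\,\cdot\, n}}$ in terms of $\overline{F}$. Throughout, recall that $\overline{F}(x)\to 0$ as $x\to\infty$ because $F$ is a distribution function, so that the asymptotic equivalence $\sim$ is understood as $\overline{F^{\,\cdot\, n}}(x)/(n\overline{F}(x))\to 1$ as $x\to\infty$ (on the set where $\overline{F}(x)>0$). The whole proposition is therefore a matter of substituting each explicit formula and taking the limit.

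For part (1) I would use $F^{\lor n}=F^n$ and write $\overline{F^{\lor n}}=1-(1-\overline{F})^n$. A binomial expansion gives $1-(1-\overline{F})^n=n\overline{F}-\binom{n}{2}\overline{F}^2+\cdots$, so dividing by $n\overline{F}$ leaves $1-\tfrac{1}{n}\binom{n}{2}\overline{F}+\cdots$, which tends to $1$ as $\overline{F}\to 0$. For part (2) I would invoke $F^{\Box \hspace{-.55em}\lor n}=(nF-(n-1))_+$. Writing $F=1-\overline{F}$ gives $nF-(n-1)=1-n\overline{F}$, which is positive once $\overline{F}(x)<1/n$, hence for all sufficiently large $x$; thus $\overline{F^{\Box \hspace{-.55em}\lor n}}(x)=n\overline{F}(x)$ holds with equality for large $x$, and the equivalence is immediate.

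For part (3) I would use $F^{\cup \hspace{-.50em}\lor n}=\dfrac{F}{n-(n-1)F}$ and compute
\begin{align*}
\overline{F^{\cup \hspace{-.50em}\lor n}}=1-\frac{F}{n-(n-1)F}=\frac{n(1-F)}{n-(n-1)F}=\frac{n\overline{F}}{n-(n-1)F}.
\end{align*}
Since $F(x)\to 1$ as $x\to\infty$, the denominator $n-(n-1)F(x)\to n-(n-1)=1$, so the ratio $\overline{F^{\cup \hspace{-.50em}\lor n}}(x)/(n\overline{F}(x))=1/(n-(n-1)F(x))\to 1$, which is exactly the claim.

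I do not expect any genuine obstacle: once the explicit formulas from Sections~3 and~4 are in hand, each of the three cases collapses to a short limit computation, and all three confirm that the corresponding max-convolution multiplies the tail by precisely the factor $n$. The only point deserving a word of care is the degenerate situation in which $F$ has bounded support, so that $\overline{F}(x)=0$ for large $x$; there the asymptotic relation is satisfied vacuously and no normalization issue arises, so I would simply note this and proceed with the generic case $\overline{F}(x)>0$.
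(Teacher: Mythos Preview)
Your proposal is correct and follows essentially the same approach as the paper's proof: in each case you substitute the explicit formula for the max-convolution power and reduce to an elementary limit as $\overline{F}\to 0$. The only cosmetic difference is that for part~(1) the paper writes $1-F^n=n\overline{F}(1+o(1))$ in one line rather than spelling out the binomial expansion, and your remark on the degenerate bounded-support case is a helpful addition not made explicit in the paper.
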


\begin{proof}
(1) As $x\rightarrow\infty$, we have $F\rightarrow1$. Therefore
\begin{align*}
\overline{F^{\lor n}}=1-F^n=n\overline{F}(1+o(1)), \qquad \text{ as } x\rightarrow\infty. 
\end{align*}
Hence we have $\overline{F^{\lor n}}\sim n\overline{F}$ as $x\rightarrow \infty$.

(2) We have
\begin{align*}
\overline{F^{\Box \hspace{-.55em} \lor n} }=1- \max\{ nF-(n-1), 0\}=\min \{n(1-F), 1\}.
\end{align*}
As $x\rightarrow \infty$, we may consider $n(1-F)<1$. Hence we have $\overline{F^{\Box \hspace{-.55em} \lor n} }\sim n\overline{F}$ as $x\rightarrow\infty$.

(3) We have
\begin{align*}
\overline{F^{\cup \hspace{-.50em} \lor n}}= 1-\frac{F}{n-(n-1)F}=\frac{n(1-F)}{n-(n-1)F}.
\end{align*}
Since $n-(n-1)F\rightarrow 1$ as $x\rightarrow\infty$, we have $\overline{F^{\cup \hspace{-.50em} \lor n}}\sim n\overline{F}$ as $x\rightarrow\infty$.
\end{proof}

In the same way to prove Proposition \ref{thm:tail}, it is easy to get a generalization of the above proposition as follows.

\begin{corollary}\label{cor:tF}
We have the following conditions.
\begin{enumerate}[(1)]
\item Let $F$ be a distribution function and $t>0$. Then $\overline{F^{\lor t}}\sim t\overline{F}$ as $x\rightarrow\infty$. 
\item Let $F$ be a distribution function and $t\ge 1$. Then $\overline{F^{\Box \hspace{-.55em} \lor t} }\sim t\overline{F}$ as $x\rightarrow\infty$.
\item Let $F\in\Delta_+$ and $t>0$. Then $\overline{F^{\cup \hspace{-.50em} \lor t}}\sim t\overline{F}$ as $x\rightarrow\infty$.
\end{enumerate}
\end{corollary}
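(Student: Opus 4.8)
The plan is to reuse the proof of Proposition~\ref{thm:tail} almost verbatim, the only change being that the integer $n$ is replaced throughout by the real parameter $t$. The essential point is that each of the three closed forms exploited there has a real-parameter analogue already recorded in the text: for $t>0$ one has $F^{\lor t}=F^{t}$, for $t\ge 1$ one has $F^{\Box \hspace{-.55em}\lor t}(x)=(tF(x)-(t-1))_+$, and for $t>0$ with $F\in\Delta_+$ one has $F^{\cup \hspace{-.50em}\lor t}=F/(t-(t-1)F)$. In every case the common engine is that $F(x)\to 1$, equivalently $\overline{F}(x)\to 0$, as $x\to\infty$.

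First I would dispose of the free and Boolean cases, which are purely algebraic and require no change from the integer argument. For the free convolution, $\overline{F^{\Box \hspace{-.55em}\lor t}}=1-(tF-(t-1))_+=\min\{t\overline{F},1\}$; since $\overline{F}(x)\to 0$, for all sufficiently large $x$ we have $t\overline{F}(x)<1$, so $\overline{F^{\Box \hspace{-.55em}\lor t}}=t\overline{F}$ eventually and the equivalence is immediate. For the Boolean convolution, a direct computation gives $\overline{F^{\cup \hspace{-.50em}\lor t}}=t\overline{F}/(t-(t-1)F)$, and since the denominator tends to $t-(t-1)=1$ as $x\to\infty$, we again obtain $\overline{F^{\cup \hspace{-.50em}\lor t}}\sim t\overline{F}$.

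The only case needing a genuinely new step is the classical one, because for non-integer $t$ the factorization $1-F^{n}=\overline{F}(1+F+\dots+F^{n-1})$ is unavailable. Here I would instead expand $1-F^{t}$ near $F=1$: writing $F=1-\overline{F}$ and using $F^{t}=\exp(t\log(1-\overline{F}))=1-t\overline{F}+o(\overline{F})$ as $\overline{F}\to 0$, one gets $\overline{F^{\lor t}}=1-F^{t}=t\overline{F}(1+o(1))$, hence $\overline{F^{\lor t}}\sim t\overline{F}$ as $x\to\infty$. I do not expect any real obstacle; the statement is the exact real-parameter version of Proposition~\ref{thm:tail}, and the mild point is simply replacing the finite geometric-sum identity of the integer classical case by the first-order Taylor expansion of $u\mapsto u^{t}$ at $u=1$.
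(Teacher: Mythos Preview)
Your proposal is correct and follows exactly the approach the paper intends: the paper does not spell out a proof of Corollary~\ref{cor:tF} but simply remarks that it follows ``in the same way to prove Proposition~\ref{thm:tail}'', and your argument carries out precisely this extension, replacing $n$ by $t$ in each of the three closed formulas. Your observation that the classical case for non-integer $t$ requires the first-order expansion $1-(1-\overline{F})^t = t\overline{F}(1+o(1))$ in place of a geometric-sum factorization is the only genuinely new ingredient, and it is exactly what is needed.
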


In particular, we get a relation between three type max-convolutions and regularly varying functions by using Corollary \ref{cor:tF}. Note that the following relation has already been obtained in \cite[Proposition 2.4]{CH18}  in the case when $t$ is a positive integer.

\begin{corollary}\label{cor:F,r.v.a}
Consider $\alpha \in\mathbb{R}$. The following conditions are equivalent.
\begin{enumerate}[(1)]
\item $\overline{F}$ is regularly varying of index $\alpha$;
\item $\overline{F^{\lor t}}$ is regularly varying of index $\alpha$ for some (any) $t>0$;
\item $\overline{F^{\Box \hspace{-.55em} \lor t} }$ is regularly varying of index $\alpha$ for some (any) $t\ge 1$;
\item $\overline{F^{\cup \hspace{-.50em} \lor t}}$ is regularly varying of index $\alpha$ for some (any) $t>0$.
\end{enumerate}
\end{corollary}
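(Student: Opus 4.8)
The plan is to reduce the entire corollary to a single elementary fact about regular variation, namely: if two eventually positive functions $f$ and $g$ satisfy $f\sim c\,g$ as $x\to\infty$ for some constant $c>0$, then $f$ is regularly varying of index $\alpha$ if and only if $g$ is. Granting this, the result is immediate from Corollary \ref{cor:tF}. For each admissible parameter (any $t>0$ in cases (2) and (4), any $t\ge1$ in case (3)), Corollary \ref{cor:tF} provides the asymptotic equivalences $\overline{F^{\lor t}}\sim t\overline{F}$, $\overline{F^{\Box \hspace{-.55em} \lor t}}\sim t\overline{F}$ and $\overline{F^{\cup \hspace{-.50em} \lor t}}\sim t\overline{F}$, each with positive constant $c=t$. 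Applying the preservation fact with $g=\overline{F}$ shows that every one of the three tail functions is regularly varying of index $\alpha$ exactly when $\overline{F}$ is. This gives all the claimed equivalences at once, and it also settles the ``for some (any) $t$'' clause: since the criterion each $t$ produces is simply equivalence with condition (1), validity for one value of $t$ forces (1), which in turn forces validity for every value of $t$.

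First I would record and verify the preservation fact, which is the only computational ingredient. Assuming $f\sim c\,g$ with $c>0$, for an arbitrary scaling factor $s>0$ I would write
\[
\frac{f(sx)}{f(x)}=\frac{f(sx)}{c\,g(sx)}\cdot\frac{g(sx)}{g(x)}\cdot\frac{c\,g(x)}{f(x)}.
\]
As $x\to\infty$ the first factor tends to $1$ (using $sx\to\infty$) and the third factor tends to $1$, so $f(sx)/f(x)\to s^{\alpha}$ if and only if $g(sx)/g(x)\to s^{\alpha}$. Since this holds for every $s>0$ and the roles of $f$ and $g$ are symmetric, $f$ and $g$ are regularly varying of the same index, establishing the fact.

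The only point needing care is the eventual positivity required to form the ratios in the definition of regular variation. If $\overline{F}$ vanishes for all large $x$, that is, $F$ has bounded support, then all four tail functions are eventually zero and regular variation is degenerate; I would either exclude this trivial case or, as is standard, read the definition as applying only to eventually positive tails. Otherwise $\overline{F}(x)>0$ for all $x$, and Corollary \ref{cor:tF} guarantees the same positivity for the three convolution powers, so every ratio above is well defined for large $x$ and the argument goes through verbatim. I do not expect any genuine obstacle here: the whole content of the corollary is the transfer of regular variation across the asymptotic equivalences already secured in Corollary \ref{cor:tF}, and that transfer is exactly what the preservation fact delivers.
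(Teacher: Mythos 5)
Your proposal is correct and is exactly the argument the paper intends: the corollary is stated as an immediate consequence of Corollary \ref{cor:tF}, via the standard fact that regular variation is preserved under asymptotic equivalence up to a positive constant, which is precisely the reduction you carry out (and verify). Your explicit three-factor computation and the remark on eventual positivity only make explicit what the paper leaves implicit.
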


\section{Max-Belinschi-Nica semigroup}
\label{sec:12}

In \cite[Proposition 3.1]{BN08}, there is a relation between free (additive) convolution and Boolean (additive) convolution, that is, for $p\ge1$ and $q>1-1/p$,
\begin{align*}
(\mu^{\boxplus p})^{\uplus q}=(\mu^{\uplus q'})^{\boxplus p'}, \qquad \mu\in\mathcal{P},
\end{align*}
where 
\begin{align}\label{pq}
\begin{cases}
p'=pq/(1-p+pq)\\
q'=1-p+pq,
\end{cases}  \text{ equivalently, }\hspace{2mm}
\begin{cases}
p=1-q'+p'q'\\
q=p'q'/(1-q'+p'q').
\end{cases}
\end{align}
Note that $p'\ge1$ and $q'>0$. By using free and Boolean max-convolutions, we obtain a completely analogue of the above relation.

\begin{proposition}\label{thm:m-fb-convolution}
For any $F\in\Delta_+$, $p\ge1$ and $q>1-1/p$, we have
\begin{align*}
(F^{\Box \hspace{-.55em}\lor p})^{\cup \hspace{-.50em}\lor q}=(F^{\cup \hspace{-.50em}\lor q'})^{\Box \hspace{-.55em}\lor p'},
\end{align*}
where $p',q'$ satisfy the equation \eqref{pq}. 
\end{proposition}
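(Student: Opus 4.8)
The plan is to observe that every operation in the statement acts pointwise on the values of the distribution function, so the whole identity collapses to a single scalar identity on $[0,1]$, which I can then settle by elementary rational-function algebra using only the explicit power formulas and the relations in \eqref{pq}. Concretely, I would fix $x\in\mathbb{R}$, set $u:=F(x)\in[0,1]$, and reduce the claim to
\[
(u^{\Box \hspace{-.55em}\lor p})^{\cup \hspace{-.50em}\lor q}=(u^{\cup \hspace{-.50em}\lor q'})^{\Box \hspace{-.55em}\lor p'},\qquad u\in[0,1],
\]
where I use the scalar formulas $u^{\Box \hspace{-.55em}\lor p}=(pu-(p-1))_+$ (valid for $p\ge 1$) and $u^{\cup \hspace{-.50em}\lor q}=u/(q-(q-1)u)$ (coming from $F^{\cup \hspace{-.50em}\lor t}=F/(t-(t-1)F)$, valid for $q>0$). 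Before computing I would record admissibility: the hypotheses $p\ge1$ and $q>1-1/p$ force $q'=1-p+pq>0$ and $p'=pq/q'\ge1$, so the two powers on the right are legitimately defined, and the Boolean denominators $q-(q-1)v$ and $q'-(q'-1)u$ stay strictly positive for arguments in $[0,1]$.

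Next I would extract the arithmetic consequences of \eqref{pq} that drive everything. Multiplying the relations out gives $pq=p'q'$, and hence
\[
(p'-1)q'=p'q'-q'=pq-q'=p-1,\qquad p(q-1)=pq-p=p'q'-p=q'-1 .
\]
The identity $(p'-1)q'=p-1$ is what pins down the truncation caused by the positive part. On the left-hand side the outer $\cup \hspace{-.50em}\lor$-power is applied to $u^{\Box \hspace{-.55em}\lor p}$, which vanishes exactly for $u\le 1-1/p$. On the right-hand side the outer $\Box \hspace{-.55em}\lor p'$-power vanishes exactly when $u^{\cup \hspace{-.50em}\lor q'}\le (p'-1)/p'$; clearing the positive denominator reduces this to $u\,\bigl(p'+(p'-1)(q'-1)\bigr)\le (p'-1)q'$, i.e. $pu\le p-1$ (using $p=1-q'+p'q'$ and $(p'-1)q'=p-1$), which is again $u\le 1-1/p$. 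Thus both sides vanish on exactly the same sub-threshold region.

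On the complementary region $u>1-1/p$ I would evaluate both sides as honest rational functions: the left becomes $\bigl(pu-(p-1)\bigr)/\bigl(q-(q-1)(pu-(p-1))\bigr)$ and the right becomes $p'\,u/(q'-(q'-1)u)-(p'-1)$. Both numerators collapse to $pu-(p-1)$ by the coefficient computation $p'+(p'-1)(q'-1)=1-q'+p'q'=p$ together with $(p'-1)q'=p-1$, and both denominators collapse to $1+(q'-1)(1-u)$ precisely because $p(q-1)=q'-1$. Hence the two expressions agree, which finishes the scalar identity and therefore the proposition.

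I expect the genuinely delicate step to be the threshold bookkeeping of the second paragraph rather than the algebra of the third: one must verify that the free-max positive-part truncation switches on at the \emph{same} value of $u$ on both sides, and this is exactly the content of the relation $(p'-1)q'=p-1$. Once that alignment is confirmed, the remaining verification is routine manipulation of rational functions, guaranteed to succeed by $pq=p'q'$ and its two corollaries.
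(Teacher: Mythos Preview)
Your proposal is correct and follows essentially the same route as the paper: both reduce the identity to a pointwise computation with the explicit formulas $u^{\Box\hspace{-.55em}\lor p}=(pu-(p-1))_+$ and $u^{\cup\hspace{-.50em}\lor q}=u/(q-(q-1)u)$, and both rely on the algebraic consequences of \eqref{pq} (in particular $(p'-1)q'=p-1$ and $1-q'+p'q'=p$). The only organizational difference is that the paper keeps the positive part outside throughout, showing directly that each side equals $\bigl(\tfrac{pF-(p-1)}{q+(1-q)(pF-(p-1))}\bigr)_+$ after first checking that this denominator is positive, whereas you split into the sub-threshold region $u\le 1-1/p$ and its complement and compare on each; this is the same content packaged slightly differently.
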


\begin{proof}
Note that for any $F\in\Delta_+$, $p\ge 1$ and $q>1-1/p$,
\begin{align*}
q+(1-q)(pF-(p-1))&=p(q-1)(1-F)+1\\
&>p\left(1-\frac{1}{p}-1 \right)(1-F)+1=F\ge0.
\end{align*}
Therefore, for any $F\in\Delta_+$, $p\ge1$ and $q>1-1/p$, we have
\begin{align*}
(F^{\Box \hspace{-.55em}\lor p})^{\cup \hspace{-.50em}\lor q}&=\frac{F^{\Box \hspace{-.55em}\lor p}}{q+(1-q)F^{\Box \hspace{-.55em}\lor p}}=\frac{(pF-(p-1))_+}{q+(1-q)(pF-(p-1))_+}\\
&=\begin{cases}
\frac{pF-(p-1)}{q+(1-q)(pF-(p-1))}, & \text{ if } pF-(p-1)\ge 0\\
0, & \text{ if } pF-(p-1)<0
\end{cases}\\
&=\left( \frac{pF-(p-1)}{q+(1-q)(pF-(p-1))}\right)_+.
\end{align*}
On the other hand, we have
\begin{align*}
(F^{\cup \hspace{-.50em}\lor q'})^{\Box \hspace{-.55em}\lor p'}&=\left(\frac{F}{q'+(1-q')F}\right)^{\Box \hspace{-.55em}\lor p'}=\left(\frac{p'F}{q'+(1-q')F}-(p'-1)\right)_+\\
&=\left(\frac{(1-q'+p'q')F-(p'-1)q'}{q'+(1-q')F}\right)_+\\
&=\left(\frac{pF-(p-1)}{q+(1-q)(pF-(p-1))}\right)_+,
\end{align*}
where the last equation holds by using \eqref{pq}. Thus the formula of this theorem holds.
\end{proof}

Recall the Belinschi-Nica semigroup in Section 1. Similarly, we define a Belinschi-Nica type semigroup for free and Boolean max-convolutions.

\begin{proposition}\label{thm:MBNs}
For each $t\ge0$, we define a map ${\bf B}_t^M:\Delta_+\rightarrow \Delta_+$ as
\begin{align*}
{\bf B}_t^M(F):=\left(F^{\Box \hspace{-.55em}\lor (1+t)}\right)^{\cup \hspace{-.50em}\lor \frac{1}{1+t}},\qquad  F\in\Delta_+.
\end{align*}
The family $\{{\bf B}_t^M\}_{t\ge0}$ is a semigroup with respect to composition and $\alpha({\bf B}_t^M(F))=\alpha(F^{\Box \hspace{-.55em}\lor (1+t)})$. In particular, $\mathbf{B}_t^M(F)$ is free regular max-infinitely divisible for any $F\in \Delta_+^{(0)}$ and $t>0$.  
\end{proposition}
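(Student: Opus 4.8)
The plan is to establish the three assertions in order---support identity, semigroup law, then free regular max-infinite divisibility---with the last dropping out of the first. For the support identity I would observe that a Boolean max-power never moves $\alpha$: for $G\in\Delta_+$ and $s>0$ we have $G^{\cup \hspace{-.50em}\lor s}=\frac{G}{s-(s-1)G}$, whose denominator $s(1-G)+G$ is bounded below by $\min\{s,1\}>0$ for $G\in[0,1]$, so $G^{\cup \hspace{-.50em}\lor s}(x)=0$ exactly when $G(x)=0$ and hence $\alpha(G^{\cup \hspace{-.50em}\lor s})=\alpha(G)$. Applying this with $G=F^{\Box \hspace{-.55em}\lor (1+t)}$ and $s=\frac{1}{1+t}$ yields $\alpha(\mathbf{B}_t^M(F))=\alpha(F^{\Box \hspace{-.55em}\lor (1+t)})$ at once.

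Next, for the semigroup law I would prove $\mathbf{B}_t^M\circ\mathbf{B}_s^M=\mathbf{B}_{t+s}^M$, mirroring the classical Belinschi--Nica argument. I would introduce the two-parameter transformation $T_{p,q}(F):=(F^{\Box \hspace{-.55em}\lor p})^{\cup \hspace{-.50em}\lor q}$, so that $\mathbf{B}_t^M=T_{1+t,\frac{1}{1+t}}$, and expand
\begin{align*}
T_{p_1,q_1}(T_{p_2,q_2}(F))=\left(\left(\left(F^{\Box \hspace{-.55em}\lor p_2}\right)^{\cup \hspace{-.50em}\lor q_2}\right)^{\Box \hspace{-.55em}\lor p_1}\right)^{\cup \hspace{-.50em}\lor q_1}.
\end{align*}
The crucial step is to apply the commutation relation of Proposition \ref{thm:m-fb-convolution} to the inner Boolean-then-free block, rewriting it in free-then-Boolean form as $(F^{\Box \hspace{-.55em}\lor p_2})^{\Box \hspace{-.55em}\lor \tilde p}$ raised to the Boolean power $\tilde q$, where $\tilde p=1-q_2+p_1q_2$ and $\tilde q=p_1q_2/(1-q_2+p_1q_2)$ come from \eqref{pq}. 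Then Proposition \ref{prop:F_t} collapses $(F^{\Box \hspace{-.55em}\lor p_2})^{\Box \hspace{-.55em}\lor \tilde p}=F^{\Box \hspace{-.55em}\lor p_2\tilde p}$, and Proposition \ref{prop:B_t} collapses the two successive Boolean powers, giving $T_{p_1,q_1}\circ T_{p_2,q_2}=T_{p_2\tilde p,\,\tilde q q_1}$. Specializing $p_1,p_2=1+t,1+s$ and $q_1,q_2=\frac{1}{1+t},\frac{1}{1+s}$, a short computation gives $p_2\tilde p=1+s+t$ and $\tilde q q_1=\frac{1}{1+s+t}$, so the composite is the Belinschi--Nica map at time $s+t$; together with $\mathbf{B}_0^M=\mathrm{id}$ this proves the semigroup property.

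Finally, free regular max-infinite divisibility follows by combining the support identity with the characterization recalled in Section \ref{sec:6}: a distribution function $H\in\Delta_+^{(0)}$ is free regular max-infinitely divisible if and only if $\alpha(H)>0$. For $F\in\Delta_+^{(0)}$ we have $F(0)=0$, and since $t>0$ forces $1-\frac{1}{1+t}>0$, right-continuity of $F$ at $0$ gives $\delta>0$ with $F<1-\frac{1}{1+t}$ on $[0,\delta)$; hence $\alpha(\mathbf{B}_t^M(F))=\alpha(F^{\Box \hspace{-.55em}\lor(1+t)})=\sup\{x:F(x)\le 1-\frac{1}{1+t}\}\ge\delta>0$, which places $\mathbf{B}_t^M(F)$ in $\Delta_+^{(0)}$ and makes it free regular max-infinitely divisible.

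The main obstacle is not conceptual but a matter of parameter bookkeeping: I must confirm that every exponent produced above stays inside the admissibility range of the result it invokes---$p\ge 1$ and $q>1-1/p$ for Proposition \ref{thm:m-fb-convolution}, exponents $\ge 1$ for Proposition \ref{prop:F_t}, and exponents $\ge 0$ for Proposition \ref{prop:B_t}. For $t,s\ge 0$ I would verify that $\tilde p=\frac{1+s+t}{1+s}\ge 1$, that $\tilde q=\frac{1+t}{1+s+t}>\frac{t}{1+s+t}=1-\frac{1}{\tilde p}$, and that all Boolean exponents lie in $(0,1]$, so that each cited result genuinely applies throughout the computation.
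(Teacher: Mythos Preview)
Your proposal is correct and follows essentially the same route as the paper: both use the commutation relation of Proposition~\ref{thm:m-fb-convolution} to swap the inner Boolean-then-free block into free-then-Boolean form, then collapse the iterated powers via Propositions~\ref{prop:F_t} and~\ref{prop:B_t}; both obtain the support identity from the observation $\alpha(G^{\cup\hspace{-.50em}\lor s})=\alpha(G)$, and both deduce free regular max-infinite divisibility from right-continuity of $F$ at $0$ together with the criterion $\alpha(H)>0$ from Section~\ref{sec:6}. Your version differs only cosmetically---you introduce the auxiliary notation $T_{p,q}$, reverse the order of the three assertions, and add the explicit parameter-range checks that the paper leaves implicit.
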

\begin{proof}
Using Propositions \ref{prop:F_t}, \ref{prop:B_t} and \ref{thm:m-fb-convolution}, for any $F\in\Delta_+$, we have
\begin{align*}
{\bf B}_s^M\circ {\bf B}_t^M(F)&={\bf B}_s\left( (F^{\Box \hspace{-.55em}\lor (1+t)})^{\cup \hspace{-.50em}\lor \frac{1}{1+t}}\right)=\left[\left((F^{\Box \hspace{-.55em}\lor (1+t)})^{\cup \hspace{-.50em}\lor \frac{1}{1+t}}\right)^{\Box \hspace{-.55em}\lor (1+s)}\right]^{\cup \hspace{-.50em}\lor \frac{1}{1+s}}\\
&=\left[\left( (F^{\Box \hspace{-.55em}\lor (1+t)})^{\Box \hspace{-.50em}\lor \frac{1+t+s}{1+t}}\right)^{\cup \hspace{-.50em}\lor \frac{1+s}{1+t+s}}\right]^{\cup \hspace{-.50em}\lor \frac{1}{1+s}}=\left(F^{\Box \hspace{-.55em}\lor (1+t+s)}\right)^{\cup \hspace{-.50em}\lor \frac{1}{1+t+s}}={\bf B}_{t+s}^M(F).
\end{align*}

By using the fact $\alpha(F^{\cup \hspace{-.50em}\lor t})=\alpha(F)$ for every $t>0$, we have $\alpha({\bf B}_t^M(F))=\alpha(F^{\Box \hspace{-.55em}\lor (1+t)})$.

If $F(0)=0$, then $\mathbf{B}_t^M(F(0))=\mathbf{B}_t^M(0)=0$. Hence $\mathbf{B}_t^M(\Delta_+^{(0)})\subset \Delta_+^{(0)}$ holds. For any $t>0$, we have $\alpha(F^{\Box \hspace{-.55em} \lor(1+t)})>0$ by right continuity of $F$ and $F(0)=0$. Therefore $\mathbf{B}_t^M(F)$ is free regular max-infinitely divisible for any $F\in \Delta_+^{(0)}$.
\end{proof}

\begin{definition}
The semigroup $\{{\bf B}_t^M\}_{t\ge0}$ is called the {\it max-Belinschi-Nica semigroup}. For each $t\ge 0$, the map ${\bf B}_t^M$ is called the {\it max-Belinschi-Nica map at time $t$}.
\end{definition}

In \cite[Theorem 2.5]{CH18}, we know a relation between behavior at tails of $\mu\in\mathcal{P}$ and one of $\mathbf{B}_t(\mu)$. In the max-case, by applying Corollary \ref{cor:F,r.v.a}, we can conclude that the map $\mathbf{B}_t^M$ preserves behavior at tails of distribution functions.

\begin{corollary}
Let $F\in \Delta_+$ and $t\ge 0$. Then $\overline{\mathbf{B}_t^M(F)}\sim \overline{F}$ as $x\rightarrow\infty$.
\end{corollary}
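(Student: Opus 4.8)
The plan is to reduce the claim to two applications of Corollary~\ref{cor:tF}, using the fact that $\mathbf{B}_t^M$ is the composition of a free max-convolution power with a Boolean max-convolution power. First I would dispose of the case $t=0$, where $\mathbf{B}_0^M(F)=(F^{\Box \hspace{-.55em}\lor 1})^{\cup \hspace{-.50em}\lor 1}=F$ and there is nothing to prove. For $t>0$ I would set $G:=F^{\Box \hspace{-.55em}\lor (1+t)}$ and note that $G\in\Delta_+$, since $F\in\Delta_+$ and $1+t\ge1$; this is precisely what allows me to feed $G$ into the Boolean statement of Corollary~\ref{cor:tF}.

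The steps, in order, are as follows. Applying Corollary~\ref{cor:tF}(2) with exponent $1+t\ge1$ gives $\overline{G}\sim(1+t)\overline{F}$ as $x\to\infty$. Since $\mathbf{B}_t^M(F)=G^{\cup \hspace{-.50em}\lor \frac{1}{1+t}}$ with $\frac{1}{1+t}>0$, Corollary~\ref{cor:tF}(3) yields $\overline{\mathbf{B}_t^M(F)}\sim\frac{1}{1+t}\,\overline{G}$ as $x\to\infty$. Finally I would chain these two equivalences by writing
\[
\frac{\overline{\mathbf{B}_t^M(F)}}{\overline{F}}=\frac{\overline{\mathbf{B}_t^M(F)}}{\frac{1}{1+t}\overline{G}}\cdot\frac{\frac{1}{1+t}\overline{G}}{\overline{F}},
\]
where the first factor tends to $1$ by the Boolean step and the second factor equals $\frac{1}{1+t}\cdot\frac{\overline{G}}{\overline{F}}\to\frac{1}{1+t}(1+t)=1$ by the free step. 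Hence the product tends to $1$, and the constants $\frac{1}{1+t}$ and $1+t$ cancel to give $\overline{\mathbf{B}_t^M(F)}\sim\overline{F}$.

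The main obstacle: honestly there is no deep difficulty once Corollary~\ref{cor:tF} is available, since the purpose of that corollary was exactly to make this kind of computation routine. The only points needing care are verifying that the intermediate object $G=F^{\Box \hspace{-.55em}\lor (1+t)}$ genuinely lies in $\Delta_+$ (so the Boolean part of the corollary applies) and that the two exponents $1+t$ and $\frac{1}{1+t}$ fall in the admissible ranges of parts (2) and (3). A minor caveat is the case where $F$ has bounded support, so that $\overline{F}$ and $\overline{\mathbf{B}_t^M(F)}$ vanish identically for large $x$; this is already subsumed in the proof of Corollary~\ref{cor:tF} and requires no separate treatment here.
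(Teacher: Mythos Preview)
Your proof is correct and follows exactly the route the paper intends: the paper states this corollary without proof, simply noting it follows from the preceding tail results, and your argument makes this explicit by chaining Corollary~\ref{cor:tF}(2) and (3) so that the factors $1+t$ and $\tfrac{1}{1+t}$ cancel. (The paper's lead-in sentence cites Corollary~\ref{cor:F,r.v.a}, but the actual asymptotic equivalence $\overline{\mathbf{B}_t^M(F)}\sim\overline{F}$ comes from Corollary~\ref{cor:tF}, as you use.)
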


Set $\Theta_+:=\Lambda^\lor (\Delta_+)\subset \Delta_+$. Recall that $\Lambda^\lor (\Phi_\alpha)=\mathbf{P}_\alpha$, where $\Phi_\alpha$ and $\mathbf{P}_\alpha$ are the Fr\'{e}chet distribution and the Pareto distribution, respectively. We show that the Belinschi-Nica map $\mathbf{B}_1^M$ has rich properties as follow.

\begin{proposition}\label{prop:B_1}
The following conditions hold:
\begin{enumerate}[(1)]
\item We have ${\bf B}_1^M=\Lambda^{\lor}\circ \mathcal{X}$, where the map $\mathcal{X}$ was defined Section 4.1.
\item ${\bf B}_1^M$ is a surjection from $\Delta_+$ to $\Theta_+$. 
\item We have ${\bf B}_1^M({\bf D_\alpha})={\bf P_\alpha}$ for any $\alpha>0$.
\item We have ${\bf B}_1^M(F\cup \hspace{-.80em}\lor G)={\bf B}_1^M(F)\Box \hspace{-.85em}\lor  {\bf B}_1^M(G)$ and ${\bf B}_1^M(F^{\cup \hspace{-.50em}\lor t})={\bf B}_1^M(F)^{\Box \hspace{-.55em}\lor t}$ for any $F,G\in \Delta_+$ and $t\ge 1$.
\end{enumerate}
\end{proposition}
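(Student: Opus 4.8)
The plan is to establish item (1) first, since the other three items all follow quickly from the factorization $\mathbf{B}_1^M=\Lambda^\lor\circ \mathcal{X}$ combined with the homomorphism properties of $\mathcal{X}$ and $\Lambda^\lor$ recorded in Sections~3.1 and~4.1. Because every operation in sight ($\Box \hspace{-.55em}\lor$-powers, $\cup \hspace{-.50em}\lor$-powers, $\Lambda^\lor$ and $\mathcal{X}$) acts pointwise on the values $F(x)\in[0,1]$, I would reduce (1) to the single scalar identity
\[
\left(u^{\Box \hspace{-.55em}\lor 2}\right)^{\cup \hspace{-.50em}\lor \frac{1}{2}}=\Lambda^\lor(\mathcal{X}(u)),\qquad u\in[0,1],
\]
and verify it directly. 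On the left, $u^{\Box \hspace{-.55em}\lor 2}=(2u-1)_+$, and the half Boolean-power sends $v\mapsto v/(\tfrac12+\tfrac{v}{2})=2v/(1+v)$; on the right, $\mathcal{X}(u)=\exp(1-u^{-1})$ followed by $\Lambda^\lor(w)=(1+\log w)_+$ gives $(2-u^{-1})_+$. A short case split at $u=\tfrac12$, where the truncation $(\cdot)_+$ switches on, shows both sides equal $2-u^{-1}$ for $u\ge\tfrac12$ and vanish for $u<\tfrac12$, the degenerate value $u=0$ being sent to $0$ on both sides. Evaluating at $u=F(x)$ and letting $x$ range over $\mathbb{R}$ then yields $\mathbf{B}_1^M(F)=\Lambda^\lor(\mathcal{X}(F))$.

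Granting (1), item (4) is immediate from the two homomorphism identities: $\mathcal{X}$ carries $\cup \hspace{-.50em}\lor$ to ordinary multiplication and $\cup \hspace{-.50em}\lor$-powers to ordinary powers, while $\Lambda^\lor$ carries multiplication to $\Box \hspace{-.88em}\lor$ and ordinary powers (for exponent $\ge1$) to $\Box \hspace{-.55em}\lor$-powers. Thus
\[
\mathbf{B}_1^M(F\cup \hspace{-.88em}\lor G)=\Lambda^\lor(\mathcal{X}(F)\mathcal{X}(G))=\Lambda^\lor(\mathcal{X}(F))\,\Box \hspace{-.88em}\lor\,\Lambda^\lor(\mathcal{X}(G))=\mathbf{B}_1^M(F)\,\Box \hspace{-.88em}\lor\,\mathbf{B}_1^M(G),
\]
and likewise $\mathbf{B}_1^M(F^{\cup \hspace{-.50em}\lor t})=\Lambda^\lor(\mathcal{X}(F)^t)=\Lambda^\lor(\mathcal{X}(F))^{\Box \hspace{-.55em}\lor t}=\mathbf{B}_1^M(F)^{\Box \hspace{-.55em}\lor t}$ for $t\ge1$, using $\mathcal{X}(F^{\cup \hspace{-.50em}\lor t})=\mathcal{X}(F)^t$ by definition of the $\cup \hspace{-.50em}\lor$-power. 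For (2), I would note that $\mathcal{X}\colon[0,1]\to[0,1]$ is a continuous increasing bijection, so applied pointwise it is a bijection of $\Delta_+$ onto itself with pointwise inverse $\mathcal{X}^{-1}$; hence $\mathbf{B}_1^M(\Delta_+)=\Lambda^\lor(\mathcal{X}(\Delta_+))=\Lambda^\lor(\Delta_+)=\Theta_+$, which is exactly surjectivity onto $\Theta_+$. For (3), using $\mathcal{X}(\mathbf{D}_\alpha)=\Phi_\alpha$ (Theorem~\ref{lem:b-max-stable} with $\lambda=1$) and the already-recalled relation $\Lambda^\lor(\Phi_\alpha)=\mathbf{P}_\alpha$ gives $\mathbf{B}_1^M(\mathbf{D}_\alpha)=\Lambda^\lor(\Phi_\alpha)=\mathbf{P}_\alpha$.

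The only genuinely computational step is (1), and the main thing to watch is the bookkeeping around the truncation $(\cdot)_+$: one must confirm that the threshold $u\ge\tfrac12$ governing $u^{\Box \hspace{-.55em}\lor 2}>0$ matches exactly the region $2-u^{-1}\ge0$ on the composed $\Lambda^\lor\circ\mathcal{X}$ side, and that the boundary value $u=0$ (equivalently $x<\alpha(F)$) is consistently sent to $0$. Once this scalar identity is verified, no further analytic input is needed, since weak continuity and the distribution-function properties of all the maps involved were established earlier in the paper.
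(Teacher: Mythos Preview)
Your proposal is correct and follows essentially the same approach as the paper: both establish the scalar identity $\mathbf{B}_1^M(u)=(2-u^{-1})_+=\Lambda^\lor(\mathcal{X}(u))$ by direct computation with the case split at $u=\tfrac12$, and then derive (3) and (4) from the homomorphism properties of $\mathcal{X}$ and $\Lambda^\lor$ exactly as you do. The one minor difference is in (2): the paper constructs an explicit preimage $F=1/(1-\log H)=\mathcal{X}^{-1}(H)$ for a given $G=\Lambda^\lor(H)\in\Theta_+$ and verifies $\mathbf{B}_1^M(F)=G$ by hand, whereas your argument that $\mathcal{X}$ is a pointwise bijection of $\Delta_+$ and hence $\mathbf{B}_1^M(\Delta_+)=\Lambda^\lor(\Delta_+)=\Theta_+$ is slightly cleaner and avoids that computation.
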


\begin{proof}
(1) For all $F\in\Delta_+$, we have that
\begin{align*}
{\bf B}_1^M(F)&=\frac{(2F-1)_+}{1/2+(1/2)(2F-1)_+}=\begin{cases}
2-\frac{1}{F}, & \text{ if } 1/2\le F\le 1\\
0, & \text{ if } 0\le F <1/2
\end{cases}\\
&=\left(2-\frac{1}{F}\right)_+=(1+\log\mathcal{X}(F))_+=\Lambda^{\lor}(\mathcal{X}(F)).
\end{align*}

(2) By (1), the map $\mathbf{B}_1^M$ takes values in $\Theta_+$. For any $G\in \Theta_+$, there is $H\in\Delta_+$ such that $G=(1+\log H)_+$, where the reader understands $\log H(x)=-\infty$ if $H(x)=0$ for $x\in \mathbb{R}$. Put $F:=1/(1-\log H)\in \Delta_+$. Then we have
\begin{align*}
{\bf B}_1^M(F)&=\left( 2F-1\right)_+^{\cup \hspace{-.50em}\lor \frac{1}{2}}=\left(\frac{1+ \log H}{1-\log H}\right)_+^{\cup \hspace{-.50em}\lor \frac{1}{2}}\\
&=\frac{\left(\frac{1+ \log H}{1-\log H}\right)_+}{\frac{1}{2}+\frac{1}{2}\times \left(\frac{1+ \log H}{1-\log H}\right)_+}=(1+\log H)_+=G.
\end{align*}
Hence ${\bf B}_1^M$ is surjective from $\Delta_+$ to $\Theta_+$. \\

(3) Note that $\mathcal{X}(\mathbf{D}_\alpha)=\Phi_\alpha$ (see Section 4.3). By (1), we have
\begin{align*}
\mathbf{B}_1^M(\mathbf{D}_\alpha)=\Lambda^\lor\circ \mathcal{X}(\mathbf{D}_\alpha)=\Lambda^\lor (\Phi_\alpha)=\mathbf{P}_\alpha.
\end{align*}

(4) By definition of $\Lambda^{\lor}$ and $\mathcal{X}$, we have
\begin{align*}
{\bf B}_1^M(F\cup \hspace{-.88em}\lor G)&=\Lambda^{\lor}(\mathcal{X}(F\cup \hspace{-.88em}\lor G))=\Lambda^{\lor}(\mathcal{X}(F)\mathcal{X}(G))\\
&=\Lambda^{\lor}(\mathcal{X}(F))\Box \hspace{-.95em}\lor \Lambda^{\lor}(\mathcal{X}(G))={\bf B}_1^M(F)\Box \hspace{-.95em}\lor  {\bf B}_1^M(G),
\end{align*}
and
\begin{align*}
{\bf B}_1^M(F^{\cup \hspace{-.50em}\lor t})=\Lambda^\lor (\mathcal{X}(F^{\cup \hspace{-.50em}\lor t}))=\Lambda^\lor (\mathcal{X}(F)^t)=\Lambda^\lor(\mathcal{X}(F))^{\Box\hspace{-.55em}\lor t}={\bf B}_1^M(F)^{\Box\hspace{-.55em}\lor t}
\end{align*}
for any $F,G\in\Delta_+$ and $t\ge 1$.
\end{proof}

In addition, we have $\mathbf{B}_1^M(\Delta_+)=\Theta_+$ by Proposition \ref{prop:B_1} (1) and (2). Finally, we conclude that the map ${\bf B}_1^M$ connects limit theorems for Boolean and freely max-infinitely divisible distributions.

\begin{theorem}\label{thm:limit}
Let $\{F_n\}_n$ be a sequence in $\Delta_+$ and $\{k_n\}_n$ a sequence of positive integers such that $k_1<k_2<\cdots$. If there exists $F\in\Delta_+$ such that $F_n^{\cup \hspace{-.52em}\lor k_n}\xrightarrow{w} F$, then $F_n^{\Box \hspace{-.55em}\lor k_n}\xrightarrow{w} {\bf B}_1^M(F)$ as $n\rightarrow \infty$.
\end{theorem}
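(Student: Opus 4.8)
The plan is to exploit the two convolution homomorphism properties of $\mathbf{B}_1^M$ established in Proposition \ref{prop:B_1}(4), namely that $\mathbf{B}_1^M$ turns Boolean max-convolution powers into free max-convolution powers, combined with the weak continuity of $\mathbf{B}_1^M$. The key observation is that the $k_n$-fold Boolean max-convolution power $F_n^{\cup\hspace{-.52em}\lor k_n}$ and the $k_n$-fold free max-convolution power $F_n^{\Box\hspace{-.55em}\lor k_n}$ are linked directly through the map $\mathbf{B}_1^M$, since $k_n\ge 1$ is an integer and the identity $\mathbf{B}_1^M(F^{\cup\hspace{-.50em}\lor t})=\mathbf{B}_1^M(F)^{\Box\hspace{-.55em}\lor t}$ holds for all $t\ge 1$.

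First I would apply $\mathbf{B}_1^M$ to the hypothesis. Since $\mathbf{B}_1^M=\Lambda^\lor\circ\mathcal{X}$ by Proposition \ref{prop:B_1}(1) and both $\mathcal{X}:\Delta_+\to\Delta_+$ and $\Lambda^\lor:\Delta\to\Delta$ are weakly continuous (as recorded in Sections 3.1 and 4.1), the composition $\mathbf{B}_1^M$ is weakly continuous on $\Delta_+$. Hence from $F_n^{\cup\hspace{-.52em}\lor k_n}\xrightarrow{w} F$ we may conclude
\begin{align*}
\mathbf{B}_1^M\!\left(F_n^{\cup\hspace{-.52em}\lor k_n}\right)\xrightarrow{w}\mathbf{B}_1^M(F),\qquad n\to\infty.
\end{align*}
Next I would rewrite the left-hand side using the homomorphism identity of Proposition \ref{prop:B_1}(4): because $k_n\ge 1$ we have $\mathbf{B}_1^M(F_n^{\cup\hspace{-.52em}\lor k_n})=\mathbf{B}_1^M(F_n)^{\Box\hspace{-.55em}\lor k_n}$. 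So the display above already reads $\mathbf{B}_1^M(F_n)^{\Box\hspace{-.55em}\lor k_n}\xrightarrow{w}\mathbf{B}_1^M(F)$.

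The remaining gap is that we want $F_n^{\Box\hspace{-.55em}\lor k_n}$ to converge, not $\mathbf{B}_1^M(F_n)^{\Box\hspace{-.55em}\lor k_n}$; these differ by the action of $\mathbf{B}_1^M$ on the base measure $F_n$ before taking the free power. The hard part will be showing that this discrepancy vanishes in the limit, i.e.\ that $F_n^{\Box\hspace{-.55em}\lor k_n}$ and $\mathbf{B}_1^M(F_n)^{\Box\hspace{-.55em}\lor k_n}$ are asymptotically weakly equivalent. I expect this to follow from an infinitesimality argument: convergence of $F_n^{\cup\hspace{-.52em}\lor k_n}$ to a distribution in $\Delta_+$ should force $F_n(x)\to 1$ pointwise on the relevant range (equivalently $\mathcal{X}(F_n)\to 1$), placing the values $F_n(x)$ and $\mathbf{B}_1^M(F_n)(x)=(2-1/F_n(x))_+$ close together as $n\to\infty$, since $2-1/u=u+O((1-u)^2)$ near $u=1$. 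One would track this through the explicit formula $F_n^{\Box\hspace{-.55em}\lor k_n}(x)=(k_nF_n(x)-(k_n-1))_+$ versus $(k_n\mathbf{B}_1^M(F_n)(x)-(k_n-1))_+$ and estimate the difference using $k_n\overline{F_n}(x)$ staying bounded, which is exactly the regime guaranteed by the tail behaviour in Proposition \ref{thm:tail} and Corollary \ref{cor:tF}. Once this asymptotic equivalence is in hand, it combines with the convergence $\mathbf{B}_1^M(F_n)^{\Box\hspace{-.55em}\lor k_n}\xrightarrow{w}\mathbf{B}_1^M(F)$ to yield $F_n^{\Box\hspace{-.55em}\lor k_n}\xrightarrow{w}\mathbf{B}_1^M(F)$, completing the argument.
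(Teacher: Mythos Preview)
Your approach is essentially the paper's: apply $\mathbf{B}_1^M$ to the hypothesis, use the homomorphism identity from Proposition~\ref{prop:B_1}(4) to rewrite as $\mathbf{B}_1^M(F_n)^{\Box\hspace{-.55em}\lor k_n}\xrightarrow{w}\mathbf{B}_1^M(F)$, and then show this limit coincides with $\lim_n F_n^{\Box\hspace{-.55em}\lor k_n}$ via the infinitesimality $F_n(x)\to 1$ and the first-order expansion $2-1/u=u+O((1-u)^2)$, which is exactly the paper's asymptotic $k_n(1-F_n(x)^{-1})\sim k_n(F_n(x)-1)$.

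Two small corrections to your sketch. First, the reference to Proposition~\ref{thm:tail} and Corollary~\ref{cor:tF} is misplaced: those results concern $x\to\infty$ for fixed convolution power, not $n\to\infty$ at fixed $x$. The boundedness of $k_n\overline{F_n}(x)$ that you need comes directly from the hypothesis $F_n^{\cup\hspace{-.50em}\lor k_n}(x)\to F(x)>0$, since $F_n^{\cup\hspace{-.50em}\lor k_n}(x)=F_n(x)/(k_n-(k_n-1)F_n(x))$ being bounded below forces $F_n(x)\to 1$ and $k_n(1-F_n(x))$ bounded. Second, your infinitesimality argument only covers continuity points $x$ with $F(x)>0$; you must handle $\mathcal{C}(F)\cap\{F=0\}$ separately. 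There $F_n(x)\to 1$ need not hold, but the paper shows that $F_n^{\cup\hspace{-.50em}\lor k_n}(x)\to 0$ forces $F_n(x)<k_n/(k_n+1)$ eventually, whence $k_nF_n(x)-(k_n-1)<1/(k_n+1)\to 0$, so $F_n^{\Box\hspace{-.55em}\lor k_n}(x)\to 0=\mathbf{B}_1^M(F)(x)$ as required.
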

\begin{proof} 
Denote by $\mathcal{C}(F)$ the set of all continuous points of $F$. Assume that $F_n^{\cup \hspace{-.52em}\lor k_n}\xrightarrow{w} F$ as $n\rightarrow\infty$. Since $\mathbf{B}_1^M(F_n)^{\Box \hspace{-.55em}\lor k_n}=\mathbf{B}_1^M(F_n^{\cup\hspace{-.52em}\lor k_n})\xrightarrow{w} \mathbf{B}_1^M(F)$ as $n\rightarrow\infty$ by our assumption and Proposition \ref{prop:B_1}, it suffices to show that
\begin{align}\label{key_thm7}
\lim_{n\rightarrow\infty}\mathbf{B}_1^M(F_n)^{\Box \hspace{-.55em}\lor k_n}(x)=\lim_{n\rightarrow\infty}F_n^{\Box \hspace{-.55em}\lor k_n}(x), \qquad x\in \mathcal{C}(F).
\end{align}
Consider $x\in \mathcal{C}(F)\cap \{F>0\}$. Our assumption implies that there is $\epsilon_x>0$ such that
\begin{align*}
\frac{F_n(x)}{k_n-(k_n-1)F_n(x)}>\epsilon_x, \hspace{1mm} \text{ equivalently, }\hspace{1mm} 
F_n(x)>\frac{\epsilon_xk_n}{\epsilon_x(k_n-1)+1},
\end{align*}
for sufficiently large $n$. Hence $\lim_{n\rightarrow \infty}F_n(x)=1$, and therefore $k_n(1-F_n(x)^{-1})\sim k_n(F_n(x)-1)$ as $n\rightarrow\infty$. Notice that $\lim_{n\rightarrow\infty}k_n(1-F_n(x)^{-1})=1-F(x)^{-1}$ since $\mathbf{B}_1^M(F_n)^{\Box \hspace{-.55em}\lor k_n}\xrightarrow{w}\mathbf{B}_1^M(F)$ as $n\rightarrow\infty$. Therefore we get
\begin{align*}
\lim_{n\rightarrow\infty}\mathbf{B}_1^M(F_n)^{\Box \hspace{-.55em}\lor k_n}(x)&=\lim_{n\rightarrow\infty} (k_n(1-F_n(x)^{-1})+1)\\
&=\lim_{n\rightarrow\infty} (k_n(F_n(x)-1)+1) = \lim_{n\rightarrow\infty}F_n^{\Box \hspace{-.55em}\lor k_n}(x).
\end{align*}
Next, we consider $x\in \mathcal{C}(F)\cap \{F=0\}$. It is clear that $\lim_{n\rightarrow\infty}\mathbf{B}_1^M(F_n)^{\Box \hspace{-.55em}\lor k_n}(x) = 0$ from our assumption. It also follows from our assumption that
\begin{align*}
\frac{F_n(x)}{k_n-(k_n-1)F_n(x)}<\frac{1}{2},
\end{align*}
for sufficiently large $n$. Therefore we have $F_n(x)<\frac{k_n}{k_n+1}$. This means that $k_nF_n(x)-(k_n-1) <\frac{1}{k_n+1}$. Hence $F_n^{\Box\hspace{-.55em}\lor k_n}(x)=(k_nF_n(x)-(k_n-1))_+\rightarrow 0$ as $n\rightarrow \infty$. 

Finally, we obtain the equation \eqref{key_thm7}, and therefore $F^{\Box \hspace{-.55em}\lor k_n}\xrightarrow{w} \mathbf{B}_1^M(F)$ as $n\rightarrow\infty$.
\end{proof}

\begin{remark}
We can obtain the above theorem even if we change $\Delta_+$ to $\Delta_+^{(0)}$. In this case, we can interpret that the map $\mathbf{B}_1^M$ makes a limit theorem for free regular max-infinitely divisible distributions from one of Boolean max-infinitely divisible distributions.
\end{remark}

\begin{remark}
In Section 6, we prove an equivalence of limit theorems for classical and Boolean max-infinitely divisible distributions by using the map $\mathcal{X}$. Moreover, \cite{BD} has already proved an implication from the classical max-limit theorem to the free max-limit theorem by using the map $\Lambda^{\lor}$. Combining these facts gives another proof of Theorem \ref{thm:limit}.
\end{remark}

Next we show the converse claim of Theorem \ref{thm:limit} under the special case.

\begin{theorem}\label{prop:converse}
Let $\{F_n\}_n$ be a sequence in $\Delta_+$ and $\{k_n\}_n$ a sequence of positive integers such that $k_1<k_2<\cdots$. If there exists $F\in\Delta_+$ with $F>0$ on $[0,\infty)$ such that $F_n^{\Box\hspace{-.55em}\lor k_n}\xrightarrow{w} F$, then $F_n^{\cup \hspace{-.52em}\lor k_n}\xrightarrow{w} G$ as $n\rightarrow \infty$, where
\begin{align*}
G(x):=\begin{cases}
\frac{1}{2-F(x)}, & x\ge 0\\
0, & x<0.
\end{cases}
\end{align*}
\end{theorem}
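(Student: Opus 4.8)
The plan is to prove the weak convergence directly and pointwise at continuity points, exploiting the explicit formulas $F_n^{\Box \hspace{-.55em}\lor k_n}(x)=(k_nF_n(x)-(k_n-1))_+$ and $F_n^{\cup \hspace{-.52em}\lor k_n}(x)=F_n(x)/(k_n-(k_n-1)F_n(x))$. The target $G=1/(2-F)$ is exactly the value obtained by inverting $\mathbf{B}_1^M$ on the region where $F>0$, since there $\mathbf{B}_1^M(G)=(2-1/G)_+=F$; thus the hypothesis $F>0$ on $[0,\infty)$ is precisely what makes this inversion single-valued (recall $\mathbf{B}_1^M$ is many-to-one on $\{F=0\}$). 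I would therefore establish $F_n^{\cup \hspace{-.52em}\lor k_n}(x)\to G(x)$ at every continuity point $x$ of $G$.

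First I would dispose of $x<0$: since $F_n\in\Delta_+$ we have $F_n(x)=0$, whence $F_n^{\cup \hspace{-.52em}\lor k_n}(x)=0=G(x)$. For a continuity point $x\ge0$ of $F$, the hypothesis $F(x)>0$ is the key driver. Because $(k_nF_n(x)-(k_n-1))_+\to F(x)>0$, the truncation must be asymptotically inactive (otherwise a subsequence would force the limit to be $0$), so $k_nF_n(x)-(k_n-1)\to F(x)$. Rewriting this as $k_n(1-F_n(x))\to 1-F(x)$ and using $k_n\to\infty$ yields $F_n(x)\to1$. Writing the denominator as $k_n-(k_n-1)F_n(x)=k_n(1-F_n(x))+F_n(x)$, I then obtain
\begin{align*}
F_n^{\cup \hspace{-.52em}\lor k_n}(x)=\frac{F_n(x)}{k_n(1-F_n(x))+F_n(x)}\longrightarrow \frac{1}{(1-F(x))+1}=\frac{1}{2-F(x)}=G(x).
\end{align*}

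To conclude weak convergence it remains to match continuity points: since $s\mapsto 1/(2-s)$ is a homeomorphism of $[0,1]$ onto $[1/2,1]$, the function $G=1/(2-F)$ is continuous at a point $x\ge0$ exactly when $F$ is, so the pointwise limit above holds at every continuity point of $G$, which suffices. I would also record that $G\in\Delta_+$ (it is increasing and right-continuous, with $G(x)\to1$ as $x\to\infty$ because $F(x)\to1$). The step I expect to require the most care is the role of the hypothesis $F>0$: it is exactly what guarantees both that the positive part on the free side is eventually irrelevant, legitimizing the relation $k_n(1-F_n(x))\to 1-F(x)$, and that $2-F(x)>0$, legitimizing the final quotient. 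Without it the inverse of $\mathbf{B}_1^M$ would fail to determine $G$, so all the attention should go to justifying the inactivity of the truncation and the passage $F_n(x)\to1$.
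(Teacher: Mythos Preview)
Your proposal is correct and follows essentially the same route as the paper: dispose of $x<0$ trivially, use $F(x)>0$ at continuity points $x\ge0$ to deactivate the positive part so that $k_n(1-F_n(x))\to 1-F(x)$ and $F_n(x)\to1$, and then rewrite the Boolean denominator as $k_n(1-F_n(x))+F_n(x)$ to pass to the limit $1/(2-F(x))$. Your discussion of matching continuity points of $G$ with those of $F$ via the homeomorphism $s\mapsto 1/(2-s)$ is in fact a detail the paper leaves implicit.
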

\begin{proof}
The claim is clear in the case of $x<0$. For any $x\in \mathcal{C}(F)\cap \{F>0\}=\mathcal{C}(F) \cap [0,\infty)$, there is $\epsilon_x>0$ such that $k_nF_n(x)-(k_n-1)>\epsilon_x$ for sufficiently large $n$, and therefore $\lim_{n\rightarrow\infty}F_n(x)= 1$. Then we have 
\begin{align*}
\lim_{n\rightarrow\infty} F_n^{\cup \hspace{-.52em}\lor k_n}(x)&=\lim_{n\rightarrow\infty}\frac{F_n(x)}{1+F_n(x)-(k_nF_n(x)-(k_n-1))}= \frac{1}{2-F(x)}.
\end{align*}
Hence $F_n^{\cup \hspace{-.52em}\lor k_n}\xrightarrow{w} G$ as $n\rightarrow \infty$.
\end{proof}

We give two remarks for Theorem \ref{thm:limit} and Theorem \ref{prop:converse}.

\begin{remark}\label{rem: free-Boolean}
(1) In the setting of Theorem \ref{thm:limit}, note that the convergence of $F_n^{\Box \hspace{-.55em}\lor k_n}$ to $\mathbf{B}_1^M(F)$ (for some $F\in\Delta_+$) does not necessarily imply the convergence of $F_n^{\cup \hspace{-.50em}\lor k_n}$ to $F$. 

We give an example as follows. Consider $\alpha>0$. Let $F_n$ be the following distribution function:
\begin{align*}
F_n(x):=\begin{cases}
\mathbf{P}_\alpha(n^{-1/\alpha}x), & x\ge 1\\
0, & x<1.
\end{cases}
\end{align*}
Then $F_n^{\Box \hspace{-.55em}\lor n}\xrightarrow{w}\mathbf{P}_\alpha=\mathbf{B}_1^M(\mathbf{D}_\alpha)$ as $n\rightarrow\infty$. However, 
\begin{align*}
F_n^{\cup \hspace{-.50em}\lor n}(x)\xrightarrow{n\rightarrow\infty} &\begin{cases}
\mathbf{D}_\alpha(x), & x\ge 1\\
0, & x<1
\end{cases} \neq \mathbf{D}_\alpha(x).
\end{align*}

(2) In Theorem \ref{prop:converse}, we assumed that $F>0$ on $[0,\infty)$ to prove the converse claim of Theorem \ref{thm:limit}. However we expect that the converse claim of Theorem \ref{thm:limit} holds without the special assumption of $F$. 

For example, we consider $F\in\Delta_+$ with $\alpha(F)>0$. Define the following sequences of distribution functions in $\Delta_+$:
\begin{align*}
F_{1,n}(x):&=\begin{cases}
1-\frac{1}{n}+\frac{F}{n}, & x\ge \alpha(F)\\
\frac{1}{\alpha(F)} \left(1-\frac{1}{n}\right)x, & 0\le x <\alpha(F),\\
0, & x<0,
\end{cases}\\
F_{2,n}(x):&=\begin{cases}
1-\frac{1}{n}+\frac{F}{n}, & x\ge \alpha(F)\\
1-\frac{1}{n}, & 0\le x <\alpha(F),\\
0, & x<0,
\end{cases}
\end{align*}
Then $F_{i,n}^{\Box \hspace{-.55em}\lor n}\xrightarrow{w} F$ as $n\rightarrow\infty$ for $i=1,2,3$. Moreover, we have that 
\begin{align*}
F_{1,n}^{\cup \hspace{-.50em}\lor n} &\xrightarrow{w}\begin{cases}
\frac{1}{2-F}, & x\ge \alpha(F)\\
0, &  x< \alpha(F),
\end{cases}\\
F_{2,n}^{\cup \hspace{-.50em}\lor n} &\xrightarrow{w}\begin{cases}
\frac{1}{2-F}, & x\ge \alpha(F)\\
\frac{1}{2}, & 0\le x< \alpha(F),\\
0, & x<0,
\end{cases}
\end{align*}
as $n\rightarrow\infty$. Therefore the convergence of $F_{i,n}^{\Box \hspace{-.55em} \lor n}$ to $F$ ($i=1,2$) implies the convergences of $F_{i,n}^{\cup \hspace{-.50em} \lor n}$ to some distribution function.

According to the above three examples, for any $x\in \mathcal{C}(F) \cap \{F=0\}$, the convergence of the sequence of $F_{n}^{\cup \hspace{-.50em}\lor k_n}(x)$ depends on a situation of $F_n$.\end{remark}

We formulate the conjecture in Remark \ref{rem: free-Boolean} (2) as follows.

\begin{conjecture}
Let $\{F_n\}_n$ be a sequence in $\Delta_+$ and $\{k_n\}_n$ a sequence of positive integers such that $k_1<k_2<\cdots$. If there exists $F\in\Delta_+$ such that $F_n^{\Box\hspace{-.55em}\lor k_n} \xrightarrow{w} F$, then $F_n^{\cup\hspace{-.52em}\lor k_n}\xrightarrow{w} G$ as $n\rightarrow \infty$ for some $G\in \Delta_+$. In particular, we have $G=\frac{1}{2-F}$ on $\{F>0\}$.
\end{conjecture}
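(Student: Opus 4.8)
The plan is to reduce the whole question to the behaviour of the single scalar $t_n(x):=k_n(1-F_n(x))$, because both max-convolution powers are explicit functions of it. Using $F^{\Box \hspace{-.55em}\lor n}=(nF-(n-1))_+$ and $F^{\cup \hspace{-.50em}\lor n}=F/(n-(n-1)F)$, a direct substitution gives, for every $x$,
\[
F_n^{\Box \hspace{-.55em}\lor k_n}(x)=\bigl(1-t_n(x)\bigr)_+,\qquad F_n^{\cup \hspace{-.50em}\lor k_n}(x)=\frac{1-t_n(x)/k_n}{1+t_n(x)(1-1/k_n)}.
\]
The right-hand Boolean expression is a \emph{strictly decreasing} function of $t_n(x)$ (its derivative in $t_n$ equals $-(1+t_n(1-1/k_n))^{-2}$), behaving like $1/(1+t_n(x))$ when $t_n(x)$ stays bounded and tending to $0$ when $t_n(x)\to\infty$. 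Thus the hypothesis constrains $(1-t_n(x))_+$, while the conclusion concerns this monotone transform of the same $t_n(x)$, and the first step is simply to record these two formulas.

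On the set $\{F>0\}$ the argument is exactly the one already used in Theorem \ref{prop:converse}: if $F(x)>0$ then $(1-t_n(x))_+\to F(x)$ forces, for large $n$, $t_n(x)\to 1-F(x)\in[0,1)$; since then $t_n(x)/k_n\to0$, the Boolean power converges to $1/(2-F(x))$. This disposes of $\{F>0\}$ and already pins the limit there to $G=\tfrac{1}{2-F}$, matching the prediction of the conjecture. Everything therefore reduces to the complementary region $\{F=0\}\cap[0,\infty)$, i.e. the interval $[0,\alpha(F)]$. For that region I would exploit monotonicity: each $F_n^{\cup \hspace{-.50em}\lor k_n}$ is a distribution function, so by Helly's selection theorem every subsequence has a further subsequence converging weakly to some monotone $G$, and by the previous step each such limit equals $1/(2-F)$ on $\{F>0\}$. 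It would then suffice to show that all subsequential limits agree on $[0,\alpha(F)]$; equivalently, that the transformed sequence cannot exhibit two distinct subsequential limits at a common continuity point of the candidate $G$.

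The hard part---and, I believe, the reason this remains only a conjecture---is precisely this last uniqueness claim. On $\{F=0\}$ the free hypothesis delivers only $\liminf_n t_n(x)\ge1$ with no upper bound, and because the Boolean power is strictly decreasing in $t_n$, any oscillation of $t_n(x)$ between a finite level $\ge1$ and $+\infty$ turns into oscillation of $F_n^{\cup \hspace{-.50em}\lor k_n}(x)$ between a value in $(0,\tfrac12]$ and $0$, all while $F_n^{\Box \hspace{-.55em}\lor k_n}(x)$ stays pinned at $0=F(x)$. To close the proof one would have to extract from the distribution-function structure (monotonicity of $F_n$ in $x$ together with the global normalization) a rigidity forcing $t_n(\cdot)$ to stabilize on $\{F=0\}$. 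I do not see such a mechanism in general: by placing atoms of different masses at $0$ and at $\alpha(F)$ along even and odd indices one can arrange $F_n^{\Box \hspace{-.55em}\lor k_n}\to F$ while $F_n^{\cup \hspace{-.50em}\lor k_n}$ oscillates on $(0,\alpha(F))$. Consequently I expect that a genuine proof requires an additional hypothesis (as in Theorem \ref{prop:converse}, or a monotonicity-in-$n$ assumption on the $F_n$), and that the obstacle---obtaining two-sided control of $t_n(x)$ on $\{F=0\}$---is the crux on which the conjecture stands or falls.
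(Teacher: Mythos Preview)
The statement is a \emph{Conjecture} in the paper, not a theorem; the paper offers no proof for you to be compared against. Your write-up is therefore not a failed proof but an accurate diagnosis: on $\{F>0\}$ the argument of Theorem~\ref{prop:converse} goes through verbatim and pins the limit to $G=\tfrac{1}{2-F}$, while on $\{F=0\}\cap[0,\infty)$ the free hypothesis yields only $\liminf_n t_n(x)\ge 1$, which is not enough to force convergence of the Boolean powers.

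Your suspicion that the conjecture may actually be false is correct, and the paper's own Remark~\ref{rem: free-Boolean}(2) supplies the pieces. Fix any $F\in\Delta_+$ with $\alpha(F)>0$, take $k_n=n$, and set $F_n:=F_{1,n}$ for $n$ even and $F_n:=F_{2,n}$ for $n$ odd, where $F_{1,n},F_{2,n}$ are the two sequences defined in that remark. A direct check (carried out there) gives $F_n^{\Box\hspace{-.55em}\lor n}=F$ identically for every $n$, so the free hypothesis holds trivially. But for each fixed $x\in(0,\alpha(F))$ the Boolean powers satisfy $F_{1,n}^{\cup\hspace{-.50em}\lor n}(x)\to 0$ along even indices and $F_{2,n}^{\cup\hspace{-.50em}\lor n}(x)\to\tfrac12$ along odd indices (both limits are computed in the remark). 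Since any candidate $G\in\Delta_+$ has at most countably many discontinuities, it has continuity points in $(0,\alpha(F))$, and at such points $F_n^{\cup\hspace{-.50em}\lor n}$ fails to converge. Hence no weak limit $G\in\Delta_+$ exists, and the conjecture as stated is false; your identification of the obstacle and your counterexample sketch were on target.
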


\section{Limit theorems for classical and Boolean max-infinitely divisible distributions}
\label{sec:13}

In this section, we give a relation of limit theorems for classical and Boolean max-infinitely divisible distributions. According to \cite[Theorem 4.1]{VV18} (or Section 4.3), we have $\mathcal{X}(\mathbf{D}_\alpha)=\Phi_\alpha$ and $\mathcal{X}^{-1}(\Phi_\alpha)=\mathbf{D}_\alpha$ for any $\alpha>0$, where $\mathcal{X}$ and $\mathcal{X}^{-1}$ were defined in Section 4.1.

The isomorphism $\mathcal{X}$ is an important key to discuss in this section. We give the following theorem.

\begin{theorem}\label{thm:boolean-classical}
Consider a sequence $\{F_n\}_n$ in $\Delta_+$ and a sequence $\{k_n\}_n$ of positive integers with $k_1<k_2<\cdots$. The following conditions are equivalent.
\begin{enumerate}[(1)]
\item There exists $F\in\Delta_+$ such that $F_n^{ \lor k_n}\xrightarrow{w} F$ as $n\rightarrow \infty$;
\item There exists $G\in\Delta_+$ such that $F_n^{\cup \hspace{-.52em}\lor k_n}\xrightarrow{w} G$ as $n\rightarrow\infty$.
\end{enumerate}
If either (1) or (2) holds, then we have $\mathcal{X}(G)=F$ and $\mathcal{X}^{-1}(F)=G$.
\end{theorem}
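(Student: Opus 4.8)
The plan is to work pointwise at continuity points and exploit the explicit formula $F_n^{\cup \hspace{-.52em}\lor k_n}=F_n/(k_n-(k_n-1)F_n)$ together with the fact that classical max-convolution is just the product, so $F_n^{\lor k_n}=F_n^{k_n}$. The whole argument rests on the observation that, since $k_n\to\infty$, a nondegenerate limit at a point $x$ forces $F_n(x)\to 1$, placing us in the regime where $-\log F_n(x)\sim 1-F_n(x)$; in that regime both powers are controlled by the single quantity $k_n(1-F_n(x))$, which is exactly what lets the two limits be matched through $\mathcal{X}$.

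First I would prove $(1)\Rightarrow(2)$. Fix $x\in\mathcal{C}(F)$ with $F(x)>0$. From $F_n(x)^{k_n}\to F(x)\in(0,1]$ and $k_n\to\infty$ I obtain $F_n(x)\to 1$, and then taking logarithms and using $\log F_n(x)\sim -(1-F_n(x))$ yields $k_n(1-F_n(x))\to -\log F(x)=:c_x$. Substituting into the Boolean formula, written as $F_n^{\cup \hspace{-.52em}\lor k_n}(x)=F_n(x)/(1+(k_n-1)(1-F_n(x)))$, gives the limit $1/(1+c_x)=1/(1-\log F(x))=\mathcal{X}^{-1}(F(x))$. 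Thus $G:=\mathcal{X}^{-1}(F)$ is the candidate limit, and since $\mathcal{X}^{-1}$ is a strictly increasing continuous bijection of $[0,1]$, we have $G\in\Delta_+$ and $\mathcal{C}(G)=\mathcal{C}(F)$, so the pointwise convergence I establish is at the correct set of points. The direction $(2)\Rightarrow(1)$ is entirely symmetric: at $x\in\mathcal{C}(G)$ with $G(x)>0$, convergence of $F_n^{\cup \hspace{-.52em}\lor k_n}(x)$ again forces $F_n(x)\to 1$ (otherwise the denominator blows up and the ratio tends to $0$) and now pins down $k_n(1-F_n(x))\to (1-G(x))/G(x)$, whence $F_n(x)^{k_n}\to\exp(1-1/G(x))=\mathcal{X}(G(x))$; this simultaneously identifies the classical limit as $F=\mathcal{X}(G)$.

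The step I expect to be the main obstacle is the continuity points where the limit vanishes, since there the first-order expansion is unavailable and I cannot simply read off $k_n(1-F_n(x))$. Here I would argue directly with the elementary inequalities $\log t\le t-1$ and $-\log t\ge 1-t$ on $(0,1]$. In $(2)\Rightarrow(1)$ at a point with $G(x)=0$, the Boolean ratio tending to $0$ forces $k_n(1-F_n(x))\to\infty$, and then $k_n\log F_n(x)\le -k_n(1-F_n(x))\to-\infty$ gives $F_n(x)^{k_n}\to 0$. In $(1)\Rightarrow(2)$ at a point with $F(x)=0$, a short contradiction argument works: if the Boolean ratio stayed bounded below along a subsequence, then $(k_n-1)(1-F_n(x))$ would be bounded there, forcing $F_n(x)\to 1$ with $k_n(1-F_n(x))$ bounded, and hence $F_n(x)^{k_n}$ bounded away from $0$, contradicting $(1)$. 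Finally I would assemble these pointwise statements: convergence at every continuity point of the common limit together with the trivial agreement on $(-\infty,0)$ yields weak convergence, and the computations above give exactly $\mathcal{X}(G)=F$ and $\mathcal{X}^{-1}(F)=G$.
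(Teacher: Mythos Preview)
Your proposal is correct and follows essentially the same route as the paper: a pointwise analysis at continuity points, split into the cases $F(x)>0$ (resp.\ $G(x)>0$), where $F_n(x)\to 1$ and the asymptotic $-\log F_n(x)\sim 1-F_n(x)$ identifies the limit through $k_n(1-F_n(x))$, and the zero case, handled by the same contradiction/inequality arguments you describe. The only cosmetic difference is that the paper routes the positive case through the auxiliary sequences $\mathcal{X}^{-1}(F_n)^{\cup\hspace{-.52em}\lor k_n}$ and $\mathcal{X}(F_n)^{\lor k_n}$ and uses $k_n(1-F_n(x)^{-1})$ instead of your $k_n(1-F_n(x))$, but these are equivalent once $F_n(x)\to 1$.
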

\begin{proof}
Firstly we show the implication (1) $\Rightarrow $ (2). Assume that $F_n^{\lor k_n}\xrightarrow{w} F$ as $n\rightarrow \infty$. Since $\mathcal{X}^{-1}(F_n)^{\cup \hspace{-.52em} \lor k_n}=\mathcal{X}^{-1}(F_n^{k_n})\xrightarrow{w} \mathcal{X}^{-1}(F)$ as $n\rightarrow\infty$, it suffices to show that 
\begin{align}\label{key_thm9_1}
\lim_{n\rightarrow\infty }\mathcal{X}^{-1}(F_n)^{\cup \hspace{-.50em} \lor k_n}(x)=\lim_{n\rightarrow\infty}F_n^{\cup \hspace{-.50em} \lor k_n}(x) \qquad  x\in \mathcal{C}(F).
\end{align}
Suppose $x\in\mathcal{C}(F)\cap \{F>0\}$. Then $F_n(x)>0$ for sufficiently large $n$ by the assumption (1). Moreover, $F_n(x)\rightarrow 1$ as $n\rightarrow \infty$. Hence we have $k_n(1-F_n^{-1}(x))\sim k_n\log F_n(x)$ as $n\rightarrow\infty$. Notice that $\lim_{n\rightarrow\infty}k_n\log F_n(x)=\lim_{n\rightarrow\infty}\log F_n(x)^{k_n}= \log F(x)$ from the assumption (1). This implies that
\begin{align*}
\lim_{n\rightarrow\infty}\mathcal{X}^{-1}(F_n)^{\cup \hspace{-.50em} \lor k_n}(x)&=\lim_{n\rightarrow\infty} \frac{1}{1-k_n\log F_n(x)}\\
&=\lim_{n\rightarrow\infty} \frac{1}{1-k_n(1-F_n(x)^{-1})}=\lim_{n\rightarrow\infty} F_n^{\cup \hspace{-.50em} \lor k_n}(x).
\end{align*}
Suppose that $x\in\mathcal{C}(F)\cap \{F=0\}$. It is clear that $\lim_{n\rightarrow\infty}\mathcal{X}^{-1}(F_n)^{\cup \hspace{-.50em} \lor k_n}(x)=0$ by the assumption (1). Assume that $F_n^{\cup \hspace{-.50em} \lor k_n} (x)$ does not converge to $0$ as $n\rightarrow\infty$. If necessary, passing to a subsequence, we can find $\delta>0$ such that
\begin{align*}
F_n^{\cup \hspace{-.50em} \lor k_n}(x)=\frac{F_n(x)}{k_n-(k_n-1)F_n(x)}>\delta,
\hspace{1mm} \text{ equivalently, } \hspace{1mm}
F_n(x)>\frac{\delta k_n}{(1-\delta)+\delta k_n},
\end{align*}
for sufficiently large $n$. Therefore we have
\begin{align*}
F_n^{\lor k_n}(x)&>\left\{ \frac{\delta k_n}{(1-\delta)+\delta k_n}\right\}^{k_n}=\left\{1+\left(\frac{1-\delta}{\delta}\right)\frac{1}{k_n}\right\}^{-k_n}\xrightarrow{n\rightarrow\infty} e^{-\frac{1-\delta}{\delta}}>0.
\end{align*}
This is a contradiction for that $\lim_{n\rightarrow\infty }F_n^{\lor k_n}(x)=0$. Hence we have $\lim_{n\rightarrow\infty}F_n^{\cup \hspace{-.50em} \lor k_n}(x)=0$.
Finally, we obtain the equation \eqref{key_thm9_1}, and therefore $F_n^{\cup \hspace{-.52em} \lor k_n}\xrightarrow{w} \mathcal{X}^{-1}(F)$ as $n\rightarrow\infty$.

Next we show the implication (2) $\Rightarrow$ (1). Assume that $F_n^{\cup \hspace{-.52em}\lor k_n}\xrightarrow{w} G$ as $n\rightarrow\infty$. Since $\mathcal{X}(F_n)^{\lor k_n}=\mathcal{X}(F_n^{\cup \hspace{-.52em}\lor k_n})\xrightarrow{w} \mathcal{X}(G)$, it suffices to show that
\begin{align}\label{key_thm9_2}
\lim_{n\rightarrow\infty}\mathcal{X}(F_n)^{\lor k_n}(x) =\lim_{n\rightarrow\infty} F_n^{\lor k_n}(x), \qquad x\in \mathcal{C}(G).
\end{align}
Consider $x\in \mathcal{C}(G)\cap \{G>0\}$.  In the proof of Theorem \ref{thm:limit}, we know that $F_n(x)>0$ for sufficiently large $n$ and $\lim_{n\rightarrow\infty}F_n(x)=1$. It is easy to see that $\lim_{n\rightarrow\infty}k_n(1-F_n(x)^{-1})=1-G(x)^{-1}$ from the assumption (2). Since $k_n(1-F_n^{-1}(x))\sim k_n\log F_n(x)$ for sufficiently large $n$, we obtain
\begin{align*}
\lim_{n\rightarrow\infty}\mathcal{X}(F_n)^{\lor k_n}(x)&=\lim_{n\rightarrow\infty}\exp(k_n(1-F_n(x)^{-1}))\\
&=\lim_{n\rightarrow\infty}\exp (k_n\log F_n(x))=\lim_{n\rightarrow\infty}F_n^{\lor k_n}(x).
\end{align*}
Suppose that $x\in \mathcal{C}(F)\cap \{G=0\}$. We obtain $\lim_{n\rightarrow\infty}\mathcal{X}(F_n)^{\lor k_n}= 0$ from the assumption (2). Moreover it also follows that for arbitrary $\epsilon>0$, we have
\begin{align*}
F_n^{\cup \hspace{-.50em} \lor k_n}(x)=\frac{F_n(x)}{k_n-(k_n-1)F_n(x)}<\epsilon, \hspace{1mm}\text{ equivalently, }\hspace{1mm}
F_n(x)<\frac{k_n\epsilon}{(1-\epsilon)+\epsilon k_n},
\end{align*}
and therefore $F_n^{\lor k_n}(x) \le e^{-\frac{1-\epsilon}{\epsilon}}$ for sufficiently large $n$. Since $\epsilon>0$ is arbitrary, we have $\lim_{n\rightarrow\infty}F_n^{\lor k_n}(x)= 0$. Finally, we get the equation \eqref{key_thm9_2}, and therefore $F_n^{\lor k_n}\xrightarrow{w} \mathcal{X}(G)$ as $n\rightarrow\infty$.

Thus, the equivalence of two conditions holds and $\mathcal{X}(G)=F$ and $\mathcal{X}^{-1}(F)=G$.
\end{proof}

According to the above consideration, it is appropriate that the map $\mathcal{X}$ is said the {\it Boolean-classical max-Bercovici-Pata bijection}. 

Applying discussions in this section and previous section (Section 5), we obtain a relation of limit theorems for classical and freely max-infinitely divisible distributions. However, the following corollary has already been proved in \cite{BD}.

\begin{corollary}
Consider a sequence $\{F_n\}_n$ in $\Delta_+$ and a sequence $\{k_n\}_n$ of positive integers with $k_1<k_2<\cdots$. If there exists $F\in \Delta_+$ such that $F_n^{\lor k_n}\xrightarrow{w} F$, then $F_n^{\Box \hspace{-.55em}\lor k_n}\xrightarrow{w} \Lambda^{\lor}(F)$ as $n\rightarrow\infty$.
\end{corollary}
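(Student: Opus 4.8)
The plan is to obtain the corollary as a formal composition of the two main limit theorems already proved, with no new analysis required. First I would pass from the classical hypothesis to a Boolean statement using Theorem \ref{thm:boolean-classical}. By assumption $F_n^{\lor k_n}\xrightarrow{w} F$ for some $F\in\Delta_+$, so condition (1) of that theorem holds; its implication $(1)\Rightarrow(2)$ then yields a $G\in\Delta_+$ with $F_n^{\cup \hspace{-.52em}\lor k_n}\xrightarrow{w} G$, together with the explicit identification $G=\mathcal{X}^{-1}(F)$ (equivalently $\mathcal{X}(G)=F$).

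Next I would feed this Boolean convergence into Theorem \ref{thm:limit}. Since $F_n^{\cup \hspace{-.52em}\lor k_n}\xrightarrow{w} G$ with $G\in\Delta_+$, that theorem gives at once $F_n^{\Box \hspace{-.55em}\lor k_n}\xrightarrow{w} \mathbf{B}_1^M(G)$ as $n\to\infty$. It then remains only to identify the limit $\mathbf{B}_1^M(G)$ with $\Lambda^\lor(F)$. Using Proposition \ref{prop:B_1}(1), namely $\mathbf{B}_1^M=\Lambda^\lor\circ\mathcal{X}$, together with $G=\mathcal{X}^{-1}(F)$ and the fact that $\mathcal{X}$ and $\mathcal{X}^{-1}$ are mutually inverse bijections on $\Delta_+$, I would compute
\[
\mathbf{B}_1^M(G)=\Lambda^\lor\big(\mathcal{X}(G)\big)=\Lambda^\lor\big(\mathcal{X}(\mathcal{X}^{-1}(F))\big)=\Lambda^\lor(F).
\]
Combining the two displays gives $F_n^{\Box \hspace{-.55em}\lor k_n}\xrightarrow{w}\Lambda^\lor(F)$, which is the assertion.

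I do not expect any genuine obstacle here: the corollary is a purely formal consequence of Theorems \ref{thm:boolean-classical} and \ref{thm:limit}, where all the real work (the tail estimates and the careful treatment of the continuity points in $\{F>0\}$ and $\{F=0\}$) has already been carried out. The only point needing verification is the compatibility of the transforms, that is, $\mathbf{B}_1^M\circ\mathcal{X}^{-1}=\Lambda^\lor$ on $\Delta_+$, and this is immediate from Proposition \ref{prop:B_1}(1). Thus the entire proof reduces to applying the two theorems in sequence and simplifying the composition of $\Lambda^\lor$, $\mathcal{X}$, and $\mathcal{X}^{-1}$.
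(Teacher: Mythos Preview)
Your proposal is correct and follows essentially the same route as the paper: apply Theorem \ref{thm:boolean-classical} to pass from the classical hypothesis to $F_n^{\cup \hspace{-.52em}\lor k_n}\xrightarrow{w}\mathcal{X}^{-1}(F)$, then apply Theorem \ref{thm:limit} and Proposition \ref{prop:B_1}(1) to conclude $F_n^{\Box \hspace{-.55em}\lor k_n}\xrightarrow{w}\mathbf{B}_1^M(\mathcal{X}^{-1}(F))=\Lambda^\lor(F)$.
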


\begin{proof}
Assume that $F_n^{\lor k_n}\xrightarrow{w} F$ as $n\rightarrow \infty$. By Theorem \ref{thm:boolean-classical}, the assumption is equivalent to $F_n^{\cup \hspace{-.52em} \lor k_n}\xrightarrow{w} \mathcal{X}^{-1}(F)$ as $n\rightarrow\infty$. Moreover, by Proposition \ref{prop:B_1} and Theorem \ref{thm:limit}, the above condition implies that $F_n^{\Box\hspace{-.55em}\lor k_n}\xrightarrow{w} \mathbf{B}_1^M(\mathcal{X}^{-1}(F))= \Lambda^{\lor}\circ\mathcal{X}(\mathcal{X}^{-1}(F))=\Lambda^{\lor}(F)$ as $n\rightarrow\infty$.
\end{proof}

Moreover, the converse claim of the above corollary holds under the special case.

\begin{corollary}\label{cor:free-classical}
Consider a sequence $\{F_n\}_n$ in $\Delta_+$ and a sequence $\{k_n\}_n$ of positive integers with $k_1<k_2<\cdots$. If there exists $F\in \Delta_+$ with $F>0$ on $[0,\infty)$ such that $F_n^{\Box \hspace{-.55em}\lor k_n}\xrightarrow{w} F$, then $F_n^{\lor k_n}\xrightarrow{w} \Pi_{1,F}^{\lor}$ as $n\rightarrow\infty$.
\end{corollary}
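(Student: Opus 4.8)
The plan is to obtain the classical max-limit theorem from the free one by routing through the Boolean max-convolution, using the two transfer results already established. Concretely, I would first pass from free convergence to Boolean convergence via Theorem~\ref{prop:converse}, then from Boolean convergence to classical convergence via the implication (2)$\Rightarrow$(1) of Theorem~\ref{thm:boolean-classical}, and finally identify the resulting limit with $\Pi_{1,F}^{\lor}$ by a direct computation with the isomorphism $\mathcal{X}$.

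First I would apply Theorem~\ref{prop:converse}: since $F_n^{\Box \hspace{-.55em}\lor k_n}\xrightarrow{w} F$ with $F>0$ on $[0,\infty)$, which is exactly the hypothesis of that theorem, we get $F_n^{\cup \hspace{-.52em}\lor k_n}\xrightarrow{w} G$, where
\[
G(x)=\frac{1}{2-F(x)}\quad (x\ge 0),\qquad G(x)=0\quad (x<0).
\]
Next, with this Boolean convergence in hand, the implication (2)$\Rightarrow$(1) of Theorem~\ref{thm:boolean-classical} immediately yields $F_n^{\lor k_n}\xrightarrow{w}\mathcal{X}(G)$ as $n\to\infty$.

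It then remains only to check that $\mathcal{X}(G)=\Pi_{1,F}^{\lor}$. For $x\ge 0$ we have $G(x)^{-1}=2-F(x)$, so the defining formula $\mathcal{X}(u)=\exp(1-u^{-1})$ gives
\[
\mathcal{X}(G(x))=\exp\bigl(1-(2-F(x))\bigr)=\exp\bigl(-(1-F(x))\bigr)=\Pi_{1,F}^{\lor}(x),
\]
while for $x<0$ both sides vanish because $\mathcal{X}(0)=0$. I do not expect a genuine obstacle here: the corollary is a formal composition of two previously proved limit-transfer theorems plus the elementary evaluation of $\mathcal{X}\bigl(1/(2-F)\bigr)$. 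The only point requiring attention is the role of the hypothesis $F>0$ on $[0,\infty)$, which is needed both so that Theorem~\ref{prop:converse} is applicable and so that $G=1/(2-F)$ is strictly positive, guaranteeing that $\mathcal{X}(G)$ falls on the nondegenerate exponential branch rather than collapsing to $0$.
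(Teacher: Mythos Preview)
Your proposal is correct and follows essentially the same route as the paper: apply Theorem~\ref{prop:converse} to pass from free to Boolean convergence, then Theorem~\ref{thm:boolean-classical} to pass from Boolean to classical convergence, and identify the limit as $\mathcal{X}(G)=\Pi_{1,F}^{\lor}$. Your explicit verification of $\mathcal{X}\bigl(1/(2-F)\bigr)=\exp(-(1-F))$ is a nice addition that the paper leaves implicit.
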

\begin{proof}
Assume that $F_n^{\Box \hspace{-.55em}\lor k_n}\xrightarrow{w} F$ as $n\rightarrow\infty$, where $F\in\Delta_+$ with $F>0$ on $[0,\infty)$. By Theorem \ref{prop:converse}, we have $F_n^{\cup \hspace{-.52em}\lor k_n}\xrightarrow{w} G \in\Delta_+$ as $n\rightarrow\infty$, where $G(x)=\frac{1}{2-F(x)}$ for $x\in[0,\infty)$ and $G(x)=0$ for $x\in (-\infty,0)$. By Theorem \ref{thm:boolean-classical}, for we have $F_n^{\lor k_n}\xrightarrow{w} \mathcal{X}\left(G\right)=\Pi_{1,F}^{\lor}$ as $n\rightarrow\infty$.
\end{proof}

For example, if $F=\Pi_{\lambda, G}^{\Box \hspace{-.55em}\lor}$ for some $0\le\lambda<1$ and a distribution function $G$ on $\mathbb{R}$ in Corollary \ref{cor:free-classical}, then we get $\Pi_{1,F}^{\lor}=\Pi_{\lambda,G}^{\lor}$. 

From the same reason of Remark \ref{rem: free-Boolean} (1), in general, the convergence of $F_n^{\Box \hspace{-.55em} \lor k_n}$ to $\Lambda^{\lor}(F)$ (for some $F\in\Delta_+$) does not necessarily imply the convergence of $F_n^{\lor k_n}$ to $F$.

\vspace{0.6cm}
\hspace{-5.5mm}{\it Yuki Ueda\\
Department of Mathematics, Hokkaido University,\\
Kita 10, Nishi 8, Kita-Ku, Sapporo, Hokkaido, 060-0810, Japan\\
email: yuuki1114@math.sci.hokudai.ac.jp}


\end{document}